\documentclass[reqno]{amsart}
\usepackage[utf8]{inputenc}
\usepackage[english]{babel}
\usepackage[margin=95pt]{geometry}

\usepackage{amsmath,amssymb,amsthm,mathtools}   
\usepackage{bbm}
\usepackage{quiver}                      
\usepackage{enumitem}

\usepackage{subfiles}

\usepackage[colorlinks=true,linkcolor=blue,citecolor=blue,urlcolor=blue]{hyperref}
\usepackage{xcolor}
\definecolor{Green4}{RGB}{0,139,0}
\definecolor{DodgerBlue4}{RGB}{16,78,139}

\usepackage{hyperref}
\hypersetup{
    colorlinks=true,
    linkcolor=Green4,
    filecolor=DodgerBlue4,
    urlcolor=DodgerBlue4,
    citecolor=Green4,
}

\theoremstyle{plain}
\newtheorem{theorem}{Theorem}[section]
\newtheorem{proposition}[theorem]{Proposition}
\newtheorem{mainthm}{Theorem}

\newtheorem{lemma}[theorem]{Lemma}
\newtheorem{corollary}[theorem]{Corollary}
\newtheorem{maincorollary}{Corollary}

\theoremstyle{definition}
\newtheorem{definition}[theorem]{Definition}
\newtheorem{notation}[theorem]{Notation}
\newtheorem{construction}[theorem]{Construction}

\newtheorem{hypothesis}[theorem]{Hypothesis}
\newtheorem{example}[theorem]{Example}

\theoremstyle{remark}
\newtheorem{remark}[theorem]{Remark}

\newcommand{\ra}{\rightarrow}
\newcommand{\lra}{\longrightarrow}

\newcommand{\rr}{\mathbb{R}}
\newcommand{\zz}{\mathbb{Z}}
\newcommand{\cc}{\mathbb{C}}

\newcommand{\C}{\mathcal{C}}
\newcommand{\D}{\mathcal{D}}
\newcommand{\M}{\mathcal{M}}
\newcommand{\Acal}{\mathcal{A}}
\newcommand{\Ncal}{\mathcal{N}}
\newcommand{\sset}{\mathcal{S}}
\newcommand{\spa}{s\mathcal{S}}

\newcommand{\set}{\mathbf{Set}}
\newcommand{\cat}{\mathbf{Cat}}
\renewcommand{\top}{\mathbf{Top}}
\newcommand{\vect}{\mathbf{Vect}}
\newcommand{\Alg}{\mathbf{Alg}}
\newcommand{\cob}{\mathbf{Bord}}
\newcommand{\CSS}{\mathcal{C\mkern-1mu S \mkern-2mu S}}
\newcommand{\CSSn}{\CSS_n}

\DeclareMathOperator{\Map}{Map}

\DeclareMathOperator{\Aut}{Aut}
\DeclareMathOperator{\Fun}{Fun}
\newcommand{\Funsm}{\Fun^{\otimes}}
\newcommand{\FunsmG}{\Fun^{\otimes}_{G}}
\newcommand{\Hom}{\mathrm{Hom}}

\DeclareMathOperator{\id}{id}
\newcommand{\colim}{\operatorname{colim}}
\newcommand{\triv}{\mathrm{triv}}  
\newcommand{\n}{{N}}                 
\renewcommand{\H}{\mathrm{Ho}}      
\newcommand{\point}{\ast}
\newcommand{\op}{\mathrm{op}}
\newcommand{\sop}{\mathrm{sop}}
\newcommand{\1}{\mathbb{1}}
\renewcommand{\coprod}{\bigsqcup}
\renewcommand{\1}{\mathbbm{1}}
\newcommand{\FR}{\mathrm{Fr}}
\newcommand{\Spin}{\mathrm{Spin}}
\newcommand{\Pin}{\mathrm{Pin}}
\newcommand{\Pic}{\mathrm{Pic}}
\newcommand{\pt}{\mathrm{pt}}
\newcommand{\Cl}{C\ell}

\usepackage{xargs}
\usepackage[colorinlistoftodos,prependcaption,textsize=small]{todonotes}
\newcommandx{\unsure}[2][1=]{\todo[linecolor=red,backgroundcolor=red!25,bordercolor=red,#1]{#2}}
\newcommandx{\change}[2][1=]{\todo[linecolor=blue,backgroundcolor=blue!25,bordercolor=blue,#1]{#2}}
\newcommandx{\info}[2][1=]{\todo[linecolor=OliveGreen,backgroundcolor=OliveGreen!25,bordercolor=OliveGreen,#1]{#2}}
\newcommandx{\improvement}[2][1=]{\todo[linecolor=DodgerBlue3,backgroundcolor=DodgerBlue3!25,bordercolor=DodgerBlue3,#1]{#2}}
\newcommandx{\thiswillnotshow}[2][1=]{\todo[disable,#1]{#2}}

\title{{Involutions on $(\infty,\MakeLowercase{n})$-categories of bordisms and reflection TQFT}s}

\author{Pedro Brunialti}
\address{University of Virginia, Charlottesville VA, USA}
\email{brunialti@virginia.edu}

\begin{document}

\begin{abstract}
We construct a strict involution, that is, a strict $\mathbb{Z}/2$-action, on the $(\infty,n)$-category $\mathbf{Bord}_n(H_n)$ of bordisms with $H_n$-structure, where $H_n$ is the symmetry type of a TQFT, using the complete $n$-fold Segal space model. Using this involution, we develop an appropriate definition of reflection TQFTs in this setting and show that they correspond to certain homotopy fixed point spaces. Finally, we apply the machinery to the example of super algebras, where, depending on the symmetry type, the homotopy fixed point data corresponds to either a stellar structure or a real structure on the algebra that the reflection theory assigns to the point. 
\end{abstract}

\maketitle

\tableofcontents

\pagebreak

\section{Introduction}

A topological quantum field theory (TQFT), as defined by Atiyah \cite{atiyah1988topological} and Segal \cite{Segal1988}, is a symmetric monoidal functor
\[Z : \cob_{(n,n-1)} \lra \C,\]
where $\cob_{(n,n-1)}$ is the $1$-category of bordisms and $\C$ is a symmetric monoidal category. A \emph{bordism} from a closed $(n-1)$-manifold $M_0$ to another $M_1$ is a compact $n$-manifold $W$ with $\partial W = M_0 \sqcup M_1$. Bordisms are the morphisms of $\cob_{(n,n-1)}$, with the closed $(n-1)$-manifolds being the objects. Usually  one works with $\C = \vect_\cc$, the category of complex vector spaces, so that if $M$ is a closed manifold, a TQFT assigns a number $Z(M) \in \cc$ to it. A theory coming from physics should assign to an $(n-1)$-dimensional manifold a Hilbert space, so something with hermitian structure, and the theory should interact well with such hermitian structure, and these are called \emph{unitary TQFTs}.

The unitarity condition of a TQFT  is not captured by Atiyah and Segal's axioms.  In \cite{FreedHopkinsReflection}, Freed and Hopkins deal with the unitarity condition  and develop a framework to make unitary expressible. More specifically, they show that unitarity can be expressed  by two components, a \emph{reflection structure} and a \emph{positivity condition}. With  this insight, they proceed in two steps. First, they show that $H_n \ra O(n)$ is a \emph{symmetry type} of a TQFT, then the $1$-category of bordisms of manifolds with $H_n$-structure $\cob_{(n,n-1)}(H_n)$  admits an involution, that is, a  symmetric monoidal functor $\beta : \cob_{(n,n-1)}(H_n) \ra \cob_{(n,n-1)}(H_n)$ such that $\beta^2\cong id$. Then, since the category of complex vector spaces $\vect_\cc$ also has an involution given by complex conjugation $\overline{(-)}: \vect_\cc \ra \vect_\cc$, they define a \emph{reflection TQFT} to be a symmetric monoidal functor 
\[
Z : \cob_{(n,n-1)}(H_n) \lra \vect_\cc
\]
equipped with  equivariance data
\[
Z(\beta X ) \xrightarrow{\cong} \overline{Z(X)}.
\]
If $Y$ is any $(n-1)$-dimensional closed $H_n$-manifold, the equivariance data allows for the construction of a non-degenerate pairing
\[
h_Y: Z(\beta Y) \otimes Z(Y) \lra \cc.
\]
Because of this pairing, they can define a reflection structure to be \emph{positive} if $h_Y$ is positive definite for all $Y$.

Freed and Hopkins define reflection positivity for \emph{non-extended theories}. A fully extended $n$-dimensional TQFT is a symmetric monoidal functor out of extended bordism categories, where  objects are points, $1$-morphisms are bordisms between objects, $2$-morphisms are bordisms between $1$-morphisms and so on up to $n$. This paper aims to expand the ideas of Freed and Hopkins to the extended case using concrete models for $(\infty,n)$-categories of bordisms developed by Calaque and Scheimbauer \cite{claudiaDamien} using $n$-fold complete Segal spaces. Here we only deal with reflection structures on extended TQFTs. In the extended case, positivity becomes data rather than a property, whose definition requires an appropriate notion of  \emph{$n$-dagger categories}, which are studied in \cite{daggerncats}.

The first step to generalize reflection structures for extended theories is to define an involution coming from the symmetry type of a TQFT on the extended category of bordisms as defined by Calaque and Scheimbauer, which is the content of the first main result. 

\begin{mainthm}[Theorem \ref{thm: equivZ2bord}]
The completion $\cob_n(H_n)$ has a strict $\zz/2$-action and is a $\zz/2$-symmetric monoidal $n$-fold complete Segal space.
\end{mainthm}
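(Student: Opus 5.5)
The plan is to build the involution first at the level of the $n$-fold Segal space $\mathrm{PBord}_n(H_n)$ of Calaque--Scheimbauer, before completion, and then pass it through the completion functor. Recall that a $(k_1,\dots,k_n)$-simplex of $\mathrm{PBord}_n(H_n)$ is a closed-and-bounded submanifold $M\subset V\times(0,1)^n$ (with $V$ a finite-dimensional vector space, after taking the colimit over $V\subset \rr^\infty$). It comes with an $H_n$-structure on the tangent bundle, pulled back along $M\to BO(n)$, and with cut data $t^i_0\le\dots\le t^i_{k_i}$ in each direction such that the usual submersivity conditions hold near the cuts. The symmetry type $H_n\to O(n)$ carries the reflection data used by Freed--Hopkins: an involution of $H_n$ covering conjugation by a reflection in $O(n)$, or equivalently a strict involution on the classifying space $BH_n$ over $BO(n)$.

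The action sends $(M,\text{$H_n$-structure},\text{cuts})$ to the same embedded manifold with the same cuts, but with the $H_n$-structure postcomposed by this strict involution of $BH_n$ (modelled as a point-set map of order exactly two). The geometry is deliberately left untouched. I will not reflect a coordinate of $(0,1)^n$, since that would reverse the simplicial direction and give an $\op$-type functor rather than an endomorphism.

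The checks are as follows.
\begin{enumerate}
\item The assignment is levelwise a map of spaces $\mathrm{PBord}_n(H_n)_{k_1,\dots,k_n}\to \mathrm{PBord}_n(H_n)_{k_1,\dots,k_n}$. It is continuous for the topologies on spaces of embedded manifolds with tangential structure, because postcomposition with a fixed map of $BH_n$ is continuous on the mapping spaces.
\item It commutes with all face and degeneracy maps. Those maps only alter cut data and restrict or cut $M$, and the involution is natural with respect to restriction.
\item Its square is the identity on the nose, because the involution of $BH_n$ is strict. This gives a strict $\zz/2$-action, that is, a functor $B\zz/2\to n$-fold Segal spaces.
\item It preserves the symmetric monoidal structure. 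Concretely, $\mathrm{PBord}_n(H_n)$ is a $\Gamma$-object built from disjoint unions in $V$, and postcomposition on structure maps commutes with the $\Gamma$-structure maps levelwise and strictly, yielding a $\zz/2$-equivariant $\Gamma$-object.
\end{enumerate}
The Segal conditions are unaffected, since the underlying object is unchanged.

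To pass to $\cob_n(H_n)$, I use that the completion is functorial, so that a strict action transports to the completion. I take completion to be a functorial fibrant replacement in the complete Segal space model structure, or Rezk's explicit completion. Applying it to the diagram $B\zz/2\to \mathrm{PBord}$ gives a diagram $B\zz/2\to \cob_n(H_n)$, and strictness persists because a functor preserves identities and composition. The symmetric monoidal structure is likewise obtained by completing levelwise in $\Gamma$. Since the involution is natural in $\Gamma$, the result is a $\zz/2$-symmetric monoidal $n$-fold complete Segal space, that is, a $\Gamma$-object in $\zz/2$-objects satisfying the Segal condition for $\Gamma$.

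The main obstacle is the point-set strictness. The involution of $H_n$ arising from a symmetry type is typically only canonical up to homotopy, for instance conjugation by a lift of a reflection whose square is central rather than trivial, as in $\Pin^\pm$. It therefore has to be realised as an honest order-two map of a model of $BH_n$ over $BO(n)$ compatible with the map to $BO(n)$. I would first try to build the model of $BH_n$ as $EO(n)\times_{O(n)}$ of a suitable quotient, or use $\zz/2$-equivariant classifying spaces, so that the reflection acts strictly. A secondary subtlety is choosing a completion functor that is strictly functorial and compatible with the $\Gamma$-structure; Rezk's simplicial completion, applied levelwise, should suffice.
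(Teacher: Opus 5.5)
Your architecture matches the paper's. You leave $M$ and the cut data untouched and twist only the tangential structure. You then check, levelwise on $P\cob_n(H_n)$, compatibility with faces, degeneracies and the $\Gamma$-structure maps and that the square is the identity, and push the resulting functor out of $B\zz/2$ through a functorial completion (Construction \ref{con:alpha}, Lemmas \ref{lem:alphawelldef}--\ref{lem:alphamonoidal}, Proposition \ref{prop:strictaction}).

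The gap is the one step with real content. In your second paragraph you assume a strict involution of $BH_n$ \emph{over} $BO(n)$, calling it equivalent to the group involution, and in your last paragraph you defer its construction. That equivalence fails for the natural models. The involution $c_{\hat h}$ satisfies $\rho_n\circ c_{\hat h}=c_\varsigma\circ\rho_n$, so $Bc_{\hat h}$ covers $Bc_\varsigma$ rather than $\id_{BO(n)}$. Postcomposing a lift of the tangent classifier with it therefore does not give a lift of the same map. Patching this with a homotopy $Bc_\varsigma\simeq\id$ would not give $\alpha^2=\id$ on the nose.

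You also misplace the difficulty. Once $\hat h$ is fixed, the group-level involution is already strict, since $\hat h^2=1$ (Notation \ref{not:hath}). Even a lift whose square is merely central would give $c^2=\id$ exactly. With a functorial bar construction, $Bc_{\hat h}$ then squares to the identity. The choice of $\hat h$ is non-canonical only up to inner automorphisms (Remark \ref{rmk:choiceofh}), which is harmless. The only real issue is the base $BO(n)$.

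The paper avoids asking for anything over $BO(n)$. A structure is a pair $(f,\triv)$ with $\triv\colon\ker(D\pi_l)\cong f^{*}\gamma_{H_n}$. The paper builds a bundle automorphism $\lambda$ of $\gamma_{H_n}$, $[v,[e,g]]\mapsto[\varsigma^{-1}v,[Ec_{\hat h}(e),\varsigma^{-1}g\varsigma]]$, which covers $Bc_{\hat h}$ and acts on fibers by $v\mapsto\varsigma^{-1}v$. The fiberwise reflection absorbs the discrepancy between $Bc_\varsigma$ and the identity. Its square is the identity because $E(c_{\hat h}^2)=\id$ and $\varsigma^2=1$ (Construction \ref{con:lambda}, Lemmas \ref{lem:lambdawelldef} and \ref{lem:lambdainvolutive}).

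Your own route can be completed with the same trick, putting the reflection on frames. Take $(EH_n\times V_n(\rr^\infty))/H_n\to\mathrm{Gr}_n(\rr^\infty)$, where $V_n(\rr^\infty)$ is the contractible Stiefel space and $H_n$ acts on frames through $\rho_n$. Use the involution $[e,x]\mapsto[Ec_{\hat h}(e),x\varsigma]$.
\begin{itemize}
\item It is well defined because $x\rho_n(k)\varsigma=x\varsigma\,\rho_n(c_{\hat h}(k))$.
\item It lies strictly over the Grassmannian because $x$ and $x\varsigma$ span the same plane.
\item It squares to the identity.
\end{itemize}
With either construction supplied, your remaining checks and the completion step go through as you describe.
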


 The   strictness of the action  is a necessity for the our  model for symmetric  of $G$-symmetric monoidal category, where $G$ is a finite group, used in this paper. However, we do not want our equivariance data to be strict, as that would be too restrictive. To that end, we work with an appropriate model structure for $G$-symmetric monoidal $n$-fold complete Segal spaces and consider our theories as elements of derived mapping spaces. With this approach we get the following result characterizing reflection theories. 

\begin{mainthm}[Theorem \ref{z2fixedisreflection}]
 Let $\C$ be a $\zz/2$-symmetric monoidal $n$-fold complete Segal space. Then 
    \[
\Fun^\otimes_{\zz/2}(\cob_n(H_n),\C) \simeq \Fun^\otimes(\cob_n({H_n}),\C)^{h\zz/2}.
    \]
    where $\Fun^\otimes_{\zz/2}(\cob_n(H_n),\C)$ denotes the space of $\C$-valued reflection theories. 
\end{mainthm}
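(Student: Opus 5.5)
This is a statement of the standard form "derived equivariant maps equal homotopy fixed points of the conjugation action", so the plan is to obtain it from a general model-categorical fact and then specialize to $\cob_n(H_n)$ using Theorem \ref{thm: equivZ2bord}. Write $G=\zz/2$ and let $\M$ be the model category of symmetric monoidal $n$-fold complete Segal spaces, which we take to be simplicial (or at least enriched over spaces, with derived mapping spaces $\Fun^\otimes(-,-)$). The model for $G$-symmetric monoidal objects will be the category $\M^{BG}$ of strict $G$-objects in $\M$ with the projective model structure. Weak equivalences and fibrations are detected underlying, and $\Fun^\otimes_{G}(X,\C)$ is the derived mapping space $\Map_{\M^{BG}}(X^{c},\C^{f})$.

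\textbf{Step 1 (free/forgetful adjunction).} The forgetful functor $U:\M^{BG}\to\M$ has left adjoint $G\times(-)=\coprod_{g\in G}(-)$, where the coproduct is taken in $\M$, i.e.\ the tensor product of symmetric monoidal objects. This is a Quillen adjunction for the projective structure, since $U$ preserves fibrations and weak equivalences by definition.

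\textbf{Step 2 (mapping spaces of strict $G$-objects are homotopy fixed points).} For $X$ projectively cofibrant in $\M^{BG}$, the strict mapping space satisfies $\Map_{\M^{BG}}(X,\C)=\Map_\M(UX,U\C)^{G}$, where $G$ acts by conjugation $f\mapsto g\circ f\circ g^{-1}$. I then replace $X$ by $EG\times X$, or more precisely by the bar resolution $|B_\bullet(G\times-,G,X)|$, built from free $G$-objects, which is cofibrant and weakly equivalent to $X$. For this resolution,
\[
\Map_{\M^{BG}}(EG\times X,\C)\cong \Map_G(EG,\Map_\M(X,\C)) = \Map_\M(X,\C)^{hG}.
\]
If $\C$ is fibrant and $X$ cofibrant in $\M$, the right-hand side is the derived mapping space $\Fun^\otimes(X,\C)$ with its induced $G$-action, so taking $X=\cob_n(H_n)$ yields the displayed equivalence.

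\textbf{Step 3 (cofibrancy of the bordism object).} One must check that $\cob_n(H_n)$ with the involution from Theorem \ref{thm: equivZ2bord} is cofibrant in $\M$; otherwise one takes a $G$-equivariant cofibrant replacement, for instance by applying a functorial cofibrant replacement in $\M$, which is strict $G$-equivariant because it is functorial. Functoriality also ensures that the conjugation action is compatible with the weak equivalences used, so the comparison does not depend on the choice of replacement.

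\textbf{Main obstacle.} I expect the hardest step to be Step 2: verifying that $\M$ is sufficiently enriched, with the tensoring $EG\times X$ defined and homotopically well-behaved for symmetric monoidal Segal spaces, so that $\Map_{\M^{BG}}(-,\C)$ computes the derived mapping space on the bar resolution. A fallback is to pass to underlying $\infty$-categories, identify $\M^{BG}[W^{-1}]$ with $\Fun(BG,\M[W^{-1}])$, which holds for the projective structure, and then use the end formula
\[
\Map_{\Fun(BG,\mathcal E)}(X,\C)\simeq \lim_{BG}\Map_{\mathcal E}(X,\C) = \Map(X,\C)^{hG}.
\]
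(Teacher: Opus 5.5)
Your proposal is correct and follows essentially the same route as the paper, which takes the projective model structure on $(\CSS^\otimes_n)^{BG}$, uses $\C\otimes EG$ as the cofibrant replacement, and identifies $\Map(\C\otimes EG, R\D)$ with $\Map_{\sset^G}(EG,\Map(\C,R\D))$ via the $\sset^G$-enriched tensor adjunction with the conjugation action. The paper resolves the obstacle you flag in two ways: every object of $\CSS^\otimes_n$ is cofibrant, since it is a left Bousfield localization of an injective structure, which also makes your Step~3 unnecessary; and the untwisting isomorphism $\C\otimes(G\times K)\cong G\otimes(U\C\otimes K)$ shows that $\C\otimes EG$ is a cell complex built from free projective cofibrations.
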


With the above theorem, we proceed to use the above result to produce the equivalent of the equivariance data from Freed and Hopkins in the extended case.

\begin{mainthm}[Theorem \ref{t:main}]
Let $\C$ be a $\zz/2$-symmetric monoidal $n$-fold complete Segal space with action
$\tau$, and let $Z$ be a $\C$-valued reflection TQFT with symmetry type $H_n$. Then for every object $Y$ of
$\cob_n(H_n)$ there is an equivalence
\[
\tau\bigl(Z(Y)\bigr) \ \simeq\ Z(\alpha Y)
\]
in $\C$, where $\alpha$ is the involution of Proposition \ref{prop:strictaction}.
\end{mainthm}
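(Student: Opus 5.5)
The plan is to deduce the statement from Theorem \ref{z2fixedisreflection}. That theorem identifies the space of reflection theories $\Fun^\otimes_{\zz/2}(\cob_n(H_n),\C)$ with the homotopy fixed points $\Fun^\otimes(\cob_n(H_n),\C)^{h\zz/2}$. Here $\zz/2$ acts on the space of symmetric monoidal functors by conjugation, $Z \mapsto \tau \circ Z \circ \alpha$. This uses the strict action $\alpha$ of Proposition \ref{prop:strictaction} on $\cob_n(H_n)$ and the action $\tau$ on $\C$; both are involutions, so conjugation squares to the identity.

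First I would record that $Z$ is a reflection TQFT, hence a point of $\Fun^\otimes_{\zz/2}(\cob_n(H_n),\C)$. Under the equivalence of Theorem \ref{z2fixedisreflection} it gives a homotopy fixed point. Unwinding, a point of $X^{h\zz/2}=\Map_{\zz/2}(E\zz/2,X)$ supplies at least an underlying point $Z\in X$ together with a path $\eta: Z \simeq \tau\circ Z\circ\alpha$ in $X$, coming from the $1$-cell of $E\zz/2$. The higher cells give coherence data, but we discard it, since only the $1$-cell is needed.

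Next, a path in the mapping space $\Fun^\otimes(\cob_n(H_n),\C)$ is an equivalence of symmetric monoidal functors, that is, a natural equivalence. This holds because the derived mapping space is computed in the complete Segal (hence local) model structure, so paths agree with invertible natural transformations. Evaluating $\eta$ at an object $\alpha Y$ gives an equivalence $Z(\alpha Y)\simeq \tau Z(\alpha\alpha Y)=\tau Z(Y)$ in $\C$, using $\alpha^2=\id$ strictly. Concretely, I would evaluate at the level of $0$-simplices: precompose with the inclusion of the object $Y$ (a map from the point into the $(0,\dots,0)$-level space), and use that equivalences in a complete $n$-fold Segal space are paths in that space.

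The main obstacle is justifying that a path in the derived functor space restricts to a path between the object-level values. The derived mapping space is computed after cofibrant/fibrant replacement, so the concrete functor $Z$ and the action $\alpha$ may only be available up to replacement. I would handle this by working with a fibrant replacement of $\C$, where $\tau$ remains strict. I would also use that $\cob_n(H_n)$, as constructed in Theorem \ref{thm: equivZ2bord}, may be taken cofibrant, or the replacement map may be taken $\zz/2$-equivariant and a levelwise equivalence on objects. Evaluation at the vertex $Y$ is then a map of spaces $\Fun^\otimes(\cob_n(H_n),\C)\to \C_{0,\dots,0}$. It sends the path $\eta$ to a path from $Z(\alpha Y)$ to $\tau Z(Y)$, which is the required equivalence by completeness.
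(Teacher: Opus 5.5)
Your proposal follows the paper's proof. You extract from the homotopy fixed point the edge $\eta$ from $Z$ to $\tau\circ Z\circ\alpha$ in $\Map(\cob_n(H_n),R\C)$, restrict it along the object to get a path in $(R\C)_{0,\dots,0}$, and turn that path into an equivalence. You evaluate at $\alpha Y$ and use $\alpha^2=\id$ where the paper evaluates at $Y$ and uses $\tau^2=\id$, and you invoke completeness for the last step where the paper lifts $(\mathrm{const}_{Z(Y)},p)$ along the Reedy fibration $(d_1,d_0)$ starting at $\id_{Z(Y)}$, which needs only Reedy fibrancy; your remark about paths being natural equivalences of functors is not needed.
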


Finally, we use the obtained results to study a concrete example. We work in dimension one using the category of super algebras with complex conjugation as the target for our theories and using the group $H_1 = \Spin_1 = \{\pm1\}$ to get the following result. 

\begin{maincorollary}[Corollary \ref{cor:cliff}]
Let $A$ be a super-algebra. If $Z$ admits a reflection structure in the sense of Definition \ref{def:correctZ2}, and $Z(\pt_+)=A$, then there is an equivalence
\[
\overline{A} \ \simeq\ A^{\sop}
\]
in $s\Alg_1(\cc)$.
\end{maincorollary}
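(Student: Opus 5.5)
The plan is to specialize Theorem \ref{t:main} to $n=1$, $H_1=\Spin_1=\{\pm 1\}$ and the target $\C = s\Alg_1(\cc)$. Here $s\Alg_1(\cc)$ is the Morita $1$-category of super algebras, bimodules and intertwiners, with strict $\zz/2$-action $\tau = \overline{(-)}$ given by complex conjugation of the underlying vector spaces and multiplication. Three things must then be identified: the value $Z(\pt_+)=A$, the involution $\alpha$ on the point, and the value of $Z$ on that image.

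Step one is to compute $\alpha(\pt_+)$ for the $\Spin_1$-structure. I would use the explicit form of the involution from Proposition \ref{prop:strictaction}: it reflects the tangent framing, i.e.\ it acts by the reflection in $O(1)$ lifted to $\Pin$-data. On the $0$-manifold $\pt$ with a stabilized $\Spin_1$-structure, reflecting the normal direction should exchange the two orientations. I therefore expect $\alpha(\pt_+) = \pt_-$, with the lift data recorded by the sign in $\Spin_1$ being untouched up to equivalence.

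Step two is to identify $Z(\pt_-)$ with $A^{\sop}$. Since $Z$ is symmetric monoidal, $Z(\pt_-)$ is the monoidal dual of $Z(\pt_+)$, because $\pt_-$ is dual to $\pt_+$ via the elbow bordisms. In the super Morita category, the dual of $A$ is the super-opposite algebra $A^{\sop}$, with multiplication $a\cdot b = (-1)^{|a||b|}ba$, and duals are unique up to equivalence. The Koszul sign appears because the symmetric braiding on super vector spaces enters the evaluation bimodule ${}_{A\otimes A^{\sop}}A$.

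Finally, Theorem \ref{t:main} gives $\tau(Z(\pt_+)) \simeq Z(\alpha\pt_+)$, and therefore
\[
\overline{A} \simeq Z(\pt_-) \simeq A^{\sop}.
\]
The resulting equivalence is a Morita equivalence, that is, an equivalence in $s\Alg_1(\cc)$, which is exactly the claim.

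The main obstacle is step one: checking carefully that the strict involution on $\Spin_1$-points really sends $\pt_+$ to $\pt_-$, rather than to $\pt_+$ with a twisted spin structure. This depends on the chosen reflection lift in $H_n$, and the abstract suggests the outcome differs between "stellar" and "real" cases. I would settle it by unwinding the construction of $\alpha$ on $0$-simplices of the complete Segal space, i.e.\ points embedded in $\rr$ with $H_1$-structure on $\rr$. I would then verify directly that reflection $x\mapsto -x$ reverses the induced orientation, and that the spin sign is sent to itself. A secondary check is the sign convention identifying the dual with $A^{\sop}$ rather than $A^{\op}$.
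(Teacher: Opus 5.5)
Your proposal is correct and follows the paper's route: Theorem \ref{t:main} gives $\overline{A}\simeq Z(\alpha\,\pt_+)$, and Lemma \ref{lem:alphapt}(i) gives $\alpha\,\pt_+\cong\pt_-$; the paper disposes of your ``main obstacle'' by noting that isomorphism classes of $\Spin_1$-points are $\pi_0F\cong O(1)/SO(1)$, on which the reflection acts by the swap, and Lemma \ref{lem:ptdual} together with $A^{\vee}\simeq A^{\sop}$ in $\mathrm{Mor}_1$ then identifies $Z(\pt_-)\simeq A^{\sop}$. One small correction to your picture of step one: the paper's $\alpha$ does not reflect the embedded point by $x\mapsto -x$ (it fixes $M$ and $\bar I$) but composes $\triv$ with $\lambda$, which acts by $-1$ on the fibers of $\gamma_{H_1}$, and since $\hat H_1\cong\zz/2\times\zz/2$ is abelian one has $c_{\hat h}=\id$, so the spin data is literally untouched, as you expected.
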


The structure on the algebra associated to the point appearing above is called a \emph{stellar structure}, and the result is consistent with those found by M\"uller and Stehouwer in \cite{MullerStehouwer}.

\subsection*{Acknowledgments}
I am very grateful to my PhD advisor, Julie Bergner, for her guidance, feedback and allowing me to pursue my interests. I would also like to thank  Eleftherios Chatzitheodoridis, for being helpful and patient with teaching me about the technical details of Bousfield localizations. I am also very thankful to  Dan Berwick-Evans for bringing the motivation for the ideas of this paper into my attention. Finally, I would also like to thank Luuk Stehouwer for being kind enough to explain further the motivation of the constructions presented here and other ways to develop the theory of reflection TQFTs.
\section{Background on  \texorpdfstring{$n$}{n}-fold complete Segal spaces}
Below we recall  some facts about the theory of complete Segal spaces. 

\subsection{Simplicial sets}

We first go over some elements of the theory of simplicial sets and model categories that we use in this paper. For more details, see \cite{goerss2009simplicial}.

\begin{definition}
    The \textit{simplex category} $\Delta$ is the category whose objects are the finite ordered sets $[n] = \{0 < 1 < \cdots < n\}$ for $n \geq 0$, and whose morphisms are the order-preserving maps between them.
\end{definition}

\begin{definition}
    A \textit{simplicial set} is a functor $X: \Delta^\op \to \set$. A morphism of simplicial sets is a natural transformation between such functors. We denote the category of simplicial sets by $\sset$. 
\end{definition}

\begin{definition}
    For $[n] \in \Delta$, the representable functor $\Delta[n] = \Hom_{\Delta}(-, [n])$ is a simplicial set called the \textit{standard $n$-simplex}. 
\end{definition}

\begin{example}
    For $\C$ a small category, the \textit{nerve} of $\C$ is the simplicial set $N\C$ whose set of $n$-simplices is  $(N\C)_n = \Hom_{\cat}([n], \C)$, the set of functors from $[n]$ to $\C$, where $[n]$ is viewed as a poset category The face and degeneracy maps are induced by the corresponding maps in $\Delta$. In particular, $\Delta[n]$ is the nerve of the category $[n]$.
\end{example}

\begin{definition}

    The identity $id_{[n]} : [n] \ra [n]$ defines a non-degenerate $n$-simplex $\sigma_n \in \Delta[n]$. The \textit{boundary}  of $\Delta[n]$ is the smallest sub-simplicial set  of $\Delta[n]$ containing $d_i(\sigma_n)$ for all $i$. The $k$-th $n$-\textit{horn} $\Lambda^k[n]$ is the smallest sub-simplicial set of $\Delta[n]$ containing $d_i(\sigma_n)$ for all $i \neq k$.

\end{definition}

\begin{definition}
    A morphism of simplicial sets $f: X \ra Y$ is a \textit{Kan fibration} if for every $n \geq 1$ and $0 \leq k \leq n$, any commutative square of the form
    \[
    \begin{tikzcd}
    \Lambda^k[n] \arrow[r] \arrow[d, hook] & X \arrow[d, "f"] \\
    \Delta[n] \arrow[r] & Y
    \end{tikzcd}
    \]
    admits a lift $\Delta[n] \ra X$ making both triangles commute. A simplicial set $X$ is a \textit{Kan complex} if the unique morphism $X \ra \Delta[0]$ is a Kan fibration.
\end{definition}

\subsection{Model categories}

Now we recall the definition of a model category, which is a category equipped with three distinguished classes of morphisms. For details see \cite{hirschhorn2003model}. 

\begin{definition}
    A \textit{category with weak equivalences} is a category $\C$ together with a subclass $W \subseteq \mathrm{Mor}(\C)$ containing isomorphisms of $\C$. The morphisms in $W$ are called \textit{weak equivalences}. Furthermore, we require that $W$ satisfies the 2-out-of-3 property: if $f: X \ra Y$ and $g: Y \ra Z$ are morphisms in $\C$ such that two of the three morphisms $f$, $g$, and $g \circ f$ are in $W$, then so is the third.
    A \textit{model category} is a category with weak equivalences $(\C,W)$  which has all limits and colimits together with two additional classes of morphisms, called \textit{fibrations} and \textit{cofibrations}. A cofibration which is also a weak equivalence is called a \textit{trivial cofibration}, and a fibration which is also a weak equivalence is called a \textit{trivial fibration}. These classes have to satisfy several axioms.
\end{definition}

\begin{definition}
   Let $\varnothing$ and $*$ denote the initial and terminal objects, respectively, of a model category  $\C$.  We say that $X$ is a \emph{fibrant object} if the unique morphism $X \ra *$ is a fibration. Dually, we say that $X$ is a \emph{cofibrant object} if the initial morphism $\varnothing \ra X$ is a cofibration. 
    
\end{definition}

\begin{theorem}\cite{Quillen:1967}
    The category of simplicial sets $\sset$ admits a model structure where the weak equivalences are the morphisms $f: X \ra Y$ such that $|f|: |X| \ra |Y|$ is a weak homotopy equivalence of topological spaces, the fibrations are the Kan fibrations, and the cofibrations are the monomorphisms.
\end{theorem}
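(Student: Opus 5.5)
We prove Quillen's theorem by the standard route: cofibrantly generate the model structure from the boundary and horn inclusions, then identify the resulting fibrations with the Kan fibrations. Let $I = \{\partial\Delta[n] \hookrightarrow \Delta[n]\}_{n\geq 0}$ and $J = \{\Lambda^k[n] \hookrightarrow \Delta[n]\}_{n\geq1,\,0\le k\le n}$. Every simplicial set is small with respect to these finite domains. Limits and colimits in $\sset$ are computed levelwise, and weak equivalences satisfy 2-out-of-3 because weak homotopy equivalences of spaces do.

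First, we handle the cofibrations. The small object argument applied to $I$ factors any map as a relative $I$-cell complex followed by a map with the right lifting property against $I$. Relative $I$-cell complexes are monomorphisms. Conversely, every monomorphism $A \hookrightarrow B$ is a relative $I$-cell complex, obtained by attaching the non-degenerate simplices of $B$ not in $A$ skeleton by skeleton. Hence the cofibrations are exactly the monomorphisms. Next, we treat $J$: the small object argument factors any map as a relative $J$-cell complex followed by a Kan fibration. Each horn inclusion realizes to a deformation retract, so relative $J$-cell complexes are trivial cofibrations.

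The lifting and factorization axioms then reduce, via the retract argument, to two comparisons. The first is \emph{(a)}: maps with the right lifting property against $I$ are exactly the Kan fibrations that are weak equivalences. The second is \emph{(b)}: monomorphisms that are weak equivalences have the left lifting property against all Kan fibrations. For (a), one direction is elementary. A map with lifting against $I$ is a Kan fibration, since horn inclusions are monomorphisms. It is also a weak equivalence, because it admits a section and a fiberwise simplicial homotopy built from $\partial\Delta[n]\times\Delta[1]\cup\Delta[n]\times\partial\Delta[1]$ lifting problems. The converse states that a Kan fibration which is a weak equivalence has lifting against $I$. This requires knowing that the fibers are contractible Kan complexes, and then that combinatorial homotopy groups of Kan complexes agree with those of the realization.

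For (b), factor a trivial monomorphism $i$ as $p\circ j$, with $j$ a relative $J$-cell complex and $p$ a Kan fibration. By 2-out-of-3, $p$ is a weak equivalence, so by (a) it lifts against $i$. The retract argument then shows $i$ is a retract of $j$, and hence lifts against Kan fibrations. The lifting and factorization axioms follow.

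The main obstacle is the converse direction of (a), which needs genuine homotopy theory comparing simplicial and topological homotopy groups. I would first try the classical method: prove that $|{-}|$ sends Kan fibrations to Serre fibrations, which is Quillen's theorem and uses minimal fibrations being fiber bundles. The long exact sequence then shows the fibers are weakly contractible. Finally, I would use that a Kan complex with trivial combinatorial homotopy groups has lifts against all boundary inclusions. An alternative route would use Kan's $\mathrm{Ex}^\infty$ to compare $X$ with $\mathrm{Sing}|X|$.
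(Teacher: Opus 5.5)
The paper gives no proof of this theorem; it is quoted from Quillen as background, so there is no argument in the paper to compare with. Your outline is the standard cofibrantly generated proof, as in Goerss--Jardine or Hovey. The following parts are all fine:
\begin{itemize}
\item the identification of cofibrations with monomorphisms;
\item the argument that relative $J$-cell complexes are trivial cofibrations;
\item the reduction of the axioms to (a) and (b);
\item the easy half of (a), and the retract argument for (b).
\end{itemize}
Citing the comparison of combinatorial and topological homotopy groups of Kan complexes is also acceptable.

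There is, however, a missing step in the hard half of (a). Your chain ends by showing that every fibre $F_y=p^{-1}(y)$ is a Kan complex with trivial combinatorial homotopy groups, so $F_y\to\Delta[0]$ lifts against every $\partial\Delta[n]\hookrightarrow\Delta[n]$. That is a statement about the fibres, and it does not by itself give lifts for $p$. A lifting problem $(u\colon\partial\Delta[n]\to X,\ \sigma\colon\Delta[n]\to Y)$ is a section problem for the pullback of $p$ along $\sigma$. That pullback is not a fibre over a vertex unless $\sigma$ is totally degenerate, so you need a local-to-global argument.

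\textbf{Fix via anodyne extensions.} Let $h\colon\Delta[n]\times\Delta[1]\to\Delta[n]$ be induced by $(i,\epsilon)\mapsto i\epsilon$, so $h$ restricts to the identity on $\Delta[n]\times\{1\}$ and to the constant map at the vertex $0$ on $\Delta[n]\times\{0\}$.
\begin{enumerate}
\item Lift against $\partial\Delta[n]\times\{1\}\hookrightarrow\partial\Delta[n]\times\Delta[1]$ over $\sigma h$. This moves $u$ into the fibre $F_{\sigma(0)}$.
\item Extend the resulting map $\partial\Delta[n]\to F_{\sigma(0)}$ to $\Delta[n]$, using that this fibre is trivial.
\item Lift against $\partial\Delta[n]\times\Delta[1]\cup\Delta[n]\times\{0\}\hookrightarrow\Delta[n]\times\Delta[1]$, again over $\sigma h$, and restrict to $\Delta[n]\times\{1\}$. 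This gives the required lift.
\end{enumerate}
This uses that Kan fibrations lift against these pushout-product maps (Gabriel--Zisman), a fact you need anyway for your claim about Kan complexes with trivial homotopy groups.

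\textbf{Fix via minimal fibrations.} You already invoke these for the Serre-fibration theorem.
\begin{enumerate}
\item Choose a minimal fibration $q\colon Z\to Y$ that is a fibrewise strong deformation retract of $p$. The retraction $X\to Z$ is then a trivial fibration.
\item The fibres of $q$ are minimal Kan complexes homotopy equivalent to the $F_y$. They are therefore nonempty with trivial homotopy groups, hence single points.
\item Minimal fibrations are locally trivial, so $q$ is an isomorphism. Hence $p$ is a trivial fibration.
\end{enumerate}
With either patch your proposal becomes a complete outline.
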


\begin{definition}
    A \textit{Quillen adjunction} between two model categories $\C$ and $\D$ is an adjunction $F: \C \leftrightarrows \D : G$  with $F$ left adjoint such that $F$ preserves cofibrations and trivial cofibrations, or equivalently, $G$ preserves fibrations and trivial fibrations. A Quillen adjunction is a \textit{Quillen equivalence} if for every cofibrant object $X$ in $\C$ and every fibrant object $Y$ in $\D$, a morphism $f: F(X) \ra Y$ is a weak equivalence in $\D$ if and only if the corresponding morphism $f^{\#}: X \ra G(Y)$ is a weak equivalence in $\C$.
\end{definition}

\subsection{Simplicial spaces}

\begin{definition}
	\textit{A simplicial space} is a simplicial object in the category of simplicial sets $\sset$, that is, a functor $\Delta^\op \ra \sset$. A morphism of simplicial space is just a natural transformation of functors. We denote the category of simplicial spaces by $\spa$.
\end{definition}

\begin{remark}
	A simplicial space can also be regarded  as bisimplicial set, that is, a functor
	\[
	X : \Delta^\op \times \Delta^\op \lra \set.
	\]
\end{remark}

If  $X$ is a simplicial set, then there are two ways to regard  $X$ as a simplicial space. Since a simplicial space is a simplicial set in two directions we have a choice of including it on the first index or the second index. By convention, we  treat $X$ as a constant simplicial space, that is, a simplicial space whose simplicial set at level $n$ is just $X$.

 We can also  treat $X$ as a \textit{discrete} simplicial space by  letting $X_{n,m}:= X_n$, and declaring all maps $X_{n,m}\ra X_{n,k}$ to be the identity. This simplicial space is called discrete because  each level, $X_n$ is a discrete simplicial set, that is, just a set. 

\begin{notation}
	If $X$ be a simplicial set, then by abuse of notation we will write $X$ for the corresponding constant simplicial space and $X^t$ is the corresponding discrete simplicial space. The superscript $t$ stands for ``transpose", since we have that $X_{n,m} = (X^t)_{m,n}$.
\end{notation}

The category of simplicial spaces is enriched over the category of simplicial sets. To show this enrichment, we need to define what a mapping space is.

\begin{definition}
	For $X,Y$ simplicial spaces, define the simplicial set $\Map(X,Y)$ by 
	\[
	\Map(X,Y)_n = \Hom(X \times \Delta[n], Y) =\Hom(\Delta[n],\Map(X,Y)).
	\]
\end{definition}

Since the representables are $\Delta[m,n] := \Hom_{\Delta^{\op, \times 2}}(- , ([n],[m]))$, we have the following very useful identification.

\begin{proposition}
	Let $X$ be any simplicial space. Then
	\[
	\Map(\Delta[n,0], X) \cong X_n.
	\]
\end{proposition}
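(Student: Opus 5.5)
The plan is to apply the Yoneda lemma for functors on $\Delta^\op \times \Delta^\op$, together with the definition of the mapping space $\Map(X,Y)_m = \Hom(X \times \Delta[m], Y)$. First we need to pin down the conventions. In the notation $\Delta[n,0]$ the first index is the simplicial (categorical) direction and the second is the space direction, so $\Delta[n,0]$ is the discrete simplicial space $\Delta[n]^t$, with $(\Delta[n,0])_{k,l} = \Hom_\Delta([k],[n])$ for every $l$. Also, in $X \times \Delta[m]$ the factor $\Delta[m]$ is the constant simplicial space, with $(\Delta[m])_{k,l} = \Hom_\Delta([l],[m])$. Under these conventions $\Delta[n,0] \times \Delta[m] \cong \Delta[n,m]$, the bisimplicial set represented by $([n],[m])$, because products of representables on a product of categories are computed levelwise.

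Next we compute the $m$-simplices. We have
\[
\Map(\Delta[n,0],X)_m = \Hom(\Delta[n,0]\times \Delta[m], X) \cong \Hom(\Delta[n,m], X) \cong X_{n,m} = (X_n)_m,
\]
where the last isomorphism is the Yoneda lemma for the bisimplicial set $X : \Delta^\op\times\Delta^\op \to \set$.

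Finally we check that these bijections assemble into an isomorphism of simplicial sets. For this we need naturality in $[m]$: a map $\theta : [m'] \to [m]$ acts on the left side by precomposition with $\id \times \theta_* : \Delta[n,0]\times\Delta[m'] \to \Delta[n,0]\times\Delta[m]$. Under the identification above, this map corresponds to $\Delta[n,m'] \to \Delta[n,m]$ induced by $(\id,\theta)$. Since Yoneda is natural, the induced map on $X$ is $X(\id,\theta) : X_{n,m} \to X_{n,m'}$, which is exactly the simplicial structure map of $X_n$.

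There is no real obstacle here. The only point requiring care is the convention: we must make sure the constant/discrete directions line up, so that $\Delta[n,0]\times\Delta[m]$ really is the representable $\Delta[n,m]$ and not its transpose. Everything else is a formal consequence of Yoneda.
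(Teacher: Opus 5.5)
Your argument is correct and is the Yoneda argument the paper has in mind: the paper gives no separate proof beyond remarking that the $\Delta[m,n]$ are the representables, and your identification $\Delta[n,0]\times\Delta[m]\cong\Delta[n,m]$, followed by Yoneda and naturality in $[m]$, is precisely what that remark abbreviates. Your care about conventions is well placed, since the paper writes the representables as $\Delta[m,n]:=\Hom(-,([n],[m]))$ with the indices in the opposite order, but its later identification of $\Map(\Delta[n]^t,X)$ with $X_n$ when defining the Segal maps confirms your reading $\Delta[n,0]=\Delta[n]^t$.
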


\subsection{Segal spaces}
In this section we recall the definition of Segal maps and Segal spaces. 
\begin{definition}
Consider the maps  $\alpha^i: [1]\ra [n]$ in $\Delta$ given by $\alpha^i(0)=i$ and $\alpha^i(1)=i+1$. Then the \textit{spine} of $\Delta[n]$ is the simplicial set
\[
G(n)=\bigcup_{i=0}^{n+1} \alpha^i(\Delta[1]) \subset \Delta[n].
\]

	Let $G(n)^t$ be $G(n)$ viewed as a simplicial space and let $X$ be a simplicial space. Then we have a map
	\[
	\Map(\Delta[n]^t, X) \lra   \Map(G(n)^t, X)
	\]
	induced by the inclusion $G(n) \hookrightarrow \Delta[n]$ that corresponds to a map of simplicial sets
	\[
	\varphi_n : X_n \lra X_1 \times_{X_0} \dots \times_{X_0} X_1.
	\]
	The map $\varphi_n$ is called the \emph{$n$-th Segal map} of $X$.
\end{definition}

\begin{example}
	If $X = \n(\C)^t$ for some category $\C$, then all of its Segal maps are isomorphisms.
\end{example}

\begin{theorem}\cite[2.4]{DWYER1993137}
	There is a proper, cofibrantly generated simplicial model structure, called the Reedy model structure, on $\spa$ in which the weak equivalences are level wise weak equivalences of simplicial sets.     Furthermore, this model strcuture coincides with the injective model structure.
\end{theorem}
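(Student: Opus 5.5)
The plan is to build the Reedy model structure on $\spa = \Fun(\Delta^\op,\sset)$ directly from the Quillen model structure on $\sset$. Since $\Delta$ is a Reedy category, Reedy theory applies. For a simplicial space $X$ and $n\ge 0$ we form the latching object $L_nX = \colim_{[n]\twoheadrightarrow[k],\,k<n} X_k$ and the matching object $M_nX = \Map(\partial\Delta[n]^t, X)$. A map $f\colon X\to Y$ is declared a Reedy cofibration if each relative latching map $X_n\sqcup_{L_nX}L_nY\to Y_n$ is a monomorphism. It is a Reedy fibration if each relative matching map $X_n\to Y_n\times_{M_nY}M_nX$ is a Kan fibration. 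Weak equivalences are the levelwise weak equivalences.

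The general theorem on Reedy diagrams in a model category (e.g.\ \cite{hirschhorn2003model}, Ch.~15) then gives a model structure. It is cofibrantly generated, with generating cofibrations the pushout-products $\partial\Delta[n]^t\times\Delta[m]\cup\Delta[n]^t\times\partial\Delta[m]\to\Delta[n]^t\times\Delta[m]$. The generating trivial cofibrations are the analogous maps with $\partial\Delta[m]$ replaced by horns $\Lambda^k[m]$. This is where the identification $\Map(\Delta[n,0],X)\cong X_n$ from the Proposition above enters: it converts lifting against these generators into the latching and matching conditions.

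\emph{Simplicial enrichment.} We use the mapping spaces $\Map(X,Y)$ defined earlier, with tensor $X\times K$ for $K\in\sset$ a constant simplicial space. For axiom SM7 it suffices to check the pushout-product of a generating cofibration with a monomorphism of simplicial sets. This reduces to the pushout-product axiom in $\sset$, since products of representables $\Delta[n]^t\times\Delta[m]$ are handled levelwise in the second variable.

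\emph{Properness.} Pushouts and pullbacks in $\spa$ are computed levelwise, and weak equivalences are levelwise. Every Reedy cofibration is a levelwise monomorphism, and every Reedy fibration is a levelwise Kan fibration. So left and right properness follow directly from the properness of $\sset$.

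\emph{Comparison with the injective structure.} Both structures have the same weak equivalences, so it suffices to show their cofibrations coincide. The main step, and the main obstacle, is proving that $f$ is a Reedy cofibration exactly when each $f_n$ is a monomorphism. Reedy cofibrations are levelwise monomorphisms by the general theory. For the converse I would use the Eilenberg--Zilber lemma: every simplex of a simplicial set decomposes uniquely as a degeneracy applied to a nondegenerate simplex. Applied levelwise in the first direction of the bisimplicial set, it shows that $L_nX\to X_n$ is injective and that $L_nX$ is the union of the images of the degeneracies. Given a levelwise monomorphism $f$, the same uniqueness shows that a simplex of $Y_n$ lying in both the image of $X_n$ and the degenerate part $L_nY$ already comes from $L_nX$. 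Hence the relative latching map is injective. Once the cofibrations agree, the two model structures coincide, because the fibrations are then determined by the right lifting property against the trivial cofibrations.
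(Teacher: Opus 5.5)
The paper gives no proof of this statement; it is quoted as background from \cite[2.4]{DWYER1993137}. Your proposal is therefore an independent route, and it is correct.

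You obtain the model structure, its generators and cofibrant generation from the general theory of Reedy diagrams in \cite{hirschhorn2003model}, Ch.~15. The identification $\Map(\Delta[n]^t,X)\cong X_n$ turns lifting against $(\partial\Delta[n]^t\to\Delta[n]^t)\,\square\,(\partial\Delta[m]\to\Delta[m])$ and $(\partial\Delta[n]^t\to\Delta[n]^t)\,\square\,(\Lambda^k[m]\to\Delta[m])$ into conditions on the relative matching maps. You then isolate the one input specific to $\Delta$: by Eilenberg--Zilber, latching maps of simplicial sets are injective, so Reedy cofibrations in $\spa$ are exactly the levelwise monomorphisms. Since the weak equivalences already agree, the Reedy structure is the injective one. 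This also proves the injective structure exists, if you prefer not to assume it.

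The injectivity of the relative latching map does not even need the uniqueness part of Eilenberg--Zilber. If $f(x)=s_iy$, then $f(x)=s_id_if(x)=f(s_id_ix)$, so $x=s_id_ix$ is already degenerate.

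Your direct checks of properness and SM7 are fine. For the triviality clauses of SM7, note that at each level $(\partial\Delta[n]^t\to\Delta[n]^t)\,\square\,j$ is a coproduct of copies of $j$ and identities. The general Reedy theory would in any case transfer both properties from $\sset$.

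The citation buys economy. Your route makes explicit the fact the paper later relies on. Taking $X=\varnothing$ in your latching argument shows every simplicial space is Reedy cofibrant, which is what underlies condition (M2) in Lemma \ref{lem:Mn}. The same argument goes through for $n$-fold simplicial spaces, since multisimplices also decompose uniquely as degeneracies of nondegenerate ones.
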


This is enough to give intuition for the definition of a Segal space.

\begin{definition}
 	A \textit{Segal space} is a Reedy fibrant simplicial space $X$ such that all the Segal maps
	\[
 	\varphi_n : X_n \lra X_1 \times_{X_0} \dots \times_{X_0} X_1
	\]
	are weak equivalences of simplicial sets.
\end{definition}

We have a natural model structure on simplicial spaces coming  from that of simplicial sets. But now we have these special objects inside $\spa$, namely the Segal spaces, that  are all Reedy fibrant. Similarly, there is a model structure on $\spa$ such that the fibrant objects are exactly the Segal spaces.

\begin{theorem}\cite[7.1]{rezkCSS}
	There is a simplicial, left proper, combinatorial model structure on $\spa$ such that
	\begin{enumerate}
    	\item the cofibrations are monomorphisms;
    	\item the fibrant objects are the Segal spaces;
    	\item the weak equivalences are maps $f$ such that the induced map $\Map(f,W)$ is a weak equivalence of simplicial sets for every Segal space $W$.
	\end{enumerate}
\end{theorem}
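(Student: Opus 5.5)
The model structure can be obtained as a left Bousfield localization of the Reedy (equivalently injective) model structure on $\spa$ from \cite[2.4]{DWYER1993137}. That structure is simplicial, left proper and combinatorial, so Smith's existence theorem for left Bousfield localizations (as in \cite{hirschhorn2003model}) applies to any \emph{set} of maps. For each $n \geq 2$ I would take the spine inclusion
\[
G(n)^t \hookrightarrow \Delta[n]^t ,
\]
and let $S$ be the set of these maps. Define $L_S\spa$ to be the left Bousfield localization of the Reedy structure at $S$. The localization then inherits simpliciality, left properness and combinatoriality. Its cofibrations are unchanged, so they are still the monomorphisms, which gives (1).

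For (2), the fibrant objects of $L_S\spa$ are the $S$-local objects. These are the Reedy fibrant $X$ for which
\[
\Map(\Delta[n]^t, X) \lra \Map(G(n)^t, X)
\]
is a weak equivalence of simplicial sets for all $n$. Here $\Map(\Delta[n]^t,X) \cong X_n$ by the Proposition identifying mapping spaces out of representables. The key computation is that $G(n)^t$ is an iterated pushout $\Delta[1]^t \sqcup_{\Delta[0]^t} \cdots \sqcup_{\Delta[0]^t} \Delta[1]^t$ along monomorphisms, hence a homotopy pushout in the Reedy structure. Since $X$ is Reedy fibrant, $\Map(-,X)$ turns this into the homotopy pullback $X_1 \times_{X_0} \cdots \times_{X_0} X_1$. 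This is a strict pullback because $X_1 \to X_0 \times X_0$ is a fibration. So the map above is exactly the Segal map $\varphi_n$. Therefore the $S$-local objects are precisely the Segal spaces.

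For (3), the weak equivalences of a left Bousfield localization are by definition the $S$-local equivalences. These are the maps $f$ with $\Map^h(f,W)$ a weak equivalence for all $S$-local $W$. Every simplicial space is Reedy (injective) cofibrant, and each local $W$ is fibrant. Hence the derived mapping space agrees with $\Map(f,W)$, which gives (3).

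The step requiring the most care is the identification of $\Map(G(n)^t,X)$ with the iterated fibre product. It needs the mapping space out of a pushout of monomorphisms into a Reedy fibrant object to be a homotopy pullback, which uses the simplicial model structure axiom (SM7). The rest is formal from the general theory of localizations, as in Rezk's original argument \cite[7.1]{rezkCSS}.
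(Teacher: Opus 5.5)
Your proposal is correct and is essentially the argument behind the cited result. The paper gives no proof of its own and simply quotes \cite[7.1]{rezkCSS}, where the Segal space model structure is built exactly as the left Bousfield localization of the Reedy structure at the spine inclusions $G(n)^t\hookrightarrow\Delta[n]^t$. One small simplification: $\Map(G(n)^t,X)\cong X_1\times_{X_0}\cdots\times_{X_0}X_1$ holds strictly because $\Map(-,X)$ sends colimits to limits (in this paper the Segal map is even defined this way), so Reedy fibrancy is needed only to know that this strict fibre product is also the homotopy pullback, not for the identification itself.
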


\begin{remark}
	In this model structure, weak equivalences between Segal spaces are still levelwise weak equivalences of simplicial sets.
\end{remark}

\subsection{Complete Segal spaces}

 In this section we describe how Segal spaces can be used to  model categories $(\infty,1)$-categories.

\begin{definition}
	Let $X$ be a Segal space. Then we define:
	\begin{enumerate}
    	\item the set of objects of $X$ is $ob(X) := X_{0,0}$;
    	\item given $x,y \in ob(X)$, we define the mapping space between $x,y$ by the pullback
    	\[\begin{tikzcd}
	{\Map(x,y)} & {X_1} \\
	{\{(x,y)\}} & {X_0\times X_0;}
	\arrow["{(d_1,d_0)}", from=1-2, to=2-2]
	\arrow[from=2-1, to=2-2]
	\arrow[from=1-1, to=2-1]
	\arrow[from=1-1, to=1-2]
\end{tikzcd}\]
    	\item for $x\in ob(X)$, define $id_x \in \Map(x,y)_0$ to be $s_0(x)$.
	\end{enumerate}
\end{definition}

\begin{proposition}\cite{rezkCSS}
	If $X$ is a Segal space, then for any $x,y \in ob(X)$ the simplicial set $\Map(x,y)$ is a Kan complex. 
\end{proposition}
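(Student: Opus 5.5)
The plan is to exhibit $\Map(x,y)$ as a pullback of a Kan fibration along a map out of a point. It is defined by the pullback square
\[
\Map(x,y) = X_1 \times_{X_0\times X_0} \{(x,y)\},
\]
so it suffices to show two things. First, $(d_1,d_0)\colon X_1 \ra X_0\times X_0$ is a Kan fibration. Second, the one-point simplicial set $\{(x,y)\}\cong\Delta[0]$ is a Kan complex. Kan fibrations are stable under pullback, so the map $\Map(x,y)\ra \Delta[0]$ will then be a Kan fibration, which is exactly the statement that $\Map(x,y)$ is a Kan complex.

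The key step is the fibration property, and I would deduce it from Reedy fibrancy, which is part of the definition of a Segal space. The Reedy model structure on $\spa$ coincides with the injective one (the theorem cited from \cite{DWYER1993137}). Reedy fibrancy of $X$ means that for each $n$ the matching map $X_n \ra M_nX$ is a Kan fibration, where $M_nX = \Map(\partial\Delta[n]^t, X)$.

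For $n=1$ we have $\partial\Delta[1] = \Delta[0]\sqcup\Delta[0]$. Using the identification $\Map(\Delta[n,0],X)\cong X_n$, the matching object is $M_1X \cong X_0\times X_0$. The matching map is induced by restricting along the two vertex inclusions, so it is precisely $(d_1,d_0)$. This is a Kan fibration.

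The only point needing care is this identification of the matching map with $(d_1,d_0)$. It requires:
\begin{enumerate}
\item checking that $\Map(-,X)$ sends the coproduct $\Delta[0]^t\sqcup\Delta[0]^t$ to a product;
\item tracking which face corresponds to which vertex, so that the source and target are labelled correctly.
\end{enumerate}
Both are routine from the definition $\Map(A,X)_k=\Hom(A\times\Delta[k],X)$.

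For completeness I would also record that $X_0 = M_0X$-fibrant, meaning $X_0\ra\Delta[0]$ is a Kan fibration. So $X_0$, and hence $X_0\times X_0$, is Kan. This is not strictly needed for the pullback argument. The final step is the base change along $\{(x,y)\}\ra X_0\times X_0$, using the lifting-property characterization of Kan fibrations from the definition above: a horn lifting problem into the pullback is solved by lifting into $X_1$ and then pairing with the constant map to the point.
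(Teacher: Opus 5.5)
Your proposal is correct and is the standard argument from \cite{rezkCSS}; the paper cites that source and gives no proof of its own. The argument is that $(d_1,d_0)\colon X_1\ra M_1X\cong X_0\times X_0$ is the level-one Reedy matching map, hence a Kan fibration, so its fiber $\Map(x,y)$ over the vertex $(x,y)$ is a Kan complex. Your separate check that $\{(x,y)\}\cong\Delta[0]$ is Kan is not needed, since the pulled-back map already has target $\Delta[0]$.
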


\begin{definition}
	Let $f,g \in \Map(x,y)$, that is $f,g \in \Map(x,y)_{0}$.  Then we say that $f,g$ are \textit{homotopic} if they lie in the same path component in the Kan complex $ \Map(x,y)$.
\end{definition}

Because of this fact, the following is well defined.

\begin{definition}\label{Homotopycatsegal}
	For $X$ a Segal space, we define the \textit{homotopy category} $\H(X)$ of $X$ to be the category with
	\begin{enumerate}
    	\item $\H(X) := X_{0,0}$, and
    	\item $\Hom(x,y) := \pi_0\Map(x,y)$.
	\end{enumerate}
\end{definition}

\begin{definition}
	Let $X$ be a Segal space. Then its space of homotopy equivalences $X_{heq}\subset X_1$ is the subsimplicial set of $X_1$ whose $0$-simplices are homotopy equivalences.
\end{definition}

\begin{remark}
	Because $s_0(x)$ is a homotopy equivalence for all $x$, the map $s_0: X_0 \ra X_1$ factors through $X_{heq}$.
\end{remark}

\begin{definition}\cite{rezkCSS}
	A Segal space $X$ is \textit{complete} if the map $s_0: X_0 \ra X_{heq}$ is an equivalence of simplicial sets.
\end{definition}

As is the case with Segal spaces, we have a model structure for complete Segal spaces.

\begin{theorem}\cite[7.2]{rezkCSS}
	There is a simplicial, left proper, combinatorial model structure on $\spa$ such that
	\begin{enumerate}
    	\item the cofibrations are monomorphisms;
    	\item the fibrant objects are the complete Segal spaces and
    	\item the weak equivalences are maps $f$ such that the induced map $\Map(f,W)$ is a weak equivalence of simplicial sets for every complete Segal space $W$.
	\end{enumerate}
\end{theorem}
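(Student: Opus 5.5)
This is Rezk's theorem \cite[7.2]{rezkCSS}. The plan is to obtain the model structure as a left Bousfield localization of the Segal space model structure of \cite[7.1]{rezkCSS}. That structure is already simplicial, left proper and combinatorial, with cofibrations the monomorphisms. Equivalently, one can localize the Reedy (= injective) structure of \cite[2.4]{DWYER1993137} at the Segal and completeness maps together.

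First, I will take Smith's existence theorem for left Bousfield localizations of left proper combinatorial simplicial model categories at a \emph{set} of maps $S$. The resulting model category is again left proper, combinatorial and simplicial, and has the same cofibrations, so item (1) is automatic. Its fibrant objects are the $S$-local fibrant objects. Its weak equivalences are the $S$-local equivalences, namely maps $f$ with $\Map(f,W)$ a weak equivalence for every $S$-local $W$.

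Next, I choose $S$ to consist of the spine inclusions $G(n)^t \to \Delta[n]^t$ for $n\ge 2$, together with one completeness map $\Delta[0]^t \to E^t$. Here $E$ is the nerve of the free-living isomorphism, the category with two objects and a unique isomorphism between them. By the identification $\Map(\Delta[n,0],X)\cong X_n$, a Reedy fibrant $X$ is local with respect to the spine inclusions exactly when its Segal maps $\varphi_n$ are weak equivalences, which recovers \cite[7.1]{rezkCSS}. With these choices, item (3) follows once item (2) is established.

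The main step, and the hardest one, is item (2): for a Segal space $X$, locality with respect to $\Delta[0]^t\to E^t$ must be equivalent to completeness. For this I need Rezk's computation that $\Map(E^t,X)\simeq X_{heq}$ for a Segal space $X$, and that under this identification the restriction map corresponds to $s_0:X_0\to X_{heq}$. I would prove it by writing $E^t$ as a colimit of simplices, or by filtering $E$ by its skeleta. Each nondegenerate simplex of $E$ is an alternating chain of the two nonidentity arrows. The Segal condition then shows that the space of maps from higher skeleta restricts by trivial fibrations to the space of pairs consisting of an arrow together with a left and a right homotopy inverse. Finally, I would use the fact that the space of such data over $X_{heq}$ is contractible, as with the space of half-adjoint equivalence data. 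This step requires a careful analysis of the homotopy-inverse data and Reedy fibrancy, and I expect it to be the main obstacle.
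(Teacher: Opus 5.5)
Your outline is Rezk's own proof. The paper gives no argument of its own and only cites \cite[7.2]{rezkCSS}. The argument is: apply Smith's theorem to the Reedy structure with the spine inclusions and $\Delta[0]^t\to E^t$, and identify the local objects via Rezk's computation $\Map(E^t,X)\simeq X_{heq}$ for a Segal space $X$. If you cite that computation, the argument is complete.

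One small correction. The map whose composite with $\Map(E^t,X)\to X_{heq}$ equals $s_0$ is the map $X_0\to\Map(E^t,X)$ induced by $E^t\to\Delta[0]^t$, not the restriction. The restriction is a retraction of that map, so two-out-of-three gives ``local if and only if complete''.

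The step that would fail is your sketch of Rezk's lemma.
\begin{itemize}
\item Passing from $\mathrm{sk}_{m-1}E$ to $\mathrm{sk}_mE$ attaches two $m$-simplices along their whole boundaries. The restriction maps are therefore pullbacks of the matching maps $X_m\to\Map(\partial\Delta[m]^t,X)$, and the Segal condition does not make these equivalences.
\item Concretely, $\Map((\mathrm{sk}_2E)^t,X)$ is the space of quasi-inverse data $(f,g,\ gf\simeq\id,\ fg\simeq\id)$. Its fiber over $f\colon a\to b$ in $X_{heq}$ is equivalent to $\Omega_{\id_b}\Map(b,b)$, which is not contractible in general.
\item Extending to $\mathrm{sk}_3E$ imposes a triangle identity, so in general that restriction is not even surjective on $\pi_0$.
\item $E$ has only one arrow in each direction, so it never encodes independent left and right inverses.
\end{itemize}

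A filtration that works uses the images $E^{(m)}\subset E$ of the alternating simplices $(x,y,x,\dots)\in E_m$. One checks that
\[
E^{(m+1)}=E^{(m)}\cup_{\Lambda^{m+1}_0}\Delta[m+1],
\]
a pushout along an \emph{outer} horn whose edge $\{0,1\}$ goes to $f=(x,y)$. The Segal condition alone does not control outer horns. You need the Segal-space form of Joyal's special outer horn filling: restriction along $\Lambda^{m+1}_0\subset\Delta[m+1]$ is a trivial fibration over the locus where $\{0,1\}$ is a homotopy equivalence.

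Once $(x,y,x)$ and $(y,x,y)$ are both present, $f$ is invertible. Hence for $m\ge 2$ the restrictions along $E^{(m)}\subset E^{(m+1)}$ are trivial fibrations over $X_{heq}$. This gives a trivial fibration
\[
\Map(E^t,X)\lra\Map((E^{(2)})^t,X)\times_{X_1}X_{heq}.
\]
The target is the space of an equivalence $f$ together with a left homotopy inverse. Its fibers are $\mathrm{hofib}_{\id_a}\bigl(f^*\colon\Map(b,a)\to\Map(a,a)\bigr)$, which are contractible because $f$ is an equivalence. That completes the lemma.
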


\subsection{\texorpdfstring{$n$}{n}-fold complete Segal spaces}

 An $n$-fold simplicial space, that is a functor $\Delta^{\op,\times n}\ra \sset$, have access to the other categorical directions, which make them good at modeling higher categories. 

\begin{definition}
	An \textit{$n$-fold Segal space} is a Reedy fibrant functor  $W: \Delta^{\op,\times n}\ra \sset$ such that 
	\[W_{k_1, \dots, k_{i-1+1},*,k_{i+1} \dots k_n}\] 	
	is a Segal space for all $1\leq 1 \leq n$ and such that 
	 for every $1 \leq i \leq n$, and every $k_1, \dots, k_{i-1} \geqslant 0$, the $(n-i)$-simplicial space
\[
W _{k_1, \dots, k_{i-1}, 0, *, \dots, *}
\]
is essentially constant, that is, the $(n-i)$-fold simplicial above is such that every degeneracy map
$$W_{k_1,\dots,k_{i-1},0,0,\dots,0} \lra W_{k_1,\dots,k_{i-1},0,m_{i+1},\dots,m_n}$$
is a weak equivalence for all $m$'s. This is the globularity condition: the ``objects" in direction $i$ carry no higher categorical structure in the remaining directions, just a space.
\end{definition}

\begin{definition}
	A \textit{complete $n$-fold Segal space} is an $n$-fold Segal space such that for every $1\leq i \leq n$ 
\[
W _{k_1, \dots, k_{i-1}, *, 0, \dots, 0}
\]
is a complete Segal space. 

\end{definition}

\begin{definition}
	
The \textit{homotopy category $\H(W)$ of an $n$-fold Segal space} $W_{*, \dots, *}$  is the category whose objects are the elements of the set $X_{0, \dots, 0}$. For $x, y$ two objects, consider   the pullback
	\[\begin{tikzcd}
	{\Map(x,y)} & {W_1} \\
	{\{(x,y)\}} & {W_0\times W_0.}
	\arrow["{(d_1,d_0)}", from=1-2, to=2-2]
	\arrow[from=2-1, to=2-2]
	\arrow[from=1-1, to=2-1]
	\arrow[from=1-1, to=1-2]
\end{tikzcd}\]

We call it the $(n-1)$-fold Segal space of morphisms from $x$ to $y$. Now let morphisms in $\H(W)$ from $x$ to $y$ be the set of isomorphism classes of objects in $\H\left(\Map_W(x, y)\right)$ where this object is defined inductively using Definition \ref{Homotopycatsegal}.
\end{definition}

 We define a Dwyer-Kan equivalence inductively.

\begin{definition}
 A morphism $f: X \ra Y$ of $n$-fold Segal spaces is a \emph{Dwyer-Kan equivalence} if
	 \begin{enumerate}
	\item  the induced functor $\H(f): \H(X) \ra \H(Y)$ is essentially surjective.
	\item For each pair of objects $x, y \in X_{0, \ldots, 0}$, the induced morphism 
	\[\Map_X(x, y) \lra \Map_Y(f(x), f(y))\] is a Dwyer-Kan equivalence of $( n-1 )$-fold Segal spaces.
	 \end{enumerate}
\end{definition}

\begin{theorem}\cite[7.2]{comparisonJulierezk}
	 There exists a model structure on $\CSS_n$, the category of functors  $\Delta^{\op,\times n}\ra \sset$, in which the fibrant objects are exactly the $n$-fold complete Segal spaces and whose weak equivalences between fibrant objects are the Dwyer-Kan equivalences.
\end{theorem}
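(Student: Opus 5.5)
The plan is to obtain the model structure as a left Bousfield localization of the Reedy (equivalently injective) model structure on $\Fun(\Delta^{\op,\times n},\sset)$, which is left proper, combinatorial and simplicial, so Smith's theorem (as in \cite{hirschhorn2003model}) provides the localization with respect to any \emph{set} of maps. I would localize at three families of monomorphisms between representables and their subobjects. First, the Segal spine inclusions $G(k)\hookrightarrow\Delta[k]$, placed in the $i$-th simplicial direction and tensored with representables $\Delta[k_1,\dots,k_{i-1},-,k_{i+1},\dots,k_n]$. Second, the globularity maps $\Delta[k_1,\dots,k_{i-1},0,0,\dots,0]\to\Delta[k_1,\dots,k_{i-1},0,m_{i+1},\dots,m_n]$ induced by the codegeneracies. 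Third, the completeness maps $\Delta[0]\to E$, where $E$ is the nerve of the free-living isomorphism, placed in direction $i$ with zeros in the later directions. The cofibrations remain the monomorphisms.

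Next, I would identify the fibrant objects. In a left Bousfield localization these are the Reedy fibrant $S$-local objects, and by the adjunction $\Map(A\times\Delta[k_1,\dots],W)\cong \Map(A,W_{k_1,\dots,-})$ locality with respect to each family translates exactly into the corresponding condition in the definitions. Locality for the spines says each $W_{k_1,\dots,*,\dots,k_n}$ satisfies the Segal condition; locality for the degeneracy maps gives essential constancy of $W_{k_1,\dots,k_{i-1},0,*,\dots,*}$; and locality for $E$ gives completeness, via Rezk's identification of $\Map(E,X)$ with $X_{heq}$ for a Segal space $X$. So the fibrant objects are precisely the complete $n$-fold Segal spaces.

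The remaining and hardest step is that weak equivalences between fibrant objects are exactly the Dwyer--Kan equivalences. In any left Bousfield localization, local equivalences between local objects are the underlying Reedy (levelwise) equivalences. So it suffices to show that a map $f\colon X\to Y$ of complete $n$-fold Segal spaces is a levelwise equivalence if and only if it is Dwyer--Kan. The forward direction is straightforward: levelwise equivalences induce equivalences on the homotopy pullbacks $\Map_X(x,y)$ and bijections on $\H$, because $X_1\to X_0\times X_0$ is a fibration by Reedy fibrancy.

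For the converse I would induct on $n$. The case $n=1$ is Rezk's theorem \cite{rezkCSS} that Dwyer--Kan equivalences between complete Segal spaces are levelwise equivalences. For the inductive step, the mapping objects $\Map_X(x,y)$ are complete $(n-1)$-fold Segal spaces, so by induction $f$ induces levelwise equivalences on all of them. Together with the Segal condition in the first direction, this gives equivalences $X_{k,*}\to Y_{k,*}$ over $X_0^{k+1}$ fibrewise, once we know $X_{0,*}\to Y_{0,*}$ is an equivalence. The main obstacle is this degree-$0$ statement. By globularity, $X_{0,*,\dots,*}$ is essentially constant, so it reduces to showing $X_{0,0,\dots,0}\to Y_{0,0,\dots,0}$ is an equivalence. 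Here I would use completeness in the first direction to identify $X_{0}$ with the space of homotopy equivalences in $X_{1,0,\dots,0}$. Essential surjectivity plus fully faithfulness on mapping spaces, with homotopy equivalences detected in $\H$, then gives surjectivity on $\pi_0$ and equivalences of path components, exactly as in Rezk's $n=1$ argument applied to the underlying complete Segal space $X_{*,0,\dots,0}$.
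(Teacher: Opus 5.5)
The paper does not prove this statement; it quotes it from \cite{comparisonJulierezk}, where the model structure is built (following Barwick) as a left Bousfield localization of the injective structure. Your outline is that standard route: localize, identify the local objects, then show by induction on $n$ that levelwise and Dwyer--Kan equivalences agree between local objects, with Rezk's theorem \cite{rezkCSS} as the base case. The first two steps are fine up to two bookkeeping points. First, the completeness maps must carry representables $\Delta[k_1,\dots,k_{i-1}]$ in the earlier directions, as your Segal maps do. With zeros in all other directions you would, for $i\geq 2$, only impose something already forced by globularity. Second, the maps $\Delta[k_1,\dots,k_{i-1},0,0,\dots,0]\to\Delta[k_1,\dots,k_{i-1},0,m_{i+1},\dots,m_n]$ are induced by vertex inclusions $[0]\to[m_j]$, not codegeneracies. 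This is harmless, since the face maps they detect have the degeneracies as sections.

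The one step that is not justified as written is the degree-zero step. To apply Rezk's theorem to $f_{*,0,\dots,0}$ you need essential surjectivity on $\H(Y_{*,0,\dots,0})$, whose hom-sets are $\pi_0\Map_Y(a,b)_{0,\dots,0}$. Your hypothesis is essential surjectivity on the recursively defined $\H(Y)$. Its hom-sets are isomorphism classes in $\H(\Map_Y(a,b))$, a quotient of $\pi_0\Map_Y(a,b)_{0,\dots,0}$ in which $1$-morphisms joined by an invertible $2$-morphism are identified. So $y\cong f(x)$ in $\H(Y)$ only gives $u\colon f(x)\to y$ and $v\colon y\to f(x)$ with $vu\cong\id$ and $uv\cong\id$ up to invertible higher cells. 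That is not yet a homotopy equivalence in the Segal space $Y_{*,0,\dots,0}$, so completeness in the first direction cannot be applied. Your phrase ``homotopy equivalences detected in $\H$'' is true only for the homotopy category of that Segal space.

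What you need is a lemma: for a complete $m$-fold Segal space $W$, two objects are isomorphic in $\H(W)$ if and only if they lie in the same component of $W_{0,\dots,0}$; in particular $\H(W)=\H(W_{*,0,\dots,0})$. It follows by induction on $m$. Applied to the complete $(m-1)$-fold Segal spaces $\Map_W(a,b)$, it identifies the hom-sets of $\H(W)$ with $\pi_0\Map_W(a,b)_{0,\dots,0}$. Completeness of $W_{*,0,\dots,0}$ then finishes the argument. This is exactly where completeness in directions $2,\dots,n$ must enter the degree-zero step; for non-complete mapping objects the two homotopy categories really differ. With the lemma in place, the rest goes through. Fully faithfulness at level $(0,\dots,0)$ from your inductive hypothesis, plus Rezk's theorem, gives $X_{0,\dots,0}\simeq Y_{0,\dots,0}$. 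Your fibrewise argument then completes the proof, using globularity to move fibres over arbitrary points of $X_{0,\vec m}$ to degenerate ones.
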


\section{\texorpdfstring{$G$}{ G}-monoidal structures}

In this section we define what we mean to have an $(\infty,n)$-ctageory of symmetric monoidal structure with a group action. We take the naive route of defining actions strictly which we will work out later using equivariant homotopy theory. 

\subsection{Commutative monoids and  the  category \texorpdfstring{$\Gamma$}{ Gamma}}

The ideas in this subsection are due to Segal~\cite{SEGAL1974293}. In his original work, Segal uses the category $\Gamma$, whose opposite is the category equivalent to finite pointed sets. In this paper we will not used  this category, just is opposite, so we make the convention that $\Gamma$ will already denote  the category of finite pointed sets.

\begin{definition}
    We define  $\Gamma$ to be the skeleton of the category of finite pointed sets, so   objects  are $n_+  := \{\point, 1, \dots, n\}$ and morphisms are pointed functions. 
    \end{definition}

\begin{definition}
   Let $\C$ be a category. Then a $\Gamma$-\emph{object} in $\C$ is a functor
   \[
   F: \Gamma \lra \C.
   \]
\end{definition}

The following example clarifies the intuition behind $\Gamma$-objects.

\begin{example}
    Let $M$ be a commutative monoid. We may define a $\Gamma$-set
    \[
    A_M : \Gamma \lra \set_*
    \]
    by setting
    \[
    A_M(S)  := \Hom_{\set_*}(S, M)
    \]
    where $M$ is naturally pointed by $0$, its identity element.

    For a morphism $\theta: S \ra T$ of finite pointed sets and a function of pointed sets $f: S \ra M$ we need to construct another map of pointed sets $ \theta_*(f) : T \ra M$. We define
    \[
    \theta_*(f)(t) = \begin{cases}
        0 & \text{if } t = \point \\
        \sum_{s \in \theta^{-1}(t)} f(s) & \text{if } t \neq \point.
    \end{cases}
    \]
Thus this maps sums over the pre-images of an element. Notice that the additive structure on $M$ was fundamental for the construction.

For any $n_+$ we have an identification $M(n_+) \cong M^n$.
Consider the morphism $\alpha: 2_+ \ra  1_+$ that maps both $1$ and $2$ to $1$. Then, under the identification above, $\alpha_* : M^2 \ra M$ is just the sum in the monoid.
\end{example}

The natural follow-up question is whether a given functor $A: \Gamma\ra  \set_*$ comes from a monoid. This question is the reason why we make the following definition.

\begin{definition}
   Let $X: \Gamma \ra \C$ be a functor. For any $1 \leq k \leq n$, the maps $\varphi_{n,k}: n_+ \ra 1_+$ defined by
    \[
    \varphi_{n,k}(i) = \begin{cases}
         1 & \text{if } i =k \\
         0 & \text{if } i \neq k
    \end{cases}
    \]
    induce a map $\varphi_n : X(n_+) \ra X(1_+)^n$ called the $n$-th \emph{Segal map}.
\end{definition}

\begin{example}
    Consider $A_M$ as before, let us compute its second Segal map.  The map $\varphi_{2,1}$ sends $1$ to $1$ and everything else to $0$ and $\varphi_{2,2}$ maps $2$ into $1$ and everything else to $0$. The image of these maps under $A_M$ are the components of the Segal map
    \[
\varphi_2 : \Hom_{\set_*}(2_+, M) \lra \Hom_{\set_*}(1_+,M).
    \]
If $f: 2_+ \ra M$ is in $A_M(2_+)$, then
\[
(\varphi_{2,1})_* (f) (1) = \sum_{s \in \varphi_{2,1}^{-1}(1)} f(s) = f(1).
\]
Similarly,
\[
(\varphi_{2,2})(f)(1)  = f(2).
\]
Thus, $\varphi_2$ is the natural isomorphism $A_M(2_+) \cong M^2$ appearing in the correspondence $A_M(n_+) = M^n$.
\end{example}

This  example gives us the intuition we need to understand why  we  can use $\Gamma$ to express commutative monoids.  Indeed, suppose that we have a functor $A : \Gamma \ra \set_*$such that $A(0_+) = *$ and such that all Segal maps $\varphi_n : A(n_+) \ra A(1_+)^n$ are isomorphisms, then $A \cong A_M$, where $M = A(1_+)$ and let $ \alpha_* \circ \varphi_2^{-1}: A(1_+)^2 \ra A(1_+)$ be the addition. One can check that $A_M \cong A$.

\begin{remark}
    In the example above, the target category is the category of pointed sets, but we could have used the category of sets. In fact, because $0_+$ is initial in $\Gamma$ and $A(0_+)$ is a point, all the maps induced by  $A$ preserve the base points $0_+ \ra A(S)$.
\end{remark}

\subsection{Symmetric monoidal \texorpdfstring{$n$}{n}-fold Segal spaces}

Using the idea that $\Gamma$-objects satisfying  Segal conditions encode commutative monoids in a category, we  can make the following definition. 

\begin{definition}
	 A \emph{ special $n$-fold complete Segal space} is a functor 
	\[
	A: \Gamma \lra \CSSn
	\]
    such that $A(\underline{k}_+)$ is an $n$-fold complete Segal space for all $k$  and all the Segal maps are weak equivalences of $n$-fold complete Segal spaces. Furthermore, if it is the case that $A(0_+)$ is weakly equivalent to a point, then we call it a  a \emph{symmetric monoidal $n$-fold complete Segal space}. 
\end{definition}

\begin{remark}
    Just as we did for monoids, we think of the actual monoidal $(\infty,n)$-category being the category $A(1_+)$ and the maps in $\Gamma$ encoding the monoidal product in it.  
\end{remark}

\subsection{\texorpdfstring{$G$}{G}-symmetric monoidal \texorpdfstring{$n$}{n}-fold complete Segal spaces}

In what follows $G$ is a finite group, and $BG$ its associated groupoid. 

\begin{definition}\label{GammaGAction}
    Let $\C$ be a category. A \emph{$\Gamma$-object with $G$-action} in $\C$ is a functor
    \[
    X : \Gamma \times BG \lra \C.
    \]
    We write $\rho^X_g := X(\mathrm{id}, g)$.
\end{definition}

\begin{remark}\label{BHGGamma}
    Bergner and Hackney \cite{JulieHackney} encode the same structure using a category $G\Gamma$. Its objects are
    \[
    n_G := \bigvee_{g\in G} n_g,
    \]
    and its morphisms are generated by the diagonal maps $\bigvee f$, applying the same $f : n_+ \ra m_+$ on every copy, together with the action maps $g\cdot k_h := k_{gh}$. For example, for $G = \zz/2$, $n_{\zz/2}$ is
    \[
    \{n_0,\dots, 1_0, 0, 1_1, \dots, n_1\},
    \]
    and the action swaps $k_0$ and $k_1$.

This category $G\Gamma$ is  the category obtained from $\Gamma \times BG$ by quotienting out the automorphisms of $0_+$.      Since every zero map factors through $0_+$, precomposition with $q$, the quotient functor, gives an isomorphism between functors $G\Gamma \ra \C$ with $X(0_G) = \point$ and functors $\Gamma\times BG \ra \C$ with $X(0_+) = \point$. We work with $\Gamma \times BG$ since we only require $X(0_+)$ to be weakly contractible and because it makes the techniques used later on simpler.
\end{remark}

\begin{definition}\label{GSegalMaps}
    Consider a functor $X : \Gamma \times BG \ra \C$. The \emph{Segal maps}  are the ones of the  underlying $\Gamma$-object,
    \[
    \varphi_n : X(n_+) \lra X(1_+)^n,
    \]
    with components $X(\varphi_{n,k}, e)$. Since
    \[
    (\varphi_{n,k},e)\circ(\mathrm{id},g) = (\mathrm{id},g)\circ(\varphi_{n,k},e)
    \]
    in $\Gamma\times BG$, the map $\varphi_n$ is $G$-equivariant for the diagonal action on $X(1_+)^n$.
\end{definition}

As was the case for monoids, if $M$ is a commutative monoid with a $G$-action by monoid homomorphisms, we get a functor
\[
A^G_M : \Gamma \times BG \lra \set
\]
given by $A^G_M(n_+) = \Hom_{\set_*}(n_+, M)$ on objects, by the formula for $A_M$ on morphisms of $\Gamma$, and with $g \in G$ acting by postcomposition with the action of $g$ on $M$.   The category of functors $X : \Gamma \times BG \ra \set$ such that $X(0_+) = \point$ and all Segal maps are isomorphisms is equivalent to the category of commutative monoids with a $G$-action by monoid homomorphisms.

All this discussion motivates the following definition.

\begin{definition}\label{CSSnMon}
    A \emph{$G$-symmetric monoidal $n$-fold complete Segal space} is a functor
    \[
    X : \Gamma \times BG \lra \CSS_n
    \]
    such that $X(k_+)$ is a complete $n$-fold Segal space for all $k$, $X(0_+)$ is weakly equivalent to a point and all Segal maps are equivalences of $n$-fold complete Segal spaces. Equivalently, it is a symmetric monoidal $n$-fold complete Segal space in the sense of the previous subsection together with a strict $G$-action by symmetric monoidal automorphisms.
\end{definition}

\section{Bordisms as an \texorpdfstring{$n$}{n}-fold complete Segal spaces}

We use this structure we just defined to  show that if  there is an appropriate extension of the symmetry group of the bordism category as specified in~\cite[Section 4]{FreedHopkinsReflection}, then there is an extension of $\cob_n(H_n)$, in the sense of~\cite{claudiaDamien}, to a $(\zz/2)\Gamma$-object in $n$-fold complete Segal spaces.

\subsection{The bordism category}

The basis for the construction of the $n$-fold complete Segal space of $n$-bordisms is a simpler Segal space which will be used to identify where to cut our cobordisms. To do  this, we define a category of intervals internal to topological spaces and then take the associated simplicial space.

\begin{definition}
Define the category $\mathfrak{Int^c}$ by letting objects be intervals $[a,b] \subset \rr$ and by declaring that there is a morphism $[a_0,b_0] \ra [a_1,b_1]$ if and only if $a_0 \leq a_1$ and $b_0 \leq b_1$
\end{definition}  

\begin{proposition}
    The category $\mathfrak{Int^c}$ is internal to $\top$, that is, both the set of morphisms and objects have the natural structure of a topological space and the source and target maps are continuous.
\end{proposition}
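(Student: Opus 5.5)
We must show that $\mathfrak{Int^c}$ is a category internal to $\top$. This means giving topologies on the object set and the morphism set, and checking that source, target, identity and composition are continuous maps.

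\textbf{Objects.} Identify the object set with the subspace
\[
\{(a,b)\in\rr^2 : a\leq b\}
\]
of $\rr^2$ with the Euclidean topology, via $[a,b]\mapsto(a,b)$. (If degenerate intervals are excluded, use the open region $a<b$ instead; nothing below changes.)

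\textbf{Morphisms.} Between two objects there is at most one morphism. So a morphism $[a_0,b_0]\ra[a_1,b_1]$ is determined by its source and target, and the morphism set is the subset
\[
\{((a_0,b_0),(a_1,b_1)) : a_0\leq b_0,\ a_1\leq b_1,\ a_0\leq a_1,\ b_0\leq b_1\}\subset\rr^4.
\]
Give it the subspace topology.

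\textbf{Source and target.} Under these identifications, source and target are the restrictions of the two coordinate projections $\rr^4\ra\rr^2$. These are continuous, and their restrictions to subspaces remain continuous.

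\textbf{Identities.} The identity map $\mathrm{ob}\ra\mathrm{mor}$ is the restriction of the diagonal $(a,b)\mapsto(a,b,a,b)$. Its image lies in the morphism set because $a\leq a$ and $b\leq b$, so it is continuous.

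\textbf{Composition.} The space of composable pairs is the fiber product $\mathrm{mor}\times_{\mathrm{ob}}\mathrm{mor}$. It is a subspace of $\rr^6$, cut out by the inequalities for two consecutive morphisms. Composition is the projection $(x_0,x_1,x_2)\mapsto(x_0,x_2)$, again a restriction of a linear map. Its image lands in the morphism set because $\leq$ is transitive.

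\textbf{Category axioms.} Associativity and unitality hold automatically. Each hom-set has at most one element, so any two parallel morphisms coincide.

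The only step needing any care is making sure each structure map lands in the right subspace. This reduces to reflexivity and transitivity of $\leq$, so there is no real obstacle. The closed inequalities also show that both the object space and the morphism space are closed subspaces, should that be needed later for taking nerves.
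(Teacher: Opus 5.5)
Your proposal is correct and takes the same approach as the paper: you identify objects and morphisms with subspaces of $\rr^2$ and $\rr^4$ and note that source, target and identity are restrictions of coordinate maps, and you also check composition, which the paper omits. One small caveat is that the paper uses nondegenerate intervals ($a<b$), and under that convention your closing remark fails: the object space is open in $\rr^2$, and the morphism space is closed only inside $\mathrm{ob}\times\mathrm{ob}$, not in $\rr^4$.
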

\begin{proof}  The objects of $\mathfrak{Int^c}$ are in bijection with the set $\{(a,b)  \ | \ a < b\} \subset \rr^2$ and morphisms are in bijection with the set $\{(a_0,a_1,b_0,b_1) \in \rr^2 \ | \ a_i < b_i \ \text{and } \ a_0 \leq a_1, \ b_0 \leq b_1 \} \subset \rr^4$, thus we can topologize them as subspaces. With these topologies source, target and identity maps are continuous. \end{proof}   

\begin{corollary}
    The category $\mathfrak{Int^c}$ is internal to $\sset$. 
\end{corollary}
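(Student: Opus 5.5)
The plan is to apply the singular simplicial set functor $\mathrm{Sing}\colon \top \ra \sset$ to the internal category structure from the preceding proposition. Since $\mathrm{Sing}$ is a right adjoint to geometric realization, it preserves all limits, and in particular finite products and pullbacks. A category internal to $\top$ is specified by the data
\[
\bigl(\mathrm{Ob}, \mathrm{Mor}, s, t, \mathrm{id}, \circ\bigr),
\]
where composition is a map $\mathrm{Mor}\times_{\mathrm{Ob}}\mathrm{Mor} \ra \mathrm{Mor}$. The associativity and unit axioms are commutative diagrams built from these maps and iterated fibre products.

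First, I will record that composition in $\mathfrak{Int^c}$ is continuous; the preceding proposition only mentioned source, target and identity. The pullback $\mathrm{Mor}\times_{\mathrm{Ob}}\mathrm{Mor}$ is the subspace of $\rr^6$ of triples of intervals $[a_0,b_0],[a_1,b_1],[a_2,b_2]$ with $a_0\le a_1\le a_2$ and $b_0\le b_1\le b_2$. Composition is the coordinate projection forgetting the middle interval, so it is continuous. Associativity and unitality hold on underlying sets because $\mathfrak{Int^c}$ is a poset, and hence they hold as diagrams in $\top$.

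Next, I will apply $\mathrm{Sing}$ to obtain simplicial sets $\mathrm{Sing}(\mathrm{Ob})$ and $\mathrm{Sing}(\mathrm{Mor})$ with maps $\mathrm{Sing}(s)$, $\mathrm{Sing}(t)$, $\mathrm{Sing}(\mathrm{id})$. Composition is transported using the canonical isomorphism
\[
\mathrm{Sing}(\mathrm{Mor}\times_{\mathrm{Ob}}\mathrm{Mor})\cong \mathrm{Sing}(\mathrm{Mor})\times_{\mathrm{Sing}(\mathrm{Ob})}\mathrm{Sing}(\mathrm{Mor}),
\]
which follows from limit preservation. Functoriality of $\mathrm{Sing}$ then carries the axiom diagrams to commuting diagrams in $\sset$.

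There is no real obstacle here. The only point that needs care is identifying the fibre products after applying $\mathrm{Sing}$, which is exactly the right-adjoint argument above. The result is a category internal to $\sset$, as claimed.
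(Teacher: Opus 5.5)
Your argument is correct and differs from the paper's only in the choice of functor, but that choice matters later. You apply the continuous singular complex $\mathrm{Sing}$. The paper instead notes that the object and morphism spaces sit smoothly in $\rr^2$ and $\rr^4$ with smooth structure maps, and applies the \emph{smooth} singular complex, whose $l$-simplices are smooth maps out of the extended simplex $|\Delta^l|_e$.

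Your route gives a cleaner formal justification. Because $\mathrm{Sing}$ is a right adjoint, the fibre product $\mathrm{Mor}\times_{\mathrm{Ob}}\mathrm{Mor}$ and the associativity and unit diagrams transfer automatically. You also verify continuity of composition, which the preceding proposition omits and the paper's one-line proof only asserts implicitly. And you never need smoothness, which is convenient: $\mathrm{Mor}$ is cut out by the closed conditions $a_0\le a_1$, $b_0\le b_1$, so it is really a manifold with corners rather than a manifold.

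The paper's route produces the specific model used afterwards. The simplices of $\mathrm{Int}_\bullet$ are smooth families $s\mapsto a_j(s),b_j(s)$ on $|\Delta^l|_e$. This smoothness is what makes the definition of $P\cob_n^V$ meaningful, since there $M\subset V\times B(\bar I)$ must be a submanifold with properness and submersion conditions over $|\Delta^l|_e$.

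So your proof establishes the corollary as stated, and the two models agree homotopically: both are levelwise contractible, because all the spaces of chains of intervals are convex. To obtain the object that feeds into the bordism construction, you would replace $\mathrm{Sing}$ by smooth singular simplices into the ambient Euclidean spaces. The fibre products are then preserved by a direct coordinatewise check rather than by the adjunction.
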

\begin{proof} Note that both the space of morphisms and the space of objects are smooth manifolds and that all structure maps are smooth, thus we can get a simplicial set by considering the smooth singular complexes of these spaces. \end{proof}  

\begin{lemma}
A category internal to $\sset$ induces a simplicial space. 
\end{lemma}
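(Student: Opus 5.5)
The plan is to take the nerve of the internal category levelwise, using the internal composition to produce the simplicial structure maps. Let $\C$ be a category internal to $\sset$, with object simplicial set $\C_0$, morphism simplicial set $\C_1$, source and target maps $s,t\colon \C_1 \ra \C_0$, identity map $e\colon \C_0 \ra \C_1$, and composition $c\colon \C_1 \times_{\C_0} \C_1 \ra \C_1$. All of these are maps of simplicial sets satisfying the usual associativity and unit identities, which are commutative diagrams in $\sset$. I would define the \emph{internal nerve} $N\C\colon \Delta^\op \ra \sset$ by
\[
(N\C)_0 = \C_0,\qquad (N\C)_k = \C_1 \times_{\C_0} \C_1 \times_{\C_0} \dots \times_{\C_0} \C_1 \quad (k \text{ factors}),
\]
where the iterated fibre product is formed in $\sset$ using $t$ on the left factor and $s$ on the right factor.

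The structure maps are the internal versions of the nerve formulas:
\begin{itemize}
\item the face $d_0$ drops the first factor, and $d_k$ drops the last factor;
\item the inner faces $d_i$, for $0<i<k$, apply $c$ to the $i$-th and $(i+1)$-st factors;
\item the degeneracy $s_i$ inserts $e$ applied to the appropriate object coordinate;
\item for $k=1$, the faces are $d_0=t$ and $d_1=s$ (or the reverse, matching the convention of the paper's pullback), and $s_0=e$.
\end{itemize}
Each of these maps is well-defined because it is induced by the universal property of the pullback in $\sset$.

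A cleaner way to organize the same construction, and avoid checking identities by hand, is to work one simplicial degree at a time. Fix $m$ and take $m$-simplices: $(\C_0)_m$ and $(\C_1)_m$ form an honest small category $\C^{(m)}$, because limits in $\sset$ are computed levelwise and so the internal category axioms hold in each degree $m$. Then set $(N\C)_{k,m} := N(\C^{(m)})_k$, the ordinary nerve of this category. A map $[m]\ra[m']$ in $\Delta$ induces a functor $\C^{(m')}\ra\C^{(m)}$, since the structure maps of $\C$ are simplicial. Functoriality of the ordinary nerve, as in the nerve example earlier in the paper, then gives a bisimplicial set $\Delta^\op\times\Delta^\op\ra\set$. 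By the remark in the paper identifying bisimplicial sets with simplicial spaces, this is a simplicial space. Finally, I would note that levelwise $N(\C^{(m)})_k$ is exactly the $m$-simplices of the iterated fibre product above, so the two descriptions agree.

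The only real point to verify is the simplicial identities, which is the main obstacle, though it is routine. With the levelwise approach they come for free from the ordinary nerve. What remains is the naturality in $m$, which holds because every structure map is a map of simplicial sets and fibre products in $\sset$ are computed levelwise. I do not expect Segal or fibrancy conditions to be part of this statement. They would only appear later, for example because the Segal maps are isomorphisms by construction, as in the example on nerves of categories.
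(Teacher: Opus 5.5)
Your proposal is correct and is essentially the paper's argument: the paper also observes that the $n$-simplices of objects and morphisms form an honest category for each simplicial degree $n$, so that $\C$ becomes a functor $\Delta^\op\ra\cat$, and then composes with the nerve $\n\colon\cat\ra\sset$. The differences are cosmetic. The paper's composite puts the nerve direction in the space coordinate, which is what the swap in the definition of $\mathrm{Int}_\bullet$ later corrects, whereas your $(N\C)_{k,m}$, read with $k$ as the simplicial index, is already the internal nerve. You also check the composition and unit axioms levelwise, which the paper leaves implicit.
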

\begin{proof} If   \[ \C = \begin{tikzcd}
	X & Y
	\arrow["id"{description}, from=1-1, to=1-2]
	\arrow["t", curve={height=-9pt}, from=1-2, to=1-1]
	\arrow["s"', curve={height=9pt}, from=1-2, to=1-1]
\end{tikzcd}\] is a category internal to $\sset$, then for $n\geq 0$ 
 \[\C_n =  \begin{tikzcd}
	X_n & Y_n
	\arrow["id_n"{description}, from=1-1, to=1-2]
	\arrow["t_n", curve={height=-9pt}, from=1-2, to=1-1]
	\arrow["s_n"', curve={height=9pt}, from=1-2, to=1-1]
\end{tikzcd}\] is a category and thus $\C $ corresponds to a functor  $\C: \Delta^\op \ra \cat$ given by $[n] \mapsto \C_n$. To get a simplicial space, we can consider the composition 
\[
\Delta^\op \xrightarrow{\C} \cat \xrightarrow{\n} \sset 
\]
where $\n$ is the nerve functor. \end{proof} 

\begin{definition}
    The \textit{simplicial space of intervals} $\mathrm{Int}_\bullet$ is the simplicial space  corresponding to 
\[
\Delta^\op \times \Delta^\op \xrightarrow{swap} \Delta^\op \times \Delta^\op \xrightarrow{\mathfrak{Int}^c} \set 
\]
    
\[
\mathrm{Int^n}_{\bullet,\dots,\bullet} : \left(\Delta^\op\right)^{n} \lra \sset
\]
by 
\[
\mathrm{Int^n}_{\bullet,\dots,\bullet}([k_1],\dots,[k_n]) = \mathrm{Int}_\bullet([k_1])\times \dots \times  \mathrm{Int}_\bullet([k_n]).
\]
\end{definition}

\begin{remark}
    In the definition of $\mathrm{Int}_\bullet$ we have the swap map so that the composition in the corresponding Segal space corresponds to the composition of intervals. 
\end{remark}

\begin{proposition}\cite[4.9]{claudiaDamien}
    The  functor $\mathrm{Int^n}_{\bullet,\dots,\bullet}$ is a levelwise contractible $n$-fold complete Segal space.
\end{proposition}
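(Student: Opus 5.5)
Since $\mathrm{Int^n}$ is defined as an external product $\mathrm{Int}_\bullet\times\cdots\times\mathrm{Int}_\bullet$, with the $i$-th factor depending only on $[k_i]$, my plan is to reduce everything to the single simplicial space $\mathrm{Int}_\bullet$. I will show that $\mathrm{Int}_\bullet$ is a levelwise contractible complete Segal space, and that each of its levels $\mathrm{Int}_k$ is weakly contractible. The first step is to identify the levels explicitly. By construction, the $k$-th space $\mathrm{Int}_k$ (after the swap) is the nerve in the ``internal'' direction of the space of $k$-simplices of $\mathfrak{Int^c}$. Concretely, it is the smooth singular complex of the space of composable strings
\[
[a_0,b_0]\ra[a_1,b_1]\ra\cdots\ra[a_k,b_k],
\]
which is the subspace $\{a_0\le\cdots\le a_k,\ b_0\le\cdots\le b_k,\ a_i<b_i\}\subset\rr^{2k+2}$. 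This set is convex: the defining conditions are preserved by convex combinations, since $a_i<b_i$ is preserved. It is also nonempty. So it is contractible, and its smooth singular complex is a contractible Kan complex.

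With every level contractible, the remaining conditions become almost formal. \emph{Reedy fibrancy:} I would check that the matching maps $\mathrm{Int}_k\ra M_k\mathrm{Int}$ are Kan fibrations. For a simplicial space whose levels are singular complexes of convex subsets of $\rr^{2k+2}$, I expect the matching objects to be singular complexes of convex sets as well, or at least contractible Kan complexes. The matching maps are then maps between contractible Kan complexes induced by linear projections. I expect this to be the main technical obstacle, because the boundary of the matching object is a limit over faces and one must verify that this limit is contractible and that the map to it has the lifting property. What I would try first: the matching object at level $k$ is the space of tuples of compatible faces, and these are determined by subsets of the coordinates. So it is again the singular complex of a convex polyhedral region, and the matching map is the restriction of a coordinate projection with convex fibers. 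Convex fibers let one lift simplices by straight-line (affine) extension, which gives the Kan fibration property. Alternatively, since all levels are contractible and the claim is only up to the model structure, one could quote \cite[4.9]{claudiaDamien} for this point. If needed, one can also Reedy-fibrantly replace without changing anything else.

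\emph{Segal, globularity and completeness:} the Segal map $\mathrm{Int}_k\ra\mathrm{Int}_1\times_{\mathrm{Int}_0}\cdots\times_{\mathrm{Int}_0}\mathrm{Int}_1$ is in fact an isomorphism, because $\mathfrak{Int^c}$ is a category, so composable strings are exactly iterated pullbacks. In any case both sides are contractible. Completeness holds because $\mathrm{Int}_{heq}$ is a union of components of $\mathrm{Int}_1$, which is contractible and contains $s_0(\mathrm{Int}_0)$, so $\mathrm{Int}_{heq}=\mathrm{Int}_1$ and $s_0:\mathrm{Int}_0\ra\mathrm{Int}_1$ is a map of contractible spaces. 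Finally, for the $n$-fold object, Reedy fibrancy, the Segal conditions and completeness in each direction $i$ follow from the one-dimensional case, because the other factors are fixed contractible Kan complexes and products with them preserve all these properties. Globularity holds because every degeneracy map is a map between contractible spaces, hence a weak equivalence.
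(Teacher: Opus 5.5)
\textbf{The step that fails is Reedy fibrancy.} This is not a technical difficulty: $\mathrm{Int}_\bullet$ as defined is \emph{not} Reedy fibrant, so no lifting argument can establish it.

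The matching map at level $1$ is $(d_1,d_0)\colon\mathrm{Int}_1\to\mathrm{Int}_0\times\mathrm{Int}_0$, $(I_0\le I_1)\mapsto(I_0,I_1)$. Its image consists only of ordered pairs. A Kan fibration must lift along $\Lambda^0[1]\subset\Delta[1]$. The edge $s\mapsto([s,s+1],[0,1])$ of $\mathrm{Int}_0\times\mathrm{Int}_0$ starts at the image of $s_0([0,1])$, but it has no lift, because $[s,s+1]\le[0,1]$ fails for $s>0$. Your ``convex fibers'' argument overlooks that the fibers over the complement of the image are empty. In general, a map between contractible Kan complexes need not be a fibration.

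For $n\ge 2$ there is a second, independent failure. In the Reedy structure on functors $\Delta^{\op,\times n}\to\sset$, the matching map of $\mathrm{Int}^n$ at $(1,0,\dots,0)$ is
\[
\mathrm{Int}_1\times\mathrm{Int}_0^{n-1}\to(\mathrm{Int}_0\times\mathrm{Int}_0^{n-1})^2 ,
\]
which is $(d_1,d_0)$ times the diagonal of $\mathrm{Int}_0^{n-1}$. So, contrary to your last paragraph, taking products with contractible Kan complexes does not preserve Reedy fibrancy.

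Neither of your fallbacks closes the gap. Quoting \cite[4.9]{claudiaDamien} is circular. A Reedy fibrant replacement proves the claim for a different, levelwise equivalent object, not for $\mathrm{Int}^n$ itself.

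The missing fibrancy also breaks your completeness sentence as written. With the strict fibers in the paper's definition of mapping spaces, $\Map([1,2],[0,1])=\varnothing$. Hence the morphism $[0,1]\to[1,2]$ is not an equivalence, and the equivalences do not form a union of components of the connected space $\mathrm{Int}_1$.

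\textbf{What is true, and how your argument fits.} The paper gives no argument beyond the citation. The cited result holds in the homotopy-invariant convention of \cite{claudiaDamien}, where $n$-fold Segal spaces are not required to be Reedy fibrant and the Segal conditions use homotopy fiber products. In that setting, your argument with the Reedy fibrancy discussion deleted is the whole proof:
\begin{itemize}
\item each level is weakly contractible by convexity, so the Segal maps and the globularity degeneracies are maps between contractible spaces;
\item mapping spaces, being homotopy fibers of $(d_1,d_0)$, are contractible, so every morphism is an equivalence;
\item the completeness map $s_0\colon\mathrm{Int}_0\to\mathrm{Int}_1$ is again a map between contractible spaces.
\end{itemize}
Working only with weak contractibility also removes the need for your unargued claim that the smooth singular complex of this non-open convex region is Kan. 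If you insist on the paper's Reedy-fibrant definitions, the correct statement concerns a Reedy fibrant replacement of $\mathrm{Int}^n$. That replacement is still levelwise contractible, and the same formal argument applies to it.
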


\begin{remark}
    By definition, a $0$-simplex of $\mathrm{Int}_k$ is a sequence of  intervals $I_0 \leq I_1 \leq \dots \leq I_k$, where we write $I = [a,b] \leq [a',b'] = I'$ if $a\leq a'$ and $b \leq b'$. An $l$-simplex consists of smooth maps
\[
|\Delta^l|_e \rightarrow \mathbb{R},  \ \ \  s \mapsto a_j(s), b_j(s)
\]
for $j=0, \dots, k$ such that for every $s \in |\Delta^l |_e$, the following inequalities hold:
\[
\begin{aligned}
a_i(s)<b_i(s), & \text { for } i=0, \dots, k \\
a_{i-1}(s) \leq a_i(s), & \text { and } \\
b_{i-1}(s) \leq b_i(s) & \text { for } i=1, \dots, k
\end{aligned}
\]
Here $|\Delta^l |_e$ represents the extended standard simplex. We denote an $l$-simplex of $\mathrm{Int}_k$ by $\left(I_0(s) \leq \cdots \leq I_k(s)\right)_{s \in |\Delta^l|_e}$.

\end{remark}

\begin{definition}
Fix $k \geqslant 0$. The map of spaces
\[
 B: \operatorname{Int}^n_k \longrightarrow \operatorname{Int}^n_0 \\
\]
defined by 
\[
\underline{\overline{I}} = (I^i_0 \leq \dots \leq I^i_{k_i})_{1\leq i \leq n} \mapsto B(\underline{\overline{I}})=B(\underline{a}^i(s), \underline{b}^i(s))_{s \in |\Delta^l|_e}= \prod_i\left(a^i_0(s), b^i_k(s)\right)_{s \in |\Delta^l|_e}
\]
is called the boxing map. We see $B(\underline{\overline{I}}(s))_{s \in |\Delta^l|_e}$ embedded in $\rr^n \times |\Delta^l|_e$ as $\bigcup_{s \in |\Delta^l|_e} B(\underline{\overline{I}}(s)) \times \{s\}$.
\end{definition}

In what follows,  $V$  denotes a finite dimensional real vector space. Such  a vector space can be used to construct the $n$-fold complete Segal spaces of bordisms. 

		
		

\begin{definition} An $l$-simplex of the simplicial set $(P\cob_n^V)_{k_1, \dots, k_n}$ consists of tuples $(M, \underline{\overline{I}}(s)=\left(I_0^i(s) \leq \dots \leq I_{k_i}^i(s)\right)_{s \in|\Delta^l|_e}$ such that

\begin{enumerate}

\item $\underline{\overline{I}}(s)=\left(I_0^i(s) \leq \cdots \leq I_{k_i}^i(s)\right)_{1 \leq i \leq n} $ is an $l$-simplex in $\operatorname{Int}_{k_1, \dots, k_n}^n$,

\item  $M$ is a closed and bounded $(n+l)$-dimensional submanifold of $V \times B(\underline{\overline{I}}(s))_{s \in|\Delta^l|_e} \subset V \times \mathbb{R}^n \times|\Delta^l|_e$ such that 

(a) the composition $\pi: M \hookrightarrow V \times B(\underline{\bar{I}}(s))_{s \in|\Delta^l|_e} \rightarrow B(\underline{\bar{I}}(s))_{s \in|\Delta^l|_e}$ of the inclusion with the projection is proper,

(b) its composition with the projection onto $|\Delta^l|_e$ is a submersion $M \rightarrow |\Delta^l|_e$ which is trivial outside $|\Delta^l| \subset|\Delta^l|_e$, and

\item for every $S \subseteq\{1, \dots, n\}$, let $p_S: M \xrightarrow{\pi} B(\underline{\bar{I}}
(s))_{s \in|\Delta^l|_e} \subset \mathbb{R}^n \times|\Delta^l|_e \xrightarrow{\pi_S} \mathbb{R}^S \times|\Delta^l|_e$ be the composition of $\pi$ with the projection $\pi_S$ onto the $S$-coordinates. Then for every $1 \leq i \leq n$ and $0 \leq j_i \leq k_i$, at every $x \in p_{\{i\}}^{-1}\left(\bigcup_{s \in|\Delta^l|_e} I_{j_i}^i(s) \times\{s\}\right)$, the map $p_{\{i, \dots, n\}}$ is submersive.
	
\end{enumerate}
\end{definition}

\begin{proposition}\cite[5.19]{claudiaDamien}
	The simplicial sets  $(P\cob_n^V)_{k_1, \dots, k_n}$ assemble into an $n$-fold Segal space $P\cob_n^V$.
\end{proposition}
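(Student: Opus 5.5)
The plan is to follow the structure of the Calaque--Scheimbauer argument: first assemble the simplicial sets into an $n$-fold simplicial space, then check Reedy fibrancy, then the Segal conditions, and finally globularity. I would do this in four steps.

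\emph{Functoriality.} For a map $([m_1],\dots,[m_n]) \to ([k_1],\dots,[k_n])$ in $\Delta^{\times n}$, the map $\mathrm{Int}^n_{k_\bullet} \to \mathrm{Int}^n_{m_\bullet}$ selects or repeats intervals. On an $l$-simplex $(M,\underline{\overline{I}}(s))$ I send $M$ to its restriction over the new box $B(\underline{\overline{I}}'(s))$, which sits inside the old box. Conditions (1) and (2)(a)--(b) survive because restricting to an open subset preserves properness over the smaller box and preserves being a submersion. Condition (3) survives because the new intervals are among the old ones. The simplicial direction in $l$ is handled by pulling back along affine maps $|\Delta^{l'}|_e \to |\Delta^{l}|_e$. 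Submersions are stable under this pullback, so the family over the new simplex is again a manifold with the required properties. Associativity and unitality follow since everything is given by restriction and pullback.

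\emph{Reedy fibrancy.} I would show that the matching maps are Kan fibrations via the forgetful map to $\mathrm{Int}^n$. Concretely, given a horn of families together with an extension of the interval data, I extend $M$ over $|\Delta^l|_e$. The submersion $M \to |\Delta^l|_e$ is trivial outside $|\Delta^l|$, so the family can be extended by pulling back along a retraction of $|\Delta^l|_e$ onto the horn, reparametrizing the intervals compatibly. Alternatively, I would quote that the smooth-singular construction yields Kan complexes and apply the same retraction argument levelwise for the matching objects.

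\emph{Segal condition.} In direction $i$ I need $(P\cob_n^V)_{\dots,k,\dots} \to (P\cob_n^V)_{\dots,1,\dots}\times_{(P\cob_n^V)_{\dots,0,\dots}} \cdots$ to be a weak equivalence. The key observation is that this map is in fact an isomorphism onto the subspace of tuples whose consecutive intervals overlap appropriately. Gluing is possible because condition (3) guarantees $M$ is a product near the cut intervals $I^i_j$, so pieces defined over overlapping boxes glue uniquely. Composable tuples whose intervals do not overlap are then handled by showing the inclusion of the overlapping part is a weak equivalence. For this I use a deformation in the interval coordinates that stretches the pieces via diffeomorphisms of $\rr$ that are fibrewise product-like over the intervals. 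This mirrors the levelwise contractibility of $\mathrm{Int}^n$ from \cite[4.9]{claudiaDamien}, together with its Segal property.

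\emph{Globularity and the main obstacle.} Essential constancy in the remaining directions follows because, when $k_i = 0$, condition (3) forces $p_{\{i,\dots,n\}}$ to be submersive over the whole box. Hence $M$ is a product in those directions, and stretching intervals gives a homotopy inverse to degeneracy. I expect the main obstacle to be the Segal step. Constructing these stretching homotopies continuously in families over $|\Delta^l|_e$, while preserving properness and the submersivity conditions, is the technical heart of the argument, and it requires a careful choice of smooth rescaling maps depending smoothly on $s$.
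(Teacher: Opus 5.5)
The paper gives no argument for this proposition and defers to Calaque--Scheimbauer. The step in your outline that cannot be carried out is Reedy fibrancy, because $P\cob^V_n$ is \emph{not} Reedy fibrant. Write $X=P\cob^V_n$. In multidegree $(1,0,\dots,0)$ the matching map is $(d_1,d_0)\colon X_{1,0,\dots,0}\to X_{0,\dots,0}\times X_{0,\dots,0}$, and its target imposes no relation between the two intervals. Start at the vertex $(\emptyset,[0,1]\le[0,1])$. Take the $1$-simplex of the target that is $(\emptyset,[0,1])$ in the first factor and $(\emptyset,[-s,1-s])$ in the second. A lift would need $[0,1]\le[-s,1-s]$ for $s>0$, which is false. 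With $M=\emptyset$ the same example shows that $\mathrm{Int}^n$ is not Reedy fibrant either. So the ``extension of the interval data'' you take for granted need not exist.

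Freezing the intervals does not rescue the argument. In degree $2$ the matching map is injective, since the inner face already remembers $M$. If $b_0<a_2$, a path in the matching object leaves its image as soon as the inner face is deformed over the gap $(b_0,a_2)$ away from the union of the outer faces. Independently, retractions of $|\Delta^l|_e$ onto a horn are not smooth, so pulling back along them does not produce families of submanifolds.

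The proposition can therefore only hold with the Segal condition formulated via homotopy fibre products and without Reedy fibrancy. That is also the only reading under which $\mathrm{Int}^n$ is an $n$-fold Segal space. Reedy fibrancy enters only through the fibrant replacement that the paper performs anyway when completing.

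Consequently your Segal step treats a non-issue and omits the real one. In the strict iterated fibre product, the shared face fixes the common interval $I^i_j$ and the manifold over the nonempty open box $\mathring{I}^i_j$. Any two pieces then agree on the overlap of their boxes by chaining these identifications. Their union is an $l$-simplex of $X$ because (2a), (2b) and (3) are local conditions. So the Segal map is an isomorphism onto the strict fibre product. Condition (3) is merely inherited by the union; it is not what makes the gluing possible. No composable tuples with non-overlapping intervals occur.

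What actually has to be proved is that this strict fibre product is a homotopy fibre product, and this is not formal. If your stretching was meant to do this, it cannot: paths in $X_{\dots,0,\dots}$ move the manifold, not just the intervals. Already for $n=1$ the outer face map $d_0\colon X_1\to X_0$ is not a Kan fibration. A path in $X_0$ can turn the manifold over $\mathring{I}_1$ into a topologist's-sine-curve near its left end, with amplitude switched on at $s=0$. A bordism with $a_0<a_1$ then cannot be transported along that path. Your proposal gives no argument for this comparison.

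A rescaling homotopy is genuinely needed in the globularity step. There the input is that submersivity of $p_{\{i,\dots,n\}}$ on all of $M$, which is forced by $k_i=0$, gives submersivity of $p_{\{i',\dots,n\}}$ for every $i'>i$. Hence every interval in a later direction is an admissible cut. Shrinking $I^{i'}_1,\dots,I^{i'}_{m_{i'}}$ onto $I^{i'}_0$ while restricting $M$ then deforms the identity into the face map followed by the degeneracy, as required.
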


\begin{definition}\cite[5.25]{claudiaDamien}
Fix  some countably infinite-dimensional real vector space, say $\rr^{\infty}$. Let $P\cob_n$ be the colimit $\colim_{V \subset \rr^\infty} P\cob_n^V$. The  $n$-fold complete Segal space of $n$-bordisms $\cob_n$ is the completion of $P\cob_n$ to a $n$-fold complete Segal space.
\end{definition}

\subsection{The \texorpdfstring{$\Gamma$}{Gamma}-object structure}

Now that we have category of bordisms we need to show that it is in fact a symmetric monoidal $n$-fold complete Segal space in the sense of Definition~\ref{CSSnMon}.

\begin{definition}\label{def:PBordm}
Let $V$ be a finite-dimensional vector space. For every $k_1, \dots, k_n$, let $(P\cob_n^V\langle m\rangle)_{k_1, \dots, k_n}$ be the collection of tuples
\[
(M_1, \dots, M_m,(I_0^i \leq \dots \leq I_{k_i}^i)_{i=1, \dots, n}),
\]
where each $(M_\beta,(I_0^i \leq \dots \leq I_{k_i}^i)_{i=1, \dots n})$ is an element of $(P\cob_n^V)_{k_1, \dots, k_n}$ and $M_1, \dots, M_m$ are disjoint. 

It can be shown that  simplicial sets $P\cob_n^V\langle m\rangle)_{k_1, \dots, k_n}$ assemble into an $n$-fold Segal space. Moreover, similarly to the definition of $\cob_n$, we can take the colimit over all $V \subset \mathbb{R}^{\infty}$ and complete to get an $n$-fold complete Segal space $\cob_n\langle m\rangle$.
\end{definition}

With this definition we can state the following  theorem. 

\begin{theorem}\cite[7.2]{claudiaDamien}\label{thm:Gammaobject}
Let
\[
\Gamma  \lra \CSS_n \] 

\[
\langle m\rangle  \mapsto {P\cob_n\langle m\rangle},
\]

where to a morphism $f:\langle m\rangle \rightarrow\langle k\rangle$ in $\Gamma$ we assign the map

\[P\cob_n\langle m\rangle  \longrightarrow P\cob_n\langle k\rangle\]

\[
\left(M_1, \ldots, M_m, I^{\prime} s\right)  \longmapsto\left(\coprod_{\beta \in f^{-1}(1)} M_\beta, \ldots, \coprod_{\beta \in f^{-1}(k)} M_\beta, I^{\prime} s\right).
\]

Then, the postcompsistion of this functor with the completion functor $R: \CSS_n \ra \CSS_n$ assignment is functorial and endows $\cob_n$ with a symmetric monoidal structure.
\end{theorem}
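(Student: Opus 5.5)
We need to show that $\langle m\rangle \mapsto R(P\cob_n\langle m\rangle)$ is a functor $\Gamma \ra \CSS_n$, that each value is an $n$-fold complete Segal space, that $\cob_n\langle 0\rangle$ is contractible, and that the Segal maps are Dwyer--Kan equivalences.

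\textbf{Strict functoriality.} This holds before completion. For $f:\langle m\rangle\ra\langle k\rangle$ the assignment $(M_1,\dots,M_m,I's)\mapsto(\coprod_{f^{-1}(j)}M_\beta)_j$ is well defined because of three facts:
\begin{itemize}[label=--]
\item the $M_\beta$ are disjoint, so each union is again a closed bounded submanifold of $V\times B(\underline{\overline I})$;
\item properness and the submersion conditions (2b) and (3) are local, so they hold on a disjoint union;
\item preimages of distinct $j$ are disjoint, so the output tuple is again disjoint.
\end{itemize}
Elements indexed by $f^{-1}(\ast)$ are discarded. Compatibility with face and degeneracy maps (restriction and reindexing of intervals) is clear, since those act on the $I$'s and cut the manifolds independently of the indexing. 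Composition is strict: $\coprod_{g^{-1}(j)}\coprod_{f^{-1}(i)}M_\beta=\coprod_{(gf)^{-1}(j)}M_\beta$ as subsets. The construction commutes with the colimit over $V\subset\rr^\infty$. Postcomposing with a functorial completion $R$ (a fibrant replacement in the model structure of \cite[7.2]{comparisonJulierezk}, chosen functorially, e.g.\ by the small object argument) then gives a functor $\Gamma\ra\CSS_n$ whose values are $n$-fold complete Segal spaces.

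\textbf{The unit.} $P\cob_n\langle 0\rangle$ has exactly one manifold datum (the empty tuple), so it is $\mathrm{Int}^n$. By \cite[4.9]{claudiaDamien} this is levelwise contractible, hence weakly equivalent to a point.

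\textbf{The Segal condition.} This is the main step. Since $R$ preserves weak equivalences and finite products up to weak equivalence, it suffices to show that
\[
P\cob_n\langle m\rangle\lra P\cob_n^{\times_{\mathrm{Int}^n} m}
\]
becomes an equivalence after completion. I would show it is already a levelwise weak equivalence before completion. The map is an inclusion: it includes disjoint $m$-tuples into all $m$-tuples sharing the same intervals, which sit in the fiber product over $\mathrm{Int}^n$, and that fiber product is equivalent to the honest product because $\mathrm{Int}^n$ is contractible.

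To see the inclusion is a weak equivalence, I would use a general-position argument in the $\rr^\infty$ direction. Given a family of $m$ bordisms parametrized by $\Delta^l$ with $\partial\Delta^l$ mapping into disjoint tuples, translate $M_\beta$ into the $\beta$-th of $m$ mutually orthogonal new copies of $\rr^N$ inside $\rr^\infty$. The straight-line homotopy $V\ra V\oplus\rr^N$ keeps each $M_\beta$ embedded and keeps the conditions on $B(\underline{\overline I})$ intact, because only the $V$-coordinates move. At the end of the homotopy the components are disjoint. Relative to the boundary, disjointness is preserved throughout, since moving separate components into orthogonal directions never creates intersections. This gives the lifting, and hence vanishing of the relative homotopy groups, in each multidegree $(k_1,\dots,k_n)$.

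I expect the main obstacle to be carrying this out carefully on $l$-simplices with the extended simplex $|\Delta^l|_e$ and the "trivial outside $|\Delta^l|$" condition. I would handle it by running the homotopy through smooth families of embeddings, reparametrized to be constant near the ends. Once that is done, the Segal maps are Dwyer--Kan equivalences and $\cob_n$ is symmetric monoidal.
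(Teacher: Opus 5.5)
The paper gives no proof of this statement; it quotes Calaque--Scheimbauer. Most of your outline is sound: the strict functoriality, the identification $P\cob_n\langle 0\rangle=\mathrm{Int}^n$, and the separation step. In that step, translating $M_\beta$ by $t\,w_\beta$ with $w_\beta\neq0$ in the $\beta$-th new orthogonal summand makes the tuple disjoint for every $t>0$ (compare $\beta$-th coordinates), and at $t=0$ it is disjoint over $\partial\Delta^l$ by hypothesis.

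The gap is the sentence asserting that $P\cob_n^{\times_{\mathrm{Int}^n}m}\to P\cob_n^{\times m}$ is an equivalence \emph{because} $\mathrm{Int}^n$ is contractible. A strict fiber product over a contractible base agrees with the product only if it is a homotopy pullback. That would hold, for example, if $(P\cob_n)_{k_1,\dots,k_n}\to\mathrm{Int}^n_{k_1,\dots,k_n}$ were a Kan fibration, but it is not: its fibers change homotopy type. Already for $n=1$, $k=1$:
\begin{itemize}
\item Over $[0,1]\le[2,3]$ the fiber contains arbitrary bordisms over $(1,2)$, e.g.\ a circle as an endomorphism of $\emptyset$.
\item Over $[0,2]\le[1,3]$ every point of $M$ lies over $I_0\cup I_1$. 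Condition (3) then makes $M\to(0,3)$ a proper submersion, so $M$ is a cylinder.
\end{itemize}
The $m$ factors of $P\cob_n^{\times m}$ carry independent intervals and boxes. Aligning them is the real technical point of the Segal condition, more than the bookkeeping with $|\Delta^l|_e$ that you flagged.

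To close the gap, shrink each $I^i_j$ by linear interpolation onto a small interval around
\[
c^i_j=a^i_j+\tfrac{j+1}{k_i+2}\bigl(b^i_j-a^i_j\bigr),
\]
restricting $M$ to the shrinking box. Three facts make this work:
\begin{itemize}
\item $c^i_0<\dots<c^i_{k_i}$ even when some intervals coincide;
\item linear interpolation preserves the ordering $\le$;
\item shrinking intervals only weakens condition (3).
\end{itemize}
So this deforms the product (factorwise) and the fiber product (using the common intervals) into the locus where the intervals in each direction are pairwise disjoint.

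That locus of $\mathrm{Int}^n$ is convex. Over it the projection is a fibration: transport manifolds along diffeomorphisms of the boxes in the $\rr^n$-coordinates that carry one configuration of this fixed combinatorial type to another. Only over this locus does contractibility of the base give fiber product $\simeq$ product. Combined with your separation argument, this yields the Segal condition.
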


\section{Bordisms with extra structure} 

When dealing with bordisms categories one often wants to consider extra structure on the manifolds and bordisms, for instance, the Cobordism Hypothesis is about \emph{framed manifolds and bordisms}. In this section we define such structures.

 \subsection{Tangent structures}

The structures we are interested in are \emph{tangent structures}, that is, geometrical data that we can endow the tangent bundle of a manifold with. The prototypical example of such structure is \emph{orientation}, which simply consists a smooth and compatible choice of orientation for each tangent space of the manifold. As we know, not all manifolds admit an orientation and the same is true for the structure. For instance, a \emph{framing} of a manifold is a choice of a trivialization of the tangent bundle, which is very restrictive. A common way to encode such information in a more concise and more amenable to homotopy theory is to use the language of classifying spaces and reduction of structure groups of a bundle, and that is the route we follow.

\begin{definition}
Given a principal $G$-bundle $ P \to M$ and a left action of $G$ on manifold $F$,  the \emph{associated bundle}  with fiber $F$ is
\[P \times_G F := (P \times F)/G\]
where $G$ acts on $P \times F$ by
\[g \cdot (p, f) = (pg^{-1}, gf).\]

If $\rho: H \ra G$ is a morphism of Lie groups, then $H$ acts on $G$. Thus, if $Q\ra M$ a principal $H$-bundle, we can form associated bundle which in this case is actually  principal $G$-bundle $Q_\rho$ given by 
\[
\rho(Q) := (Q \times G)/H
\]
where $H$ acts freely on $Q\times G$ by
\[(q,g)\cdot h = (q\cdot h, \rho(h)^{-1}\cdot h).\]
The action of $G$ on $Q_\rho$ is given by   
\[
[q,g]\cdot k := [q,gk].\]
\end{definition}

\begin{definition}
	Let $P$ be a principal $G$-bundle and $\rho : H \ra G$ a morphism of Lie groups. Then a \emph{reduction of the structure group} of $P$ to $H$ is pair $(Q,\phi)$ where $Q$ is a principal $H$ bundle and $\phi$ is an isomorphism  
	\[
\phi : \rho(Q) \xrightarrow{ \ \sim \ } P
	\]
	of principal $G$-bundles. 
\end{definition}

Every manifold $M$ has an associated principal $O(n)$-bundle, the \emph{orthonormal frame bundle} $\FR(M)$. We can always assume that $M$ has a metric, and with that we can construct a bundle $\FR(M) \ra M$ where the fiber of $p \in M$ is the space of all orthonormal bases of $T_pM$, and the action of $O(n)$ comes from composing a bases with an orthogonal map. If we let $O(n)$ act of $\rr^n$ in the natural, we get an associated $O(n)$-bundle $\rr^n\times_{O(n)} \FR(M)$ and this bundle is isomorphic to $TM$. This motivates the following definition.

\begin{definition}
	Let $\rho: G \ra O(n)$ be a morphism of Lie groups and let $M$ be a manifold. Then a\emph{ $G$-structure} on $M$ is a reduction of  the structure group  of $\FR(M) \ra M$ to $G$, where $\FR(M)$ is the $O(n)$-bundle of orthonormal frames of $M$. 
\end{definition}

\begin{example}
	Consider the inclusion $SO(n) \hookrightarrow O(n)$. Then an $SO(n)$-structure on an $n$-dimensional manifold $M$ is a reduction of the structure group of $\FR(M)$ to $SO(n)$, which corresponds to an orientation on $M$. 
\end{example}

To make this data more amenable to homotopy theory, we can look at \emph{classifying spaces}. For any Lie group $G$ we have a principal $G$-bundle $EG \ra BG$, called the \emph{universal $G$ bundle}, such that $EG$ is contractible and has a free $G$-action. Then it is true that principal $G$-bundle $P\ra M$ classified by a map $f: M \ra BG$ in the sense there there exists a map $f: M\ra BG$ which is unique up to homotopy and such that $P$ is isomorphic to the the pullback bundle 

\[\begin{tikzcd}
	{EG \times_f M} & EG \\
	M & BG.
	\arrow[from=1-1, to=1-2]
	\arrow[from=1-1, to=2-1]
	\arrow["\lrcorner"{anchor=center, pos=0.125}, draw=none, from=1-1, to=2-2]
	\arrow[from=1-2, to=2-2]
	\arrow["f"', from=2-1, to=2-2]
\end{tikzcd}\]

\begin{lemma}\label{lem:Gstructure}
	A $G$-structure on $M$ is equivalently a pair $(\triv,f)$ where $f : M \ra BG$ and $triv$ is an isomorphisms of vector bundles 
	\[
	\triv : TM \lra f^* E_\rho
	\]
	where $E_\rho$ is the vector bundle 
	\[\begin{tikzcd}
	{E_\rho} & {EO(n)} \\
	BG & {BO(n)}
	\arrow[from=1-1, to=1-2]
	\arrow[from=1-1, to=2-1]
	\arrow["\lrcorner"{anchor=center, pos=0.125}, draw=none, from=1-1, to=2-2]
	\arrow[from=1-2, to=2-2]
	\arrow["\rho"', from=2-1, to=2-2]
\end{tikzcd}\]
where $EO(n)$ is the universal $O(n)$-vector bundle. 
\end{lemma}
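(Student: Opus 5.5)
We prove Lemma \ref{lem:Gstructure} by passing back and forth between principal bundles and their associated vector bundles, using the classifying property of $EG \ra BG$ at both ends. First we fix conventions: $E_\rho$ is the pullback of the universal rank-$n$ vector bundle $EO(n)\times_{O(n)}\rr^n \ra BO(n)$ along $B\rho$. Equivalently, $E_\rho \cong EG\times_G \rr^n$, where $G$ acts on $\rr^n$ through $\rho$. This holds because $\rho(EG) = EG\times_G O(n)$ is classified by $B\rho$, so $\rho(EG)\cong (B\rho)^*EO(n)$, and $\rho(EG)\times_{O(n)}\rr^n \cong EG\times_G\rr^n$ by the usual associativity of balanced products.

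\emph{From reductions to pairs.} Let $(Q,\phi)$ be a $G$-structure, so that $\phi:\rho(Q)\xrightarrow{\sim}\FR(M)$. Choose a classifying map $f:M\ra BG$ together with an isomorphism $Q\cong f^*EG$. Applying $-\times_{O(n)}\rr^n$ to $\phi$ gives
\[
TM\cong \FR(M)\times_{O(n)}\rr^n \cong \rho(Q)\times_{O(n)}\rr^n\cong Q\times_G\rr^n\cong f^*(EG\times_G\rr^n)\cong f^*E_\rho .
\]
We take $\triv$ to be this composite. Each step is natural: associated bundles commute with pullback, and we use the identification $TM\cong \FR(M)\times_{O(n)}\rr^n$ recalled before the definition of $G$-structures.

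\emph{From pairs to reductions.} Conversely, given $(\triv,f)$, set $Q:=f^*EG$, which is a principal $G$-bundle. Then $\rho(Q)\cong f^*\rho(EG)\cong f^*(B\rho)^*EO(n)$ is a principal $O(n)$-bundle whose associated vector bundle is $f^*E_\rho$. A vector bundle isomorphism $TM\cong f^*E_\rho$ induces an isomorphism of frame bundles $\FR(M)\cong \mathrm{Fr}(f^*E_\rho)\cong\rho(Q)$. For this to give orthonormal frames we need $\triv$ to respect metrics. We can arrange this either by requiring it, or by noting that the space of metrics is contractible and applying Gram--Schmidt, which deforms $GL_n$-frames to $O(n)$-frames. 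The inverse of this isomorphism of frame bundles is $\phi$.

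\emph{Inverse constructions.} Finally we check that the two constructions are mutually inverse, up to isomorphism of reductions on one side and up to homotopy of $f$ together with a compatible $\triv$ on the other. This follows from the uniqueness up to homotopy of classifying maps and from the fact that the frame bundle and associated vector bundle constructions are inverse equivalences between principal $O(n)$-bundles and rank-$n$ metric vector bundles.

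The main obstacle is making precise what ``equivalently'' means. The map $f$ is only unique up to homotopy, so the correspondence is a bijection of isomorphism classes, or more properly an equivalence of groupoids or spaces of structures. We would state it as a bijection on equivalence classes, where pairs $(\triv,f)$ are identified along homotopies of $f$ via the induced bundle isomorphisms, and we would handle the metric/orthonormality issue by the Gram--Schmidt retraction $GL_n\simeq O(n)$.
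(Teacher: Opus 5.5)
Your proposal is correct, and its forward direction is exactly the paper's proof: apply $\rr^n\times_{O(n)}(-)$ to $\phi\colon\rho(Q)\to\FR(M)$, then identify $\rr^n\times_{O(n)}\rho(Q)$ with $f^*E_\rho$ through a classifying map of $Q$. You spell this identification out via $Q\times_G\rr^n$ and a chosen isomorphism $Q\cong f^*EG$, where the paper just writes ``by definition''. The paper stops after this direction, so your converse via $Q:=f^*EG$ goes beyond its argument rather than departing from it. The same holds for your remark that $\triv$ must be taken (or deformed to be) isometric to land in orthonormal frames, and for your clarification that ``equivalently'' means a bijection on equivalence classes.
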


\begin{proof} Let $(Q,\phi)$ be a $G$-structure on $M$. Because $Q$ is a principal $G$-bundle, it has a classifying map $f: M \ra BG$. By definition, $\phi$ is an isomorphism
\[
\phi: \rho(Q) \lra \FR(M)
\]
which induces an isomorphism
\[
\rr^n \times_{O(n)} \rho(Q) \lra \rr^n \times_{O(n)} \FR(M) \cong TM.
\]
By definition $\rr^n \times_{O(n)} \rho(Q) \cong f^* E_\rho$. \end{proof} 

This lemma helps with the following definition explains the following definition of the category of Bordisms with $G$-structures.

\begin{definition}\label{def:PBordstructured}
	Let $V$ be a finite dimensional real vector space. Define  the $n$-simplicial $P\cob_n^V(G)$ to be the $n$-simplicial space whose $l$-simplices of  $P\cob_n^V(G)_{k_1\dots,k_n}$ consist of tuples $(M, f, triv, \underline{\bar{I}}(s)=(I_0^i(s) \leq \dots \leq I_{k_i}^i(s))_{s \in|\Delta^l|_e}$ such that
\begin{enumerate}
	\item  $\overline{\underline{I}}=(I_0^i(s) \leq \dots \leq I_{k_i}^i(s))_{ s\in |\Delta^l|_e}$ is an $l$-simplex in $\operatorname{Int}_{k_1, \dots, k_n}^n$,
	\item $M$ is a closed and bounded $(n+l)$-dimensional submanifold of $V \times B(\overline{\underline{I}})$ such that 

	(a) the composition $\pi: M \hookrightarrow V \times B(\underline{\bar{I}}) \ra B(\underline{\bar{I}})$ of the inclusion with the projection is 
	proper,

	(b) its composition with the projection onto $|\Delta^l|_e$, $\pi_l: M \ra |\Delta^l|_e$, is a submersion,

	(c) $(f, triv ): \operatorname{ker}(D \pi_l: T M \ra  T|\Delta^l|_e) \ra f^* \gamma_G$ is a fiberwise linear isomorphism.

	\item for every $S \subseteq\{1, \dots, n\}$, let $p_S: M \xrightarrow{\pi} B(\underline{\bar{I}}) \subseteq \mathbb{R}^n \times|\Delta^l|_e \xrightarrow{\pi_S} \mathbb{R}^S \times|\Delta^l|_e$ be the composition of $\pi$ with the projection $\pi_S$ onto the $S$-coordinates. Then for every $1 \leq i \leq n$ and $0 \leq j_i \leq k_i$, at every $x \in p_{\{i\}}^{-1}(I_{j_i}^i(s) \times\{s\})$, the map $p_{\{i, \dots, n\}}$ is submersive.
\end{enumerate}
\end{definition}

As before, it can be shown(\cite{claudiaDamien}), that $\colim_{V \subset \rr^\infty} P\cob_n^V(G)$ is an $n$-fold Segal space  which can be completed to an $n$-fold complete Segal space $\cob_n(G)$.

\subsection{The \texorpdfstring{$\zz/2$}{C2}-action}

In this section we show that under certain hypotheses on the structure group of the bordism category, the $n$-fold complete Segal space $\cob_n(G)$ inherits the structure of a symmetric $\zz/2$-monoidal $n$-fold Segal space (Theorem \ref{thm: equivZ2bord}).

The idea on how to get this action is the following. Suppose that we have a split shot exact sequence of Lie groups
\[\begin{tikzcd}
	1 & G & {\hat G} & Q & 1.
	\arrow[from=1-1, to=1-2]
	\arrow[from=1-2, to=1-3]
	\arrow[from=1-3, to=1-4]
	\arrow["s"', shift right, curve={height=6pt}, from=1-4, to=1-3]
	\arrow[from=1-4, to=1-5]
\end{tikzcd}\]

The splitting $s: Q \ra \hat G$ induces an action of $Q$ on $G$. Now suppose that $P$ is a principal $G$-bundle, so that we have a right action  $(p,g) \mapsto p\cdot g$ on $P$. For an element $q \in Q$ we let $\sigma_q \in \Aut(G)$ be the corresponding automorphism of $G$, and we can define the twisted action 
\[
\rho_q : G \times P \lra P 
\]
\[
(p,g) \mapsto g \cdot \sigma_q(g).
\]
This still defines a principal $G$-bundle, $P_q$. If the classifying map of $P$ is  $f: M \ra BG$, then the classifying map of $P_q$ is 
\[
f\circ B(\sigma_q) : M \xrightarrow{f}   BG \xrightarrow{B(\sigma_q)} BG 
\]
and thus $Q$ acts on the $G$-bundles over $M$.

\begin{definition}\cite[2.4]{FreedHopkinsReflection}
	 The symmetry type of a topological quantum field theory  is a pair $\left(H_n, \rho_n\right)$ of a compact Lie group $H_n$ and a homomorphism $\rho_n: H_n \rightarrow O(n)$ whose image contains $S O(n) \subset O(n)$. The kernel $K$ of $\rho_n$ is called the group of internal symmetries.

	 Since a symmetry type of a TQFT is in particular a homomorphism $\rho_n : H_n \ra O(n)$, we have a well defined bordism $n$-fold complete Segal space $\cob_n(H_n)$. 
\end{definition}

\begin{notation}\label{not:models}
We will fix the following for the rest of the paper. For a Lie group $G$ let $EG$ be the bar construction and $BG = EG/G$, and thus $E$ and $B$, so that $B(\varphi\psi) = B(\varphi)B(\psi)$ and $B(\id) = \id$, and likewise for $E$. For a morphism $\rho \colon G \ra O(n)$ set
\[
\rho(EG)  :=  EG\times_G O(n), \ \ \ \ \gamma_G  :=  \rr^n \times_{O(n)} \rho(EG),
\]
where $G$ acts on $O(n)$ through $\rho$. With this we get the pullback of Lemma
\ref{lem:Gstructure}. We write the points of $\gamma_G$ as $[v,[e,g]]$ with $v \in \rr^n$,
$e \in EG$ and $g \in O(n)$, subject to the two relations
\begin{equation}\label{eq:relations}
[v,[e,g]] \ = \ \bigl[v,[ek, \rho(k)^{-1}g]\bigr] \ \ \ \ (k \in G),
\qquad
[v,[e,g]] \ = \ \bigl[a^{-1}v,\ [e,\ ga]\bigr] \ \ \ \ (a \in O(n)).
\end{equation}
Different models of $BG$ and $\gamma_G$ are related by canonical equivalences and give
equivalent bordism categories. We use this one as it is convenient to our constructions. 
\end{notation}
 
\begin{notation}\label{not:hath}
Let $(H_n,\rho_n)$ be a symmetry type. Fix a hyperplane reflection $\varsigma \in O(n)$,
so $\varsigma^2 = \id$, and let $\hat h \in \hat H_n$ be the splitting element given by
\cite[Prop.~3.13]{FreedHopkinsReflection} associated to it, so that
\begin{equation}\label{eq:hath}
\hat\rho_n(\hat h)  = (-1,\varsigma), \ \ \ \  \hat h^{2} =  1 .
\end{equation}
We denote $c_{\hat h}(k) := \hat h k \hat h^{-1}$ for the induced automorphism of $H_n$ and $c_\varsigma(a) := \varsigma a \varsigma^{-1}$ for conjugation on $O(n)$. Then
\begin{equation}\label{eq:intertwine}
\rho_n \circ c_{\hat h}  =  c_\varsigma \circ \rho_n, \ \ \ \
c_{\hat h}^{2} =  \id_{H_n},
\end{equation}
where the second equality holds because $\hat h^{2} = 1$.
\end{notation}
 
\begin{remark}\label{rmk:choiceofh}
The splitting of \cite[Prop.~3.13]{FreedHopkinsReflection} depends on the choice of
hyperplane reflection. If $\hat h$ and $\hat h'$ are the splitting elements attached to
$\varsigma$ and $\varsigma'$ then $\hat h^{-1}\hat h'$ lies in
$\hat\rho_n^{-1}(\{1\}\times O(n)) = H_n$, so $c_{\hat h}$ and $c_{\hat h'}$ differ by an
inner automorphism and the resulting actions are isomorphic. We fix
$\varsigma$ and $\hat h$ for the remainder of the paper.
\end{remark}

\begin{construction}\label{con:lambda}
Because the bar $E$ is a functor, the automorphism $c_{\hat h}$ of $H_n$ induces a map
\[
Ec_{\hat h} : EH_n \ra EH_n, \ \ \ \
Ec_{\hat h}(e\cdot k)  = Ec_{\hat h}(e)\cdot c_{\hat h}(k) \ \ \ \ (k \in H_n),
\]
which is $Bc_{\hat h}: BH_n \ra BH_n$  after quotiented. Define
\[
\lambda : \gamma_{H_n} \ra \gamma_{H_n}, \ \ \ \
\lambda\bigl([v,[e,g]]\bigr)  :=  \bigl[\varsigma^{-1}v,
[Ec_{\hat h}(e) \varsigma^{-1}g\varsigma]\bigr].
\]
\end{construction}
 
\begin{lemma}\label{lem:lambdawelldef}
The map $\lambda$ is a well-defined fiberwise linear isomorphism covering the map
$Bc_{\hat h} : BH_n \ra BH_n$. Thus it induces an isomorphism of vector bundles,
\[
\lambda^{\sharp} : \gamma_{H_n} \xrightarrow{\ \sim\ }
(Bc_{\hat h})^{*}\gamma_{H_n}.
\]
\end{lemma}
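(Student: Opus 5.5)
The plan is to verify directly that the formula for $\lambda$ in Construction \ref{con:lambda} respects the two relations \eqref{eq:relations}. The other three claims then follow by short checks: fiberwise linearity, that $\lambda$ covers $Bc_{\hat h}$, and invertibility.

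\emph{Well-definedness.} Write $\lambda([v,[e,g]]) = [\varsigma^{-1}v,[Ec_{\hat h}(e),\varsigma^{-1}g\varsigma]]$.

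For the first relation, replace $(e,g)$ by $(ek,\rho_n(k)^{-1}g)$. By the equivariance in Construction \ref{con:lambda}, $Ec_{\hat h}(ek) = Ec_{\hat h}(e)\,c_{\hat h}(k)$. Also $\varsigma^{-1}\rho_n(k)^{-1}g\varsigma = c_\varsigma(\rho_n(k))^{-1}\varsigma^{-1}g\varsigma$. By \eqref{eq:intertwine} this equals $\rho_n(c_{\hat h}(k))^{-1}\varsigma^{-1}g\varsigma$, using $\varsigma^{-1}=\varsigma$. So the image has the form $[\varsigma^{-1}v,[e'k',\rho_n(k')^{-1}g']]$ with $k'=c_{\hat h}(k)$, $e' = Ec_{\hat h}(e)$ and $g' = \varsigma^{-1}g\varsigma$. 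The first relation in the target then identifies it with the original image.

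For the second relation, replace $(v,g)$ by $(a^{-1}v, ga)$. The image is $[\varsigma^{-1}a^{-1}v,[e',\varsigma^{-1}g a\varsigma]]$. Rewrite this with $a' := \varsigma^{-1}a\varsigma$: it is $[a'^{-1}\varsigma^{-1}v,[e', g'a']]$, since $\varsigma^{-1}a^{-1} = a'^{-1}\varsigma^{-1}$ and $\varsigma^{-1}ga\varsigma = g'a'$. This equals the original image by the second relation applied with $a'\in O(n)$.

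Continuity holds because $\lambda$ is induced from the continuous map $(v,e,g)\mapsto(\varsigma^{-1}v,Ec_{\hat h}(e),\varsigma^{-1}g\varsigma)$ on $\rr^n\times EH_n\times O(n)$, which descends to the quotient.

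\emph{Covering and linearity.} The projection $\gamma_{H_n}\to BH_n$ sends $[v,[e,g]]\mapsto [e]$. Under $\lambda$ this point goes to $[Ec_{\hat h}(e)] = Bc_{\hat h}([e])$, so $\lambda$ covers $Bc_{\hat h}$. On the fiber over $[e]$, choose a representative with fixed $(e,g)$; every point of that fiber is then $[v,[e,g]]$ for a unique $v$. In these coordinates $\lambda$ is $v\mapsto\varsigma^{-1}v$ into the fiber over $Bc_{\hat h}[e]$ with representative $(Ec_{\hat h}(e),\varsigma^{-1}g\varsigma)$. This map is linear and bijective, hence a linear isomorphism on fibers.

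\emph{Isomorphism.} By the functoriality of $E$ from Notation \ref{not:models} and $c_{\hat h}^2=\id$, we have $(Ec_{\hat h})^2 = E(c_{\hat h}^2)=\id$. Together with $\varsigma^2=\id$ this gives $\lambda^2=\id$, so $\lambda$ is a homeomorphism. The universal property of the pullback then turns $\lambda$, covering $Bc_{\hat h}$ and fiberwise a linear isomorphism, into the bundle isomorphism $\lambda^\sharp:\gamma_{H_n}\to (Bc_{\hat h})^*\gamma_{H_n}$ over $BH_n$.

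I expect the main obstacle to be the bookkeeping in the first relation: one has to match $c_\varsigma\circ\rho_n$ against $\rho_n\circ c_{\hat h}$ with the inverse on the correct side. Everything else is formal.
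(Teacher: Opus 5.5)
Your proposal is correct and follows the paper's proof essentially step for step. You check both relations in \eqref{eq:relations} with the same substitutions $k' = c_{\hat h}(k)$ and $a' = \varsigma^{-1}a\varsigma$, read off the base map $Bc_{\hat h}$ and the fiber map $v\mapsto\varsigma^{-1}v$, and invoke the universal property of the pullback. Your added remarks on continuity and on invertibility via $\lambda^2=\id$ only fill in details the paper leaves implicit. The latter is the content of Lemma \ref{lem:lambdainvolutive}, and using it here is not circular, since its proof needs only the well-definedness you establish first.
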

 
\begin{proof}
We check that $\lambda$ respects the two relations \eqref{eq:relations}.
 
For the first, replace $(e,g)$ by $(ek,\rho_n(k)^{-1}g)$ with $k \in H_n$. By
\eqref{eq:intertwine} and $\varsigma^2 = \id$ we have
$\varsigma^{-1}\rho_n(k)\varsigma = c_\varsigma(\rho_n(k)) = \rho_n(c_{\hat h}(k))$, and
hence
\[
\bigl(Ec_{\hat h}(ek),\ \varsigma^{-1}\rho_n(k)^{-1}g\varsigma\bigr)
\ = \ \bigl(Ec_{\hat h}(e)\,c_{\hat h}(k),\
\rho_n\bigl(c_{\hat h}(k)\bigr)^{-1}\varsigma^{-1}g\varsigma\bigr),
\]
which is the first relation applied to $c_{\hat h}(k)$. So the class is unchanged.
 
For the second, replace $(v,g)$ by $(a^{-1}v, ga)$ with $a \in O(n)$. Then
\[
\bigl(\varsigma^{-1}a^{-1}v, \varsigma^{-1}ga\varsigma\bigr)
 =  \Bigl(\bigl(\varsigma^{-1}a\varsigma\bigr)^{-1}\varsigma^{-1}v,
\bigl(\varsigma^{-1}g\varsigma\bigr)\bigl(\varsigma^{-1}a\varsigma\bigr)\Bigr),
\]
which is the second relation applied to $\varsigma^{-1}a\varsigma \in O(n)$.
 
On the base, $\lambda$ induces the map $[e]\mapsto[Ec_{\hat h}(e)]$ on $BH_n = EH_n/H_n$,
which is $Bc_{\hat h}$ by Notation \ref{not:models}. On a fiber it acts by
$v \mapsto \varsigma^{-1}v$, a linear isomorphism. The final statement is the universal
property of the pullback.
\end{proof}
 
\begin{lemma}\label{lem:lambdainvolutive}
The map $\lambda$ satisfies $\lambda^2 = \id_{\gamma_{H_n}}$.
\end{lemma}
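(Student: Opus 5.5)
The plan is to compute $\lambda^2$ on a representative $[v,[e,g]]$ and reduce it to the identity using three facts: $\varsigma^2=\id$, $c_{\hat h}^2 = \id_{H_n}$ from \eqref{eq:intertwine}, and the functoriality of the bar construction fixed in Notation \ref{not:models}. Since Lemma \ref{lem:lambdawelldef} already shows that $\lambda$ is well defined on equivalence classes, it suffices to check the identity on a single representative.

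Applying Construction \ref{con:lambda} twice gives
\[
\lambda^2\bigl([v,[e,g]]\bigr) = \bigl[\varsigma^{-1}\varsigma^{-1}v,\ [Ec_{\hat h}(Ec_{\hat h}(e)),\ \varsigma^{-1}\varsigma^{-1}g\varsigma\varsigma]\bigr].
\]
The $O(n)$-parts simplify directly. Because $\varsigma^2=\id$, we have $\varsigma^{-2}v = v$ and $\varsigma^{-2}g\varsigma^{2} = g$.

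For the $EH_n$-part we use that $E$ is a strict functor, so $E(\varphi\psi)=E(\varphi)E(\psi)$ and $E(\id)=\id$. Hence
\[
Ec_{\hat h}\circ Ec_{\hat h} = E(c_{\hat h}^2) = E(\id_{H_n}) = \id_{EH_n}.
\]
The second equality is the relation $c_{\hat h}^2=\id_{H_n}$, which holds because $\hat h^2=1$ by \eqref{eq:hath}. Putting the pieces together, $\lambda^2([v,[e,g]]) = [v,[e,g]]$.

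The only point needing care is the strict functoriality of $E$. A merely homotopy-coherent model of $EH_n$ would give $\lambda^2\simeq\id$ rather than equality, which is precisely why the bar construction was fixed in Notation \ref{not:models}. A minor second point is that conjugation by $\varsigma$ on the $g$-coordinate must compose to conjugation by $\varsigma^2$; this is immediate since $\varsigma^{-1}(\varsigma^{-1}g\varsigma)\varsigma = \varsigma^{-2}g\varsigma^{2}$. No relation from \eqref{eq:relations} needs to be invoked, so the identity already holds at the level of representatives.
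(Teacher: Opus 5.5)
Your proposal is correct and follows the paper's proof: both compute $\lambda^2$ on a representative $[v,[e,g]]$ and reduce it to the identity. The two ingredients are $\varsigma^2=\id$ and the strict functoriality of the bar construction, $Ec_{\hat h}\circ Ec_{\hat h}=E(c_{\hat h}^{2})=E(\id)=\id$.
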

 
\begin{proof}
Since $E$ is a functor, $Ec_{\hat h}\circ Ec_{\hat h} = E(c_{\hat h}^{\,2}) = E(\id) =
\id$ by \eqref{eq:intertwine}, and $\varsigma^2 = \id$. Hence
\[
\lambda^2\bigl([v,[e,g]]\bigr)
\ = \ \bigl[\varsigma^{-2}v,\ [\,E(c_{\hat h}^{\,2})(e),\
\varsigma^{-2}g\varsigma^{2}\,]\bigr]
\ = \ \bigl[v,[e,g]\bigr].
\qedhere
\]
\end{proof}

\begin{construction}\label{con:alpha}
For an $l$-simplex $\sigma = (M, f, \triv, \bar I)$ of
$(P\cob^V_n(H_n))_{k_1,\dots,k_n}$ as in Definition \ref{def:PBordstructured}, set
\[
\alpha(\sigma) \ := \ \bigl(M,\ Bc_{\hat h}\circ f,\ (f^{*}\lambda^{\sharp})\circ\triv,\
\bar I\bigr).
\]
\end{construction}
 
\begin{lemma}\label{lem:alphawelldef}
For a simplex $\sigma$, $\alpha(\sigma)$ is again an $l$-simplex of $(P\cob^V_n(H_n))_{k_1,\dots,k_n}$.
\end{lemma}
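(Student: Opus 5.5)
The plan is to verify the conditions of Definition \ref{def:PBordstructured} one at a time for the tuple $\alpha(\sigma) = (M, Bc_{\hat h}\circ f, (f^*\lambda^\sharp)\circ\triv, \bar I)$. The key observation is that $\alpha$ leaves the underlying manifold $M$, its embedding in $V\times B(\bar I)$, and the interval data $\bar I$ untouched. Conditions (1), (2)(a), (2)(b) and (3) depend only on $(M,\bar I)$, so they hold for $\alpha(\sigma)$ because they hold for $\sigma$. The only condition that involves the structure data is (2)(c), so all the work goes there.

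For (2)(c) we must show that
\[
\bigl(Bc_{\hat h}\circ f,\ (f^*\lambda^\sharp)\circ \triv\bigr) : \ker(D\pi_l) \lra (Bc_{\hat h}\circ f)^*\gamma_{H_n}
\]
is a fiberwise linear isomorphism. We argue as follows.

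\begin{enumerate}
\item By hypothesis, $\triv : \ker(D\pi_l) \ra f^*\gamma_{H_n}$ is a fiberwise linear isomorphism covering $\id_M$.
\item By Lemma \ref{lem:lambdawelldef}, $\lambda^\sharp : \gamma_{H_n} \ra (Bc_{\hat h})^*\gamma_{H_n}$ is an isomorphism of vector bundles over $BH_n$.
\item Pulling back along $f$ gives an isomorphism
\[
f^*\lambda^\sharp : f^*\gamma_{H_n} \ra f^*(Bc_{\hat h})^*\gamma_{H_n}.
\]
\item Pullbacks compose, so $f^*(Bc_{\hat h})^*\gamma_{H_n} = (Bc_{\hat h}\circ f)^*\gamma_{H_n}$ canonically. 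If pullbacks are modelled as fiber products, this is an honest identification $M\times_{BH_n}\bigl(BH_n\times_{BH_n}\gamma\bigr) \cong M\times_{BH_n}\gamma$ via $(x,v)\mapsto (x,\lambda(v))$.
\item The composite of two fiberwise linear isomorphisms covering the identity of $M$ is again one, which gives (2)(c).
\end{enumerate}

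The main point requiring care is smoothness, or more precisely the simplicial-set level structure. The map $Bc_{\hat h}\circ f$ must be an admissible classifying map in whatever sense $f$ is, for instance continuous or smooth into the bar construction. This holds because $Bc_{\hat h}$ is induced functorially from a Lie group automorphism (Notation \ref{not:models}), so composing with it preserves the regularity class. Likewise, $\lambda$ is built from $Ec_{\hat h}$ and the linear map $\varsigma^{-1}$, so it is continuous and fiberwise linear, and $f^*\lambda^\sharp$ inherits this.

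Finally, we would remark that $\alpha$ commutes with the face and degeneracy maps, both simplicial in $l$ and in the $k_i$. Those maps act only by restricting or reparametrizing $M$, $\bar I$ and $(f,\triv)$, and precomposition commutes with postcomposing by $Bc_{\hat h}$ and $\lambda^\sharp$. This is not needed for the lemma itself, but it sets up the subsequent statement that $\alpha$ is a map of $n$-fold simplicial spaces.
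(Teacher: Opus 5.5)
Your proposal is correct and is essentially the paper's proof: conditions (1), (2a), (2b) and (3) are untouched because $\alpha$ fixes $M$ and $\bar I$, and (2c) follows by pulling $\lambda^\sharp$ back along $f$, identifying $f^*(Bc_{\hat h})^*\gamma_{H_n}=(Bc_{\hat h}\circ f)^*\gamma_{H_n}$, and composing with $\triv$. One cosmetic slip: in step 4 the canonical identification of iterated pullbacks is $(x,(b,w))\mapsto(x,w)$ and involves no $\lambda$; the formula $(x,v)\mapsto(x,\lambda(v))$ you wrote is its composite with $f^*\lambda^\sharp$, which is the map $f^*\gamma_{H_n}\ra(Bc_{\hat h}\circ f)^*\gamma_{H_n}$ you actually use.
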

 
\begin{proof}
Conditions (1), (2a), (2b) and (3) of Definition \ref{def:PBordstructured}  only cares about $\bar I$, the submanifold $M$, and the projections $\pi$, $\pi_l$ and $p_S$, none of
which $\alpha$ changes, so we only need to check (2c).Lemma \ref{lem:lambdawelldef} gives an isomorphism
$\lambda^{\sharp} :  \gamma_{H_n}\ra (Bc_{\hat h})^{*}\gamma_{H_n}$ of bundles over
$BH_n$, so pulling back along $f$ yields an isomorphism
\[
f^{*}\lambda^{\sharp} \colon f^{*}\gamma_{H_n} \xrightarrow{\ \sim\ }
f^{*}(Bc_{\hat h})^{*}\gamma_{H_n} \ = \ (Bc_{\hat h}\circ f)^{*}\gamma_{H_n},
\]
the equality is given by the functoriality of pullback. Composing the fiberwise isomorphism $\triv$ with an isomorphism gives a fiberwise isomorphism, as required.
\end{proof}
 
\begin{lemma}\label{lem:alphasimplicial}
The map $\alpha$ commutes with the face and degeneracy maps in all $n$ simplicial directions,
with the spatial face and degeneracy maps, and with the colimit over
$V \subset \rr^\infty$.
\end{lemma}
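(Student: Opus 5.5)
The plan is to use that $\alpha$ leaves the underlying data $(M,\bar I)$ untouched. It modifies only the tangential structure, and it does so by postcomposition with the fixed maps $Bc_{\hat h}$ and $\lambda^\sharp$, which live on $BH_n$ and $\gamma_{H_n}$ and are independent of $M$. Every structure map of $P\cob^V_n(H_n)$ acts on $(M,\bar I)$ and, on the structure, by restriction or pullback along an induced map of manifolds. So each compatibility should reduce to the naturality of pullback, namely the identity $(f\circ j)^*\lambda^\sharp = j^*(f^*\lambda^\sharp)$.

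For the simplicial directions $k_1,\dots,k_n$, I will recall from \cite{claudiaDamien} how the maps are defined.

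\begin{itemize}
\item A face map $d_j$ in direction $i$ forgets an interval $I^i_j$. For an outer face it also cuts $M$ down to the preimage $M' = \pi^{-1}(B(\bar I'))$, an open submanifold; any standard rescaling needed to keep the boxes of the right form is applied only to the $\rr^n$-coordinates. The structure is then restricted: $(f|_{M'},\triv|_{M'})$, or the transport along the diffeomorphism in the rescaled case.
\item A degeneracy map repeats an interval, and leaves $M$ and $(f,\triv)$ unchanged.
\end{itemize}

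In either case the new structure is $(f\circ j, j^*\triv)$ for the relevant embedding or diffeomorphism $j$. Applying $\alpha$ afterwards gives $(Bc_{\hat h}\circ f\circ j,\ ((f\circ j)^*\lambda^\sharp)\circ j^*\triv)$, which equals $j^*$ applied to $\alpha(\sigma)$. Hence $\alpha$ commutes with these maps.

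For the spatial simplicial maps, a map $\theta:[l']\to[l]$ induces $|\Delta^{l'}|_e\to|\Delta^l|_e$. The new simplex is obtained by pulling back $M$ along this map, giving $\theta^*M\to M$ over $|\Delta^{l'}|_e$ by transversality, since $\pi_l$ is a submersion. The intervals are reparametrized. The structure is pulled back along $\tilde\theta:\theta^*M\to M$. The kernel of $D\pi_{l'}$ maps isomorphically onto the pullback of $\ker D\pi_l$, so $(f,\triv)$ pulls back to $(f\circ\tilde\theta,\tilde\theta^*\triv)$. The same naturality identity then gives commutation with $\alpha$.

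For the colimit, the inclusions $V\subset W$ are postcomposition of the embedding $M\subset V\times B(\bar I)$ with $V\times B\subset W\times B$. These leave $M$ as an abstract manifold, together with $f$ and $\triv$, literally unchanged. So $\alpha$ commutes with them on the nose and induces a map on $\colim_V$.

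The only real obstacle is bookkeeping. It lies in pinning down exactly how the face maps in \cite{claudiaDamien} act, in particular the outer faces with cutting and rescaling. One must check that in every case the structure is transported by pullback along a map of the underlying manifolds, and never altered through the $O(n)$ coordinates, which $\lambda$ twists by $\varsigma$. I would verify this case by case, inner versus outer faces, noting that any rescaling is a diffeomorphism of the $\rr^n$-coordinates that is trivial on the vertical tangent directions in $V$. Since $\lambda$ is defined globally on $\gamma_{H_n}$, no compatibility beyond naturality is needed. Strictness, rather than commutation only up to homotopy, follows because all identities involved are equalities of maps.
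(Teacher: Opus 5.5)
Your proposal is correct and follows essentially the same route as the paper. Since $\alpha$ fixes $(M,\bar I)$ and changes $(f,\triv)$ only by postcomposition with the fixed maps $Bc_{\hat h}$ and $f^{*}\lambda^{\sharp}$, commutation with the restrictions and pullbacks in the $n$ simplicial directions, with the spatial maps, and with the maps $V\subseteq V'$ reduces to naturality of pullback. One small correction: the rescaling case you hedge on does not arise, because the outer faces just restrict $M$ to the preimage of the smaller box; and even if it did, transport along a diffeomorphism acts on the $TM$ side of $\triv$ while $\alpha$ acts on the $\gamma_{H_n}$ side, so the two would commute regardless.
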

 
\begin{proof}
 Simplicial maps act on $\sigma = (M,f,\triv,\bar I)$ by reindexing $\bar I$ and, for a morphism $g \colon [m]\ra[l]$ of $\Delta$, by pulling $M \ra |\Delta^l|_e$ back along $|g|$ and restricting $f$ and $\triv$. Since $\alpha$ alters $f$ only by postcomposition with the fixed map $Bc_{\hat h}$, and $\triv$ only by composition with $f^{*}\lambda^{\sharp}$, which is natural in $f$, both commute with restriction:
\[
(Bc_{\hat h}\circ f)\big|_{g^{\Delta}M} = Bc_{\hat h}\circ\bigl(f|_{g^{\Delta}M}\bigr),
 \ \ \ \
\bigl(f^{*}\lambda^{\sharp}\bigr)\big|_{g^{\Delta}M}
= \bigl(f|_{g^{\Delta}M}\bigr)^{*}\lambda^{\sharp}.
\]
The maps in the $n$ simplicial directions only reindex $\bar I$ and restrict $M$,
which $\alpha$ does not change. Finally $\alpha$ does not act on $V$, so it commutes
with the maps $P\cob^V_n(H_n)\ra P\cob^{V'}_n(H_n)$ for $V\subseteq V'$ and thus with
the colimit.
\end{proof}
 
\begin{lemma}\label{lem:alphamonoidal}
 The map $\alpha$ preserves the collections of Definition \ref{def:PBordm} and commutes with the
structure maps of the $\Gamma$-object of Theorem \ref{thm:Gammaobject}. That is,
\[
\alpha\Bigl(\bigsqcup_\beta M_\beta\Bigr) \ = \ \bigsqcup_\beta \alpha(M_\beta).
\]
\end{lemma}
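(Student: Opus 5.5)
We first have to say what $\alpha$ does on the structured analogue of Definition \ref{def:PBordm}. An element there is a tuple $(M_1,\dots,M_m,\bar I)$, where each $(M_\beta,f_\beta,\triv_\beta,\bar I)$ is a simplex of $(P\cob^V_n(H_n))_{k_1,\dots,k_n}$ and the $M_\beta$ are pairwise disjoint. We let $\alpha$ act componentwise:
\[
\alpha(M_1,\dots,M_m,\bar I) := (\alpha(M_1),\dots,\alpha(M_m),\bar I),
\]
where $\alpha(M_\beta)$ is given by Construction \ref{con:alpha}. By Lemma \ref{lem:alphawelldef}, each component is again a simplex. Disjointness is preserved because $\alpha$ leaves every underlying submanifold $M_\beta\subset V\times B(\bar I)$ fixed. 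Hence $\alpha$ preserves the collections. Lemma \ref{lem:alphasimplicial} applies componentwise, so $\alpha$ is a map of $n$-fold simplicial spaces on $P\cob^V_n(H_n)\langle m\rangle$, compatible with the colimit over $V$.

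For the compatibility with the structure maps of Theorem \ref{thm:Gammaobject}, fix $f:\langle m\rangle\to\langle k\rangle$. The key point is the displayed identity $\alpha(\bigsqcup_\beta M_\beta)=\bigsqcup_\beta\alpha(M_\beta)$. The structured disjoint union is defined as follows:
\begin{itemize}
\item its underlying manifold is $\bigsqcup_{\beta} M_\beta$;
\item its classifying map $F$ is given by $F|_{M_\beta}=f_\beta$;
\item its trivialization $\mathrm{T}$ is given by $\mathrm{T}|_{M_\beta}=\triv_\beta$.
\end{itemize}
This is well defined since the pieces are disjoint, and the kernel of $D\pi_l$ on a disjoint union is the disjoint union of the kernels. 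Applying $\alpha$ gives classifying map $Bc_{\hat h}\circ F$, whose restriction to $M_\beta$ is $Bc_{\hat h}\circ f_\beta$. It gives trivialization $(F^*\lambda^\sharp)\circ \mathrm{T}$, whose restriction to $M_\beta$ is $(f_\beta^*\lambda^\sharp)\circ\triv_\beta$. This uses that pullback of bundle maps commutes with restriction to open and closed subsets, which is the same naturality of $f^*\lambda^\sharp$ in $f$ used in Lemma \ref{lem:alphasimplicial}. These restrictions are exactly the data of $\bigsqcup_\beta\alpha(M_\beta)$, so the two sides agree on the nose. It follows that $\alpha$ commutes strictly with the map $P\cob_n\langle m\rangle\to P\cob_n\langle k\rangle$ for every $f$. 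The preimages $f^{-1}(j)$ play no role beyond choosing which pieces are grouped, and the empty union (for $j$ with empty preimage) is fixed by $\alpha$.

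Two small points remain:
\begin{itemize}
\item \emph{The intervals.} The map $\alpha$ never changes $\bar I$, and the structure maps of the $\Gamma$-object pass $\bar I$ along unchanged, so these agree trivially.
\item \emph{Strictness.} The relation $\alpha^2=\id$ follows from Lemma \ref{lem:lambdainvolutive} together with $Bc_{\hat h}^2=\id$. Combined with the above, this makes $\alpha$ a strict $\zz/2$-action on the $\Gamma$-object before completion.
\end{itemize}

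The main obstacle is making the identity strict rather than up to isomorphism. This needs the disjoint-union structure to be defined by gluing the data $(f_\beta,\triv_\beta)$ on the nose, and the identification $f^*(Bc_{\hat h})^*\gamma_{H_n}=(Bc_{\hat h}f)^*\gamma_{H_n}$ to be an actual equality. The fixed bar-construction models of Notation \ref{not:models} are what guarantee this, and I would verify it pointwise using the formula for $\lambda$.
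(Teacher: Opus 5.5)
Your proposal is correct and follows the same route as the paper. Both arguments rest on the identities $Bc_{\hat h}\circ(f\sqcup f') = (Bc_{\hat h}\circ f)\sqcup(Bc_{\hat h}\circ f')$ and $(f\sqcup f')^{*}\lambda^{\sharp} = f^{*}\lambda^{\sharp}\sqcup (f')^{*}\lambda^{\sharp}$, on the fact that disjointness is untouched because $\alpha$ does not move the submanifolds, and on the $\Gamma$-structure maps being disjoint unions over fibers of $f$; your componentwise definition on $P\cob^V_n(H_n)\langle m\rangle$ and the remarks on the intervals and the empty union only make explicit what the paper leaves implicit.
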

 
\begin{proof}
A structure on a disjoint union is a map out of the disjoint union, and
$Bc_{\hat h}\circ(f\sqcup f') = (Bc_{\hat h}\circ f)\sqcup(Bc_{\hat h}\circ f')$.
Also, $(f\sqcup f')^{*}\lambda^{\sharp} = f^{*}\lambda^{\sharp}\sqcup
(f')^{*}\lambda^{\sharp}$. Disjointness of the $M_\beta$ is unaffected, since $\alpha$
does not move the submanifolds. The structure maps of Theorem \ref{thm:Gammaobject} take
disjoint unions indexed by fibers of a map in $\Gamma$, so they commute with $\alpha$.
\end{proof}
 
\begin{proposition}\label{prop:strictaction}
We have $\alpha^2 = \id$ so that  $\alpha$ defines a strict $\zz/2$-action on $P\cob_n(H_n)$. The same formula defines a strict $\zz/2$-action on
$P\cob_n(H_n)\langle m\rangle$ for every $m$, compatibly with the $\Gamma$-structure, so
that $P\cob_n(H_n)$ is a $(\zz/2)\Gamma$-object.
\end{proposition}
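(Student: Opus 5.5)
The plan is to compute $\alpha^2$ directly on an $l$-simplex $\sigma = (M,f,\triv,\bar I)$ of $(P\cob^V_n(H_n))_{k_1,\dots,k_n}$ and then assemble the global statements from Lemmas \ref{lem:alphasimplicial} and \ref{lem:alphamonoidal}. Applying Construction \ref{con:alpha} twice gives
\[
\alpha^2(\sigma) = \bigl(M,\ Bc_{\hat h}\circ Bc_{\hat h}\circ f,\ \bigl((Bc_{\hat h}\circ f)^{*}\lambda^{\sharp}\bigr)\circ (f^{*}\lambda^{\sharp})\circ\triv,\ \bar I\bigr).
\]
The first component is handled by functoriality of the bar construction, as fixed in Notation \ref{not:models}: $Bc_{\hat h}\circ Bc_{\hat h} = B(c_{\hat h}^{2}) = B(\id) = \id$ by \eqref{eq:intertwine}. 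So the classifying map returns to $f$ on the nose, not merely up to homotopy. This strictness is why the bar model was chosen.

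The main point is the second component. I would unwind $(Bc_{\hat h}\circ f)^{*}\lambda^{\sharp}\circ f^{*}\lambda^{\sharp}$ as $f^{*}$ applied to the composite
\[
\gamma_{H_n}\xrightarrow{\lambda^\sharp}(Bc_{\hat h})^*\gamma_{H_n}\xrightarrow{(Bc_{\hat h})^*\lambda^\sharp}(Bc_{\hat h})^*(Bc_{\hat h})^*\gamma_{H_n}=(Bc_{\hat h}^2)^*\gamma_{H_n}=\gamma_{H_n},
\]
using strict functoriality of pullback: pullbacks are realised as subspaces of products, so the identification $(g\circ h)^* = h^*g^*$ is an equality. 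On total spaces, a point $(x,w)$ of $f^*\gamma_{H_n}$ is sent to $(x,\lambda(w))$ and then to $(x,\lambda^2(w))$. Lemma \ref{lem:lambdainvolutive} gives $\lambda^2=\id$, so this composite is the identity and $\alpha^2(\sigma)=\sigma$. The only care needed is to make sure the identifications of iterated pullbacks are literal equalities. If that fails in some model, I would instead define $\lambda^\sharp$ explicitly as $w\mapsto (\pi(w),\lambda(w))$ in the fibre-product model and check it pointwise.

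For the remaining statements I would argue as follows.
\begin{enumerate}
\item By Lemma \ref{lem:alphasimplicial}, $\alpha$ is a map of $n$-fold simplicial spaces commuting with the colimit over $V\subset\rr^\infty$. Hence $\alpha$ with $\alpha^2=\id$ gives a functor $B(\zz/2)\to\CSS_n$ with value $P\cob_n(H_n)$.
\item On $P\cob_n(H_n)\langle m\rangle$, define $\alpha$ componentwise, $(M_1,\dots,M_m,\bar I)\mapsto(\alpha M_1,\dots,\alpha M_m,\bar I)$. It is well defined by Lemma \ref{lem:alphamonoidal}, since disjointness is untouched, and it squares to the identity by the computation above.
\item Lemma \ref{lem:alphamonoidal} shows that $\alpha$ commutes with each structure map $f_*$ of Theorem \ref{thm:Gammaobject}. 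Therefore $(\langle m\rangle,g)\mapsto\alpha^{g}$ composed with the $\Gamma$-structure is a functor $\Gamma\times B(\zz/2)\to\CSS_n$, which is a $(\zz/2)\Gamma$-object in the sense of Definition \ref{GammaGAction} and Remark \ref{BHGGamma}.
\end{enumerate}
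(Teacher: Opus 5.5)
Your proposal is correct and follows essentially the same route as the paper. You check $\alpha^2=\id$ simplexwise, using functoriality of the bar construction for $Bc_{\hat h}^2=\id$ and Lemma~\ref{lem:lambdainvolutive} for the trivialization, then invoke Lemmas~\ref{lem:alphasimplicial} and~\ref{lem:alphamonoidal} for the simplicial, colimit and $\Gamma$-compatibility, with your explicit unwinding of the iterated pullbacks and componentwise treatment of $P\cob_n(H_n)\langle m\rangle$ spelling out steps the paper leaves implicit.
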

 
\begin{proof}
For the classifying map,
\[
Bc_{\hat h}\circ Bc_{\hat h} \ = \ B\bigl(c_{\hat h}^{\,2}\bigr)
\ = \ B(\id_{H_n}) \ = \ \id_{BH_n},
\]
by functoriality of $B$ and \eqref{eq:intertwine}. For the trivialization, note that $\lambda$ is a self-map of the total space $\gamma_{H_n}$, and that by construction $\alpha$ composes $\triv$ with $\lambda$ read through the identification of Lemma \ref{lem:alphawelldef}. Applying $\alpha$ twice therefore composes $\triv$ with $\lambda\circ\lambda$, which is $\id_{\gamma_{H_n}}$ by Lemma \ref{lem:lambdainvolutive}; and  since $Bc_{\hat h}^{\,2}f = f$, the resulting strcuture is  $\triv$ itself. The remaining data $M$ and $\bar I$ are unchanged, so $\alpha^2 = \id$ on each simplex.
 
By Lemmas \ref{lem:alphawelldef} and \ref{lem:alphasimplicial}, $\alpha$ is an
a morphism of the $n$-uple simplicial space $P\cob_n(H_n)$, and $\alpha^2 = \id$ makes it a functor out of $B\zz/2$. 
\end{proof}
 
\begin{theorem}\label{thm: equivZ2bord}
The completion $\cob_n(H_n)$ has a strict $\zz/2$-action and is a
$\zz/2$-symmetric monoidal $n$-fold complete Segal space.
\end{theorem}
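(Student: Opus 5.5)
The plan is to transport the strict action of Proposition \ref{prop:strictaction} through the completion functor, using functoriality of the construction. By Proposition \ref{prop:strictaction} we have a functor
\[
P\cob_n(H_n)\langle -\rangle : \Gamma \times B\zz/2 \lra \CSS_n .
\]
Here $\alpha$ acts levelwise on each $P\cob_n(H_n)\langle m\rangle$ and commutes with the $\Gamma$-structure maps by Lemma \ref{lem:alphamonoidal}. Also, $\alpha$ commutes with the colimit over $V\subset \rr^\infty$ by Lemma \ref{lem:alphasimplicial}. So we first obtain the $(\zz/2)\Gamma$-object at the level of colimits, before completing.

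Next, we postcompose with the completion functor $R:\CSS_n\ra \CSS_n$, which is exactly how Theorem \ref{thm:Gammaobject} produces $\cob_n\langle m\rangle$. The key point is that we choose $R$ to be an honest functor, namely a functorial fibrant replacement in the model structure of \cite[7.2]{comparisonJulierezk}, given by the small object argument. Composing the functor $\Gamma\times B\zz/2\ra \CSS_n$ with $R$ then gives a functor $\Gamma\times B\zz/2\ra\CSS_n$. Functoriality gives $R(\alpha)^2=R(\alpha^2)=R(\id)=\id$, so the action on $\cob_n(H_n)=R(P\cob_n(H_n))$ is still strict. On the underlying $\Gamma$-object, restricting along $\Gamma\times\{\ast\}$ recovers the functor of Theorem \ref{thm:Gammaobject}.

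It remains to check the conditions of Definition \ref{CSSnMon}. These conditions only involve the underlying $\Gamma$-object: each $X(k_+)$ must be complete, $X(0_+)$ must be contractible, and the Segal maps must be equivalences. By Definition \ref{GSegalMaps}, the Segal maps are those of the underlying $\Gamma$-object, and they are automatically $\zz/2$-equivariant. Hence all three conditions follow from Theorem \ref{thm:Gammaobject} applied with $H_n$-structures. This uses the structured analogue of \cite[7.2]{claudiaDamien}, which holds by the same argument, since tangential structures on a disjoint union are given componentwise.

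The main obstacle is making sure the completion is strictly functorial, and not merely functorial up to homotopy. If the completion were only defined up to weak equivalence, the action would only be a homotopy coherent one. So one must fix a functorial replacement once and for all, applied to the diagram $\Gamma\times B\zz/2\ra\CSS_n$. An alternative is to take fibrant replacement in the projective model structure on $\Fun(\Gamma\times B\zz/2,\CSS_n)$. That works because $B\zz/2$ is finite and fibrations there are objectwise, so fibrant diagrams are objectwise complete, and their values agree up to weak equivalence with $\cob_n\langle m\rangle$.
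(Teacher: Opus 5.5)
Your proposal is correct and follows the paper's proof: both postcompose the strict $(\zz/2)\Gamma$-diagram of Proposition \ref{prop:strictaction} with a strictly functorial completion (fibrant replacement) $R$, so that $R(\alpha)^2 = R(\alpha^2) = \id$, and both reduce the conditions of Definition \ref{CSSnMon} to the underlying $\Gamma$-object of Theorem \ref{thm:Gammaobject}, whose Segal maps remain equivalences after completion. You are more explicit than the paper in noting that this step needs the $H_n$-structured analogue of \cite[7.2]{claudiaDamien}, which the paper uses only implicitly.
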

 
\begin{proof}
Since fibrant replacement  is a functor, it carries the functor $B\zz/2 \ra \CSS_n$ of Proposition \ref{prop:strictaction} to one with value $\cob_n(H_n)$, and the same for each
$\langle m\rangle$. The Segal maps of the resulting $(\zz/2)\Gamma$-object are  equivalences because the ones in Theorem \ref{thm:Gammaobject} are preserved by completion. 
\end{proof}

\section{Reflection TQFTs}

Now that we have a $\zz/2$-action on bordisms, we can start to study TQFTs which
respect such action. First, let us define what  a TQFT is.
\begin{definition}
    Let $\cob_n(G)$ be a bordism category and let $\C$ be a symmetric monoidal
    $(\infty,n)$-category. Then a \emph{fully extended $G$-topological quantum field theory}
    $Z : \cob_n(G) \ra \C$ is a natural transformation
\[\begin{tikzcd}
	\Gamma & {} & {\CSS_n} \\
	& {}
	\arrow[""{name=0, anchor=center, inner sep=0}, "{\cob_n(G)\langle - \rangle}", curve={height=-18pt}, from=1-1, to=1-3]
	\arrow[""{name=1, anchor=center, inner sep=0}, "\C"', curve={height=18pt}, from=1-1, to=1-3]
	\arrow["Z", between={0.2}{0.8}, Rightarrow, from=0, to=1]
\end{tikzcd}\]
Just as is the case for the symmetric monoidal structure, by abuse of notation we
regard the actual functor as $Z = Z(1_+) : \cob_n(G) \ra \C$.
\end{definition}

To make a $\zz/2$-equivariant version of this, we can simply use the action obtained in
the last section. We end up with a definition of the following form.

    Let $\cob_n^{\zz/2}(H_n)$ be a $\zz/2$-bordism category and let $\C$ be a
    $\zz/2$-symmetric monoidal $(\infty,n)$-category. Then a fully extended
    $\zz/2$-equivariant $H_n$-topological quantum field theory
    $Z : \cob_n^{\zz/2}(H_n) \ra \C$ is a natural transformation
\[\begin{tikzcd}
	(\zz/2)\Gamma & {} & {\CSS_n} \\
	& {}
	\arrow[""{name=0, anchor=center, inner sep=0}, "{\cob^{\zz/2}_n(H_n)\langle - \rangle}", curve={height=-18pt}, from=1-1, to=1-3]
	\arrow[""{name=1, anchor=center, inner sep=0}, "\C"', curve={height=18pt}, from=1-1, to=1-3]
	\arrow["Z", between={0.2}{0.8}, Rightarrow, from=0, to=1]
\end{tikzcd}\]

This definition would be the first thing one writes down, and it does
define an equivariant theory. It is not, however, the object we want. The reason why this is not the object we want is that so far we have treated the action strictly, but we need to deal with equivariant data up to equivalence. To fix this problem we use derived mapping spaces in an appropriate model category of of equivariant $(\infty,n)$-categories.  A derived mapping space depends on neither variable up to weak equivalence. Below we work out the details of carrying out this construction in the derived setting and end up with correct object, which is Definition \ref{def:correctZ2}.

\subsection{Equivariant functors as homotopy fixed points}\label{sec:hfp}

We would like to understand the space of equivariant theories. In this subsection we apply tools from equivariant homotopy theory to get the following theorem   in the language of complete $n$-fold Segal spaces. 

\begin{theorem}\label{thm:main}
Let $G$ be a finite group and let $\C,\D$ be $G$-symmetric monoidal
$n$-fold Segal spaces. Then there is a weak equivalence
\[
  \FunsmG(\C,\D)\simeq\Funsm(\C,\D)^{hG},
\]
where $\Funsm(\C,\D)$ is the derived mapping space as in
Definition~\ref{def:funsm} and $G$ acts on it by the conjugation action of Construction~\ref{con:enrich},
\[
  g\cdot f=\rho^{\D}_{g}\circ f\circ\bigl(\rho^{\C}_{g}\bigr)^{-1}.
\]
\end{theorem}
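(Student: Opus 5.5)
The plan is to realize both sides as derived mapping spaces in two model categories related by a Quillen adjunction, and then use the standard identification of equivariant maps out of a free object with homotopy fixed points. Concretely, let $\M$ denote the model category of functors $\Gamma\to\CSS_n$ localized so that fibrant objects are symmetric monoidal $n$-fold complete Segal spaces; then $\Funsm(\C,\D)$ is the derived mapping space $\Map^h_{\M}(\C,\D)$, computed by a cofibrant replacement $Q\C$ and a fibrant replacement $R\D$. The $G$-equivariant version will be the projective (Borel) model structure on $\Fun(BG,\M)=\Fun(\Gamma\times BG,\CSS_n)$. In it, weak equivalences and fibrations are detected underlying. Its derived mapping space is $\FunsmG(\C,\D)$. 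I would first check that this projective structure exists and is simplicial. Since $\M$ is combinatorial, simplicial and left proper, as a left Bousfield localization of the Reedy/$\CSS_n$ structure, this follows from general results on diagram categories.

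Next I would produce a cofibrant model of $\C$ in $\Fun(BG,\M)$ that is free. Take $EG$, the contractible free $G$-simplicial set, and form $EG\times Q\C$ with the diagonal action. The underlying object $EG\times Q\C$ is cofibrant in $\M$, because cofibrations are monomorphisms and products with simplicial sets preserve them. The action is free, and freeness is what projective cofibrancy requires: it is built from cells $G\times(\partial\Delta\to\Delta)\times(\text{generating cofibrations})$. The projection $EG\times Q\C\to\C$ is an underlying weak equivalence. So
\[
\FunsmG(\C,\D)\simeq \Map_{\Fun(BG,\M)}(EG\times Q\C,R\D),
\]
where $R\D$ is an equivariant fibrant replacement. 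This exists since fibrant replacement can be taken functorially, hence equivariantly, exactly as in the proof of Theorem \ref{thm: equivZ2bord}.

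Then I would compute the strict equivariant mapping space using the simplicial enrichment. By Construction~\ref{con:enrich}, the simplicial set $\Map_{\M}(Q\C,R\D)$ carries the conjugation action $g\cdot f=\rho^{R\D}_g\circ f\circ(\rho^{Q\C}_g)^{-1}$. Equivariant maps out of $EG\times Q\C$ are then the $G$-fixed points of $\Map(EG\times Q\C,R\D)\cong\Map(EG,\Map_{\M}(Q\C,R\D))$, giving
\[
\Map_{G}(EG\times Q\C,R\D)\cong \Map_G\bigl(EG,\Map_{\M}(Q\C,R\D)\bigr)=\Map_{\M}(Q\C,R\D)^{hG}.
\]
This is the homotopy fixed point space, provided $\Map_{\M}(Q\C,R\D)$ is a Kan complex, which holds by SM7. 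Finally $\Map_{\M}(Q\C,R\D)\simeq\Funsm(\C,\D)$, and the action is the stated conjugation action.

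The main obstacle will be the cofibrancy and replacement issues. The action on $Q\C$ must be strict, so I would choose $Q$ functorial, for instance via the small object argument, so that $G$ still acts on $Q\C$. I also need $EG\times Q\C$ to be projectively cofibrant, and I would try the cell argument above first. A second delicate point is the finiteness of $G$ and the requirement $X(0_+)\simeq\ast$. I would handle the latter by noting that the localization defining $\M$ is independent of the group action, so the Bousfield localization commutes with passing to $BG$-diagrams.
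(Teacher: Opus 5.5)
Your proposal is correct and follows essentially the same route as the paper: the projective structure on $BG$-diagrams, the free replacement $\C\otimes EG\to\C$, an equivariant fibrant replacement $R\D$ obtained by functoriality, and the enriched adjunction identifying $\Map_{\M^{BG}}(\C\otimes EG,R\D)$ with $\Map_{\sset^G}(EG,\Map(\C,R\D))$, which is a Kan complex by SM7. The differences are minor: the paper skips your auxiliary replacement $Q\C$, since every object of $\CSS^\otimes_n$ is already cofibrant by \ref{M2}, and it makes your freeness argument precise by filtering only along the skeleta of $EG$ and applying the untwisting isomorphism $\C\otimes(G\times K)\cong G\otimes(U\C\otimes K)$ of Lemma~\ref{lem:untwist} (Proposition~\ref{prop:cofrep}).
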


For this section $G$ denotes a finite group.

\begin{hypothesis}\label{hyp:main}
Let $\M$ be a model category satisfying:
\begin{enumerate}[label=(M\arabic*),leftmargin=2.8em]
  \item\label{M1} $\M$ is simplicial, combinatorial and left proper;
  \item\label{M2} every object of $\M$ is cofibrant.
\end{enumerate}
\end{hypothesis}

\begin{lemma}\label{lem:Mn}
The category $\CSS_n$ satisfies \ref{M1} and \ref{M2}.
\end{lemma}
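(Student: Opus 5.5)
Here $\CSS_n$ carries the model structure of \cite[7.2]{comparisonJulierezk}, whose fibrant objects are the $n$-fold complete Segal spaces. The plan is to obtain that structure as a left Bousfield localization of the injective (equivalently Reedy) model structure on $n$-fold simplicial spaces, and then check that each property in \ref{M1} and \ref{M2} survives the localization.

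First, equip $\Fun(\Delta^{\op,\times n},\sset)$ with the injective model structure. Its cofibrations are the monomorphisms and its weak equivalences are levelwise weak equivalences. Since $\sset$ is combinatorial, simplicial and left proper, with every object cofibrant, the injective structure on a diagram category in $\sset$ inherits all of these properties:
\begin{enumerate}
\item it is combinatorial by the general existence theorem for injective structures on diagrams in a combinatorial model category;
\item it is simplicial via the levelwise tensoring $X\otimes K$, $(X\otimes K)_{k_1,\dots,k_n}=X_{k_1,\dots,k_n}\times K$, since the pushout-product of monomorphisms is a monomorphism and is acyclic if either factor is;
\item it is left proper because pushouts and weak equivalences are computed levelwise and $\sset$ is left proper.
\end{enumerate}
Every object is cofibrant because $\varnothing\to X$ is always a monomorphism.

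Second, recall that the model structure on $\CSS_n$ is the left Bousfield localization of this injective structure at the set $S$ of maps encoding the Segal, globularity and completeness conditions. These are the spine inclusions $G(k)\hookrightarrow\Delta[k]$ in each direction, the essential-constancy maps, and the completeness maps $\ast\to E$ (where $E$ is the nerve of the free-living isomorphism) in each direction, tensored appropriately. This is how \cite{comparisonJulierezk} constructs it, following Rezk's construction in the case $n=1$ \cite{rezkCSS}. Because $S$ is a set, Hirschhorn's existence theorem for left Bousfield localizations \cite{hirschhorn2003model} applies to a left proper combinatorial simplicial model category. It yields a model structure that is again left proper, combinatorial and simplicial, with the same cofibrations. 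So every object remains cofibrant, which gives \ref{M2}, and \ref{M1} holds as well.

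The main point requiring care is the identification of the model structure of \cite[7.2]{comparisonJulierezk} with a left Bousfield localization of the injective structure at a \emph{set} of maps, rather than a structure built by some other method. For $n=1$ this is exactly how Rezk obtains the complete Segal space model structure in \cite{rezkCSS}, stated there with cofibrations the monomorphisms. For general $n$ one either checks Bergner--Rezk's construction or cites Barwick's iterated localization, noting that the localizing maps are finitely many families indexed by $\Delta^{\times n}$, hence a set. Once that identification is granted, the remaining verifications are formal, as above.
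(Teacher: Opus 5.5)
Your proposal is correct and follows the same route as the paper: you realize the model structure on $\CSS_n$ as a left Bousfield localization of the Reedy (equivalently injective) structure, so the cofibrations stay the monomorphisms and every object is cofibrant, and \ref{M1} is preserved by Hirschhorn's localization theorem. Your version adds detail the paper leaves implicit, namely why the base injective structure is itself simplicial, combinatorial and left proper, and why the localizing maps form a set.
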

\begin{proof}
    The model structure on $\CSS_n$ is obtained via  a left Bousfield localization of the reedy model structure, thus all objects are still cofibrant. Taking a Bousfield localization preserves $\ref{M1}$ by  \cite[Thm.~4.1.1]{hirschhorn2003model}.
\end{proof}

\begin{definition}\label{def:Msm}
Let $\Ncal:=\Fun(\Gamma,\CSS_{n})_{\mathrm{inj}}$ and let $S$ denote the set of Segal maps
tensored with a set of generating cofibrations of $\CSS_{n}$. Define
\[
  \CSS^\otimes_n := L_{S}\Ncal .
\]
\end{definition}

\begin{lemma}\label{lem:Msm}
The category $\CSS^\otimes_n$ satisfies \ref{M1} and \ref{M2}, and its fibrant objects are precisely the special
$\Gamma$-objects taking values in  $n$-fold complete Segal spaces.
\end{lemma}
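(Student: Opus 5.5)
The plan has three parts: first the structure of $\Ncal$, then the properties preserved by localization, and finally the identification of the fibrant objects.

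\emph{Step 1: the injective model structure on $\Ncal$.} Since $\CSS_n$ is combinatorial and left proper (Lemma \ref{lem:Mn}), the injective model structure on $\Ncal=\Fun(\Gamma,\CSS_n)_{\mathrm{inj}}$ exists and is combinatorial. It is left proper because cofibrations and weak equivalences are levelwise and pushouts are computed levelwise. It is simplicial via the levelwise tensoring with simplicial sets, since the pushout–product axiom can be checked levelwise for injective cofibrations. Every object of $\Ncal$ is cofibrant, because injective cofibrations are the levelwise cofibrations (monomorphisms) and every object of $\CSS_n$ is cofibrant by \ref{M2}.

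\emph{Step 2: passage to the localization.} The set $S$ is a set of morphisms between cofibrant objects, so the left Bousfield localization $L_S\Ncal$ exists by \cite[Thm.~4.1.1]{hirschhorn2003model}. That result, in the left proper combinatorial simplicial setting (via Smith's theorem), gives that $L_S\Ncal$ is again simplicial, combinatorial and left proper. It also has the same cofibrations as $\Ncal$, so \ref{M2} persists.

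\emph{Step 3: the fibrant objects.} By the general theory, the fibrant objects of $L_S\Ncal$ are the $S$-local objects, namely the injectively fibrant $X$ for which $\Map^h(s,X)$ is a weak equivalence for every $s\in S$. Injectively fibrant objects are levelwise fibrant, so each $X(k_+)$ is an $n$-fold complete Segal space. I would write the Segal map as $\varphi_k\colon \coprod_{i=1}^k \Gamma(1_+,-)\ra \Gamma(k_+,-)$ of representables. Tensoring with a cofibration $A\ra B$ of $\CSS_n$ and using the enriched Yoneda adjunction, locality against $\varphi_k\otimes A$ and $\varphi_k\otimes B$ translates into the statement that
\[
\Map(B,X(k_+))\ra \Map(B,X(1_+)^k)
\]
is a weak equivalence, and likewise with $A$ in place of $B$. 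The class of $C$ for which $\Map(C,X(k_+))\ra\Map(C,X(1_+)^k)$ is a weak equivalence is closed under homotopy colimits. Since every object of $\CSS_n$ is built from the sources and targets of the generating cofibrations, this class is all of $\CSS_n$. Taking $C=\Delta[m_1,\dots,m_n]$ then gives a levelwise equivalence between fibrant objects, i.e.\ an equivalence of complete $n$-fold Segal spaces. The converse runs the same argument backwards.

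\emph{Expected obstacle.} The main delicacy is the bookkeeping in Step 3. One must interpret ``Segal maps tensored with generating cofibrations'' precisely, so that the sources and targets of the generators determine mapping spaces. It is cleaner to tensor with the representables $\Delta[m_1,\dots,m_n]$ together with $\partial$-inclusions, which are cofibrant and detect levelwise equivalences of fibrant objects. One must also make sure the condition on $X(0_+)$ is not required: a special object, as opposed to a symmetric monoidal one, imposes no condition at $0_+$, consistent with the statement.
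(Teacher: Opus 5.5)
Your proposal is correct and follows the same route as the paper. The paper likewise gets \ref{M2} from the cofibrations being levelwise monomorphisms, gets \ref{M1} from \cite[Thm.~4.1.1]{hirschhorn2003model}, and identifies the $S$-local objects with the special $\Gamma$-objects; your Step~3 supplies the enriched Yoneda and cell-induction argument that the paper compresses into ``by definition.'' One refinement, which the paper's statement also glosses over: your converse yields only $S$-locality, not injective fibrancy, so what you actually prove is that the fibrant objects are the \emph{injectively fibrant} special $\Gamma$-objects.
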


\begin{proof}
The cofibrations are levelwise monomorphisms, so \ref{M2} holds by Lemma~\ref{lem:Mn}. By  \cite[Thm.~4.1.1]{hirschhorn2003model}, the localization of s simplicial model category is simplicial. By definition the $S$-local objects are the special $\Gamma$-objects. 
\end{proof}

\begin{definition}\label{def:funsm}
Let $R$ be a functorial fibrant replacement in  $\CSS^\otimes_n$. For $C,D\in\CSS^\otimes_n$ set
\[
  \Funsm(\C,\D):=\Map_{\CSS^\otimes_n}(\C,R\D),
\]
the derived mapping space. Since we assume \ref{M2} to be satisfied, we do not need to take a cofibrant replacement of the domain. 
\end{definition}

Now we have to worry about the $G$-action.

\begin{lemma}\label{lem:curry}
There is an isomorphism of categories

 \[\Fun\bigl(\Gamma\times BG,\ \CSS_n\bigr)\cong\Ncal^{BG}.\]

\end{lemma}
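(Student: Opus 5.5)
This is the exponential law (currying) in $\cat$, restricted to the relevant full subcategories. The cartesian closed structure of $\cat$ gives a natural isomorphism of categories
\[
  \Fun(\Gamma\times BG,\CSS_n)\cong \Fun\bigl(BG,\Fun(\Gamma,\CSS_n)\bigr).
\]
The second category is by definition $\Ncal^{BG}$, since $\Ncal$ has the same underlying category as $\Fun(\Gamma,\CSS_n)$; the injective model structure plays no role in a statement about categories. So the plan is to write the two mutually inverse functors explicitly and check that they compose to identities.

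\emph{Currying.} Send $X:\Gamma\times BG\to\CSS_n$ to the functor $\widetilde X:BG\to\Ncal$. It takes the unique object of $BG$ to the $\Gamma$-object $X(-)=X(-,\ast)$. It takes $g\in G$ to the natural transformation $\rho^X_g=X(\id,g)$ of Definition~\ref{GammaGAction}, whose component at $k_+$ is $X(\id_{k_+},g)$.

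\emph{Naturality and functoriality.} Naturality of $\rho^X_g$ in $\Gamma$ is exactly the identity
\[
  (f,e)\circ(\id,g)=(\id,g)\circ(f,e)
\]
in $\Gamma\times BG$, which is already used in Definition~\ref{GSegalMaps}. Functoriality of $\widetilde X$ in $G$ follows from $X(\id,gh)=X(\id,g)\circ X(\id,h)$. A natural transformation $\eta:X\Rightarrow Y$ is sent to the transformation whose single component is the $\Gamma$-morphism $\eta_{(-,\ast)}$; it commutes with the $\rho_g$ by naturality of $\eta$ in the $BG$ variable.

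\emph{Uncurrying.} Send $F:BG\to\Ncal$ to $\widehat F(k_+,\ast)=F(\ast)(k_+)$ on objects. On morphisms, use the factorization $(f,g)=(f,e)\circ(\id,g)$ and set
\[
  \widehat F(f,g)=F(\ast)(f)\circ F(g)_{k_+}.
\]
Functoriality of $\widehat F$ follows because $F(g)$ is natural, so the $\Gamma$-maps and $G$-maps can be commuted past each other. The two constructions are visibly inverse on objects and morphisms, so they give an isomorphism, not merely an equivalence.

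No step is a real obstacle. The only point to state explicitly is that the injective model structure on $\Ncal$ is irrelevant here, so the isomorphism is one of underlying categories. Its compatibility with model structures, for instance the projective or injective structure on $\Ncal^{BG}$, is left to later results.
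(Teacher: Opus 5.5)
Your proposal is correct and follows the paper's approach: the paper proves this in one line by citing the currying adjunction $\Fun(\C\times\D,\mathcal{E})\cong\Fun(\C,\Fun(\D,\mathcal{E}))$, and you carry out the same currying explicitly. Your checks of naturality in $\Gamma$ and of functoriality of the uncurried functor, using $(f,g)=(f,e)\circ(\id,g)$, are right, as is your remark that the injective model structure on $\Ncal$ plays no role.
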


\begin{proof}
    The result follows from the adjunction 
    \[
\Fun(\C\times \D, \mathcal{E}) \cong \Fun(\C, \Fun(\D,\mathcal{E})).    \]
\end{proof}

We equip $\Ncal^{BG}$, and more generally $\M^{BG}$ for $\M$ as in
Hypotheses~\ref{hyp:main}, with the projective model structure, whose weak equivalences and
fibrations are created by the forgetful functor $U:\M^{BG}\to\M$ induced by
$*\hookrightarrow BG$. It exists because $\M$ is combinatorial and $BG$ is finite.  Its
generating (trivial) cofibrations are the maps $G\otimes f$ for $f$ a generating (trivial)
cofibration of $\M$, where
\[
  G\otimes(-):\M\longrightarrow\M^{BG},\qquad G\otimes A=\coprod_{g\in G}A,
\]
denotes the free $G$-object functor, left adjoint to $U$.

\begin{definition}\label{def:Gsm}
A $G$-symmetric monoidal complete $n$-fold Segal defines an object of
$(\CSS^\otimes_n)^{BG}$. For  $\C,\D \in (\CSS^\otimes_n)^{BG}$, we write
\[
  \FunsmG(\C,\D)
\]
for the derived mapping space in $(\CSS^\otimes_n)^{BG}_{\mathrm{proj}}$. 
\end{definition}

The following compatibility allows us to work with $\CSS^\otimes_n$ directly rather than carrying the
Segal condition through every subsequent argument.

\begin{proposition}\label{prop:loc}
Let $\Acal$ be a left proper combinatorial simplicial model category and let $S$ be a set of
maps between cofibrant objects. Then
\[
  \bigl(L_{S}\Acal\bigr)^{BG}_{\mathrm{proj}}=L_{G\otimes S}\bigl(\Acal^{BG}_{\mathrm{proj}}\bigr),
  \qquad G\otimes S:=\{G\otimes s\ :\ s\in S\}.
\]
\end{proposition}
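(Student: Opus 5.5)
We prove the equality of model structures by showing that the two have the same underlying category, the same cofibrations, and the same fibrant objects. This suffices because a left Bousfield localization is determined by its cofibrations and its fibrant objects. Equivalently, a model structure is determined by its cofibrations and its weak equivalences, and in a localization the weak equivalences are detected by mapping into the fibrant (local) objects. Both sides are model structures on $\Acal^{BG}$. The right-hand side exists by Hirschhorn's existence theorem \cite[Thm.~4.1.1]{hirschhorn2003model}, since $\Acal^{BG}_{\mathrm{proj}}$ is again left proper, combinatorial and simplicial. The left-hand side exists because $L_S\Acal$ is combinatorial.

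\emph{Cofibrations.} The projective cofibrations of $(L_S\Acal)^{BG}$ are generated by $G\otimes I$, where $I$ is a set of generating cofibrations of $L_S\Acal$. Localization does not change cofibrations, so $I$ may be taken to be a generating set for $\Acal$. Hence these are exactly the generating cofibrations of $\Acal^{BG}_{\mathrm{proj}}$. Localizing $\Acal^{BG}_{\mathrm{proj}}$ again leaves the cofibrations unchanged, so the two classes agree.

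\emph{Fibrant objects.} An object $X$ of $(L_S\Acal)^{BG}_{\mathrm{proj}}$ is fibrant iff $UX$ is fibrant in $L_S\Acal$, i.e.\ iff $UX$ is fibrant in $\Acal$ and $S$-local. An object $X$ is fibrant in $L_{G\otimes S}(\Acal^{BG}_{\mathrm{proj}})$ iff it is projectively fibrant, i.e.\ $UX$ is fibrant in $\Acal$, and $(G\otimes S)$-local. The key step is therefore to show that
\[
\Map_{\Acal^{BG}}(G\otimes s, X)\cong \Map_{\Acal}(s, UX).
\]
This is the simplicial enrichment of the adjunction $G\otimes(-)\dashv U$. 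It holds because $G\otimes(-)$ commutes with tensoring by simplicial sets, as $G\otimes(A\otimes K)\cong (G\otimes A)\otimes K$. We also need these to be the derived mapping spaces. The domain and codomain of each $s$ are cofibrant, and $G\otimes(-)$ is left Quillen, so $G\otimes s$ is a map between projectively cofibrant objects. Since $X$ is fibrant and $U$ preserves fibrant objects, the strict simplicial mapping spaces compute the homotopy function complexes on both sides. Consequently $X$ is $(G\otimes S)$-local iff $UX$ is $S$-local, and the fibrant objects coincide.

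\emph{Weak equivalences.} With cofibrations and fibrant objects matched, the weak equivalences follow. In both structures a map $f$ is a weak equivalence iff $\Map^h(f,Z)$ is a weak equivalence for all fibrant $Z$ in the common class. The right-hand side is this by definition. For the left-hand side it holds in any simplicial model category whose cofibrations and fibrant objects are as given: the weak equivalences are exactly the maps inducing equivalences on homotopy function complexes into fibrant objects, and here these function complexes agree because the underlying simplicial category $\Acal^{BG}$ is the same.

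I expect the main obstacle to be this last step. One must make sure that the derived mapping spaces in $(L_S\Acal)^{BG}_{\mathrm{proj}}$ are computed by the same simplicial enrichment on $\Acal^{BG}$. This amounts to checking that the projective model structure on $(L_S\Acal)^{BG}$ is simplicial with the pointwise enrichment. The alternative is to argue directly, via the uniqueness statement for model structures with prescribed cofibrations and fibrant objects (Joyal's lemma).
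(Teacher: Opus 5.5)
Your proposal is correct and follows essentially the same route as the paper. Like the paper, you match the cofibrations, then match the fibrant objects through the enriched free--forgetful adjunction $\Map_{\Acal^{BG}}(G\otimes A,Y)\cong\Map_{\Acal}(A,UY)$, and conclude because a model structure (in particular a left Bousfield localization) is determined by its cofibrations and fibrant objects, which the paper cites as \cite[Proposition 10.2]{lawson2020introductionbousfieldlocalization}. Your separate paragraph on weak equivalences is therefore redundant, and the ``obstacle'' you flag is already handled by the Joyal-type uniqueness lemma you mention at the end; your check that $G\otimes s$ is a map between projectively cofibrant objects, so that the strict mapping spaces are the derived ones, is a detail the paper leaves implicit.
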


\begin{proof}
Both model structures are left Bousfield localizations of $\Acal^{BG}_{\mathrm{proj}}$: they
have the same cofibrations, since projective cofibrations depend only on the cofibrations of
the target category and $L_{S}\Acal$ has the same cofibrations as $\Acal$, and their weak
equivalences contain the projective weak equivalences. It therefore suffices to show that
they have the same fibrant objects.

An object $Y$ is fibrant in $(L_{S}\Acal)^{BG}_{\mathrm{proj}}$ if and only if $UY$ is
fibrant in $\Acal$ and $S$-local. It is fibrant in
$L_{G\otimes S}(\Acal^{BG}_{\mathrm{proj}})$ if and only if $UY$ is fibrant in $\Acal$ and
$Y$ is $(G\otimes S)$-local. The free-forgetful adjunction is simplicially enriched, so for
every $A\in\Acal$
\[
  \Map_{\Acal^{BG}}(G\otimes A,\ Y)\cong\Map_{\Acal}(A,\ UY),
\]
naturally in $A$; hence for $s\in S$ the map $\Map_{\Acal^{BG}}(G\otimes s,Y)$ is a weak
equivalence if and only if $\Map_{\Acal}(s,UY)$ is. Thus $Y$ is $(G\otimes S)$-local if and
only if $UY$ is $S$-local, and the two localizations have the same fibrant objects. Two left
Bousfield localizations of a fixed left proper combinatorial model category with the same fibrant objects coincide, since their homotopy categories can be identified with full subcategories of localized fibrant objects in the original homotopy category \cite[Proposition 10.2]{lawson2020introductionbousfieldlocalization}. 
\end{proof}

\begin{definition}
    \label{con:enrich}
Let $\M$ satisfy Hypotheses~\ref{hyp:main} let  $K$  be a $G$-simplicial set with action  
$\lambda^{K}$. Then the category of functors $\M^{BG}$ is tensored, cotensored and enriched over $\sset^G$ with $\underline{\Map}(X,Y)_{p} :=\Map_{\M}(UX,UY)_{p}$,      $g\cdot f :=\rho^{Y}_{g}\circ f\circ\bigl(\rho^{X}_{g}\otimes\id_{\Delta[p]}\bigr)^{-1}$, $ X\otimes K :=UX\otimes K$ with the diagonal action and  $Y^{K} :=(UY)^{K}$ with the conjugation action
where $\rho^{X}$ and $\rho^{Y}$ denote the given actions, $(UY)^{K}$ is the cotensor in $\M$,
and $g$ acts on it by $\rho^{Y}_{g}$ in the base together with $(\lambda^{K}_{g})^{-1}$ in
the exponent. These satisfy the two-variable adjunction

\begin{equation}\label{eq:adj}
  \Hom_{\M^{BG}}\bigl(X\otimes K,\ Y\bigr)\cong\Hom_{\sset^G}\bigl(K,\ \underline{\Map}(X,Y)\bigr).
\end{equation}
\end{definition}

\begin{lemma}\label{lem:strict}
If $X,Y\in\M^{BG}$, then there is an equality of simplicial sets

  \[\Map_{\M^{BG}}(X,Y)=\bigl(\underline{\Map}(X,Y)\bigr)^{G}\]

where the right-hand side denotes fixed points of the conjugation action.
\end{lemma}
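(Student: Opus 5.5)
The plan is to compare $p$-simplices on both sides directly. Both are simplicial sets, so it suffices to show that for every $p \geq 0$ the set of $p$-simplices of $\Map_{\M^{BG}}(X,Y)$ equals the set of $G$-fixed $p$-simplices of $\underline{\Map}(X,Y)$, compatibly with the simplicial operators. Recall that the simplicial enrichment of the functor category $\M^{BG}$ is the standard one. Its $p$-simplices are morphisms $X\otimes\Delta[p]\to Y$ in $\M^{BG}$, where $X\otimes\Delta[p]$ is $UX\otimes\Delta[p]$ with action $\rho^X_g\otimes\id_{\Delta[p]}$. So a $p$-simplex is a map $f\colon UX\otimes\Delta[p]\to UY$ in $\M$ that is natural with respect to $BG$:
\[
\rho^Y_g\circ f=f\circ(\rho^X_g\otimes\id_{\Delta[p]})\quad\text{for all } g\in G.
\]

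The first step is to note that $\Hom_{\M}(UX\otimes\Delta[p],UY)=\Map_{\M}(UX,UY)_p=\underline{\Map}(X,Y)_p$ by definition. The naturality condition above is equivalent, since each $\rho^X_g\otimes\id$ is invertible, to $\rho^Y_g\circ f\circ(\rho^X_g\otimes\id_{\Delta[p]})^{-1}=f$. This is precisely $g\cdot f=f$ for the conjugation action of Definition~\ref{con:enrich}. Hence the $p$-simplices of $\Map_{\M^{BG}}(X,Y)$ are exactly the $G$-fixed $p$-simplices of $\underline{\Map}(X,Y)$, as subsets of the same set. Equivalently, this is the case $K=\Delta[p]$ with trivial action of the adjunction \eqref{eq:adj}, since a $G$-map from a trivial $G$-set lands in the fixed points.

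The second step is simplicial compatibility. For $\theta\colon[q]\to[p]$, the face and degeneracy operators on both sides are given by precomposition with $\id_{UX}\otimes\theta_*$. This commutes with $\rho^X_g\otimes\id$ and with postcomposition by $\rho^Y_g$. Hence the $G$-action on $\underline{\Map}(X,Y)$ is simplicial, so $(\underline{\Map}(X,Y))^G$ is a sub-simplicial set, and the levelwise identification is an equality of simplicial sets.

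The only point requiring care, which I expect to be the main obstacle, is verifying that the conjugation formula really defines a left action by simplicial automorphisms. Using $(\rho^X_{gh})^{-1}=(\rho^X_h)^{-1}(\rho^X_g)^{-1}$ one gets $g\cdot(h\cdot f)=(gh)\cdot f$, and $e$ acts trivially. Beyond this, the argument is pure unwinding of definitions and needs no model-categorical input.
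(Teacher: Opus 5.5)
Your proposal is correct and follows essentially the same route as the paper: you identify the $p$-simplices of $\Map_{\M^{BG}}(X,Y)$ with $G$-equivariant maps $X\otimes\Delta[p]\to Y$ (equivalently, you apply \eqref{eq:adj} with $K=\Delta[p]$ carrying the trivial action), and you observe that equivariance is exactly being fixed by the conjugation action. Your extra checks that the conjugation formula is a left action by simplicial automorphisms, so that the fixed points form a sub-simplicial set, fill in details the paper leaves implicit.
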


\begin{proof}
Consider $K=\Delta[p]$ with the trivial action and apply Equation \ref{eq:adj}. Then  by the definition of the simplicial enrichment of a diagram category, $\Map_{\M^{BG}}(X,Y)_{p}=\Hom_{\M^{BG}}(X\otimes\Delta[p],Y)$, and a morphism of $G$-objects is exactly a $G$-fixed element of the underlying hom-set for the conjugation action.
\end{proof}

\begin{remark}\label{rem:tautology}
Note that above equality holds for any $X$ and $Y$, and on the right we have actual fixed points. The left hand side is  the derived mapping space only if $X$ is cofibrant and $Y$ fibrant in $\M^{BG}_{\mathrm{proj}}$. 
Projective cofibrations of $BG$-diagrams are generated by $G\otimes f$ with $f$ a generating cofibration, so the projectively cofibrant objects are retracts of cell complexes such that the cells carry a free $G$-action. Thus, \ref{M2} does not guarantee that objects of $\M^{BG}$ are
projectively cofibrant. For instance, the point with trivial $G$-action is not cofibrant. 
\end{remark}

As is usual in the literature, let $EG$ denote the simplicial set  $EG_{n}=G^{\times{n+1}}$ with
the diagonal action. This simplicial set is contractible and its $G$-action is
free, hence it is cofibrant in $(\sset^G)_{\mathrm{proj}} := \sset^{BG}_{\mathrm{proj}}$.

\begin{lemma}\label{lem:untwist}
Let $\C\in\M^{BG}$, $K\in\sset^G$ with trivial $G$-action, and let $G$ act on
$G\times K$ by left multiplication on the first entry. Then there is an
isomorphism of $BG$-objects
\[
  \C\otimes(G\times K)\;\cong\;G\otimes\bigl(U\C\otimes K\bigr),
\]
which is natural in $\C$ and $K$.
\end{lemma}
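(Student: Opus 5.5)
The plan is to write down an explicit "shearing" isomorphism at the level of underlying objects, check that it intertwines the two $G$-actions, and then verify naturality. By Definition~\ref{con:enrich}, the tensor $\C\otimes(G\times K)$ is the object $U\C\otimes(G\times K)$ of $\M$ with the diagonal action; $g$ acts by $\rho^{\C}_g\otimes(\ell_g\times\id_K)$, where $\ell_g$ is left multiplication. The free object $G\otimes(U\C\otimes K)=\coprod_{g\in G}U\C\otimes K$ has $G$ acting only by permuting the summands, $h\mapsto gh$, and it acts trivially inside each summand.

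First I decompose the left-hand side. Since $G$ is a discrete finite set, $G\times K\cong\coprod_{h\in G}K$ as simplicial sets. Because $U\C\otimes(-)\colon\sset\to\M$ is a left adjoint (by the simplicial structure \ref{M1}), it preserves coproducts, giving
\[
U\C\otimes(G\times K)\cong\coprod_{h\in G}U\C\otimes K .
\]
Under this identification, $g$ sends the summand indexed by $h$ to the summand indexed by $gh$, via the map $\rho^{\C}_g\otimes\id_K$.

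Next I define the untwisting map
\[
\Phi\colon\coprod_{h\in G}U\C\otimes K\lra\coprod_{h\in G}U\C\otimes K,
\]
which on the summand $h$ is $(\rho^{\C}_h)^{-1}\otimes\id_K$ into the summand $h$. Its inverse is $\rho^{\C}_h\otimes\id_K$ on each summand, so $\Phi$ is an isomorphism in $\M$. For equivariance, take $x$ in summand $h$ of the source; the claim is that $\Phi$ carries the diagonal action to the free action.
\begin{itemize}
\item Acting first by $g$ and then applying $\Phi$ gives $(\rho^{\C}_{gh})^{-1}\rho^{\C}_g\otimes\id_K$, landing in summand $gh$.
\item By functoriality of $\rho$, this equals $(\rho^{\C}_h)^{-1}\otimes\id_K$ in summand $gh$.
\item That is exactly $g$ applied to $\Phi(x)$ under the free action.
\end{itemize}
Hence $\Phi$ is a morphism in $\M^{BG}$ whose inverse is also equivariant, i.e.\ an isomorphism of $BG$-objects.

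Finally, naturality. For a map $F\colon\C\to\C'$ in $\M^{BG}$ we have $F\rho^{\C}_h=\rho^{\C'}_hF$, and $(-)\otimes K$ is functorial, so $\Phi$ is natural in $\C$. For a map $K\to K'$ naturality is clear, since $\Phi$ is the identity in the $K$-variable.

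The only genuinely delicate point is bookkeeping: matching the convention for the diagonal action in Definition~\ref{con:enrich} with left multiplication, so that the correction factor is $(\rho^{\C}_h)^{-1}$ and not $\rho^{\C}_h$. I would fix this first by checking the identity $(\rho^{\C}_{gh})^{-1}\rho^{\C}_g=(\rho^{\C}_h)^{-1}$ above; everything else is formal.
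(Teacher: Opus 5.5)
Your proposal is correct and is essentially the paper's proof: the paper also identifies both underlying objects with $\coprod_{h\in G}(U\C\otimes K)$, untwists summandwise by $\rho^{\C}_{h^{-1}}\otimes\id_K$, and checks equivariance via $\rho^{\C}_{(gh)^{-1}}\rho^{\C}_{g}=\rho^{\C}_{h^{-1}}$. You are slightly more explicit than the paper about why $U\C\otimes(G\times K)$ splits as a coproduct (the tensor is a left adjoint) and about naturality in $\C$ and $K$, both of which the paper leaves implicit.
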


\begin{proof}
By definition both sides have  as the underlying object $\coprod_{g\in G}(U\C\otimes K)$ in $\M$.
Let $\iota_h$ denote the inclusion of the summand indexed by $h$. On the left
the action is diagonal, so $g$ carries the $h$-th summand to the $gh$-th by
$\rho^{\C}_g\otimes\id_K$; on the right the action is free, so $g$ carries the
$h$-th summand to the $gh$-th by the identity. Define an isomorphism  $\varphi$ component-wise  by
\[
  \varphi_h:=\iota_h\circ\bigl(\rho^{\C}_{h^{-1}}\otimes\id_K\bigr).
\]
Then it is invertible in each component, so  that $\varphi$ is an isomorphism in $\M$. To see that it is equivariant, compute
 \[ \varphi\circ\bigl(\rho^{\C}_{g}\otimes\id_K\bigr)
  =\iota_{gh}\circ\bigl(\rho^{\C}_{(gh)^{-1}}\rho^{\C}_{g}\otimes\id_K\bigr)
  =\iota_{gh}\circ\bigl(\rho^{\C}_{h^{-1}}\otimes\id_K\bigr).
\]
which is the right-hand action applied to $\varphi_h$.
\end{proof}

\begin{proposition}\label{prop:cofrep}
Let $\M$ satisfy Hypotheses~\ref{hyp:main} and let $\C \in \M^{BG}$. Then
\[
  \C\otimes EG \longrightarrow \C\otimes *=\C
\]
is a cofibrant replacement in $\M^{BG}_{\mathrm{proj}}$.
\end{proposition}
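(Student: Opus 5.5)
We need two things: that $\C\otimes EG$ is cofibrant in $\M^{BG}_{\mathrm{proj}}$, and that the projection $\C\otimes EG\to\C\otimes *=\C$ is a weak equivalence there, i.e.\ after applying $U$.

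\emph{The weak equivalence.} By the definition of the tensoring in Definition~\ref{con:enrich}, $U(\C\otimes K)=U\C\otimes K$. So the underlying map is $U\C\otimes EG\to U\C\otimes\Delta[0]$. Since $\M$ is simplicial and $U\C$ is cofibrant by \ref{M2}, the functor $U\C\otimes(-)\colon\sset\to\M$ is left Quillen (SM7). By Ken Brown's lemma it preserves weak equivalences between cofibrant simplicial sets, and all simplicial sets are cofibrant. The map $EG\to\Delta[0]$ is a weak equivalence because $EG$ is contractible, so the projection is a weak equivalence in $\M$. Weak equivalences in the projective structure are created by $U$, so it is a weak equivalence in $\M^{BG}_{\mathrm{proj}}$.

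\emph{Cofibrancy.} We use the skeletal filtration of $EG$. Since $G$ acts freely on $EG$, the nondegenerate $p$-simplices form a free $G$-set $G\times N_p$, where $N_p$ is a set of orbit representatives. Degeneracies commute with the action, so the set of nondegenerate simplices is $G$-stable. This gives pushouts of $G$-simplicial sets
\[
\coprod_{N_p} G\times\partial\Delta[p]\longrightarrow \mathrm{sk}_{p-1}EG,\qquad
\coprod_{N_p} G\times\Delta[p]\longrightarrow \mathrm{sk}_pEG,
\]
with $G$ acting by left multiplication on the $G$ factor and trivially on $\Delta[p]$. We then apply $\C\otimes(-)$. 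It is a left adjoint by \eqref{eq:adj}, so it preserves these pushouts and the sequential colimit $EG=\colim_p\mathrm{sk}_pEG$. By Lemma~\ref{lem:untwist}, the cell attachment becomes
\[
\coprod_{N_p} G\otimes\bigl(U\C\otimes\partial\Delta[p]\to U\C\otimes\Delta[p]\bigr),
\]
and this identification is natural in $K$, so it is compatible with the attaching maps.

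The map $U\C\otimes\partial\Delta[p]\to U\C\otimes\Delta[p]$ is a cofibration in $\M$, since SM7 applies to the cofibration $\varnothing\to U\C$ and the monomorphism $\partial\Delta[p]\to\Delta[p]$. The functor $G\otimes(-)$ is left Quillen from $\M$ to $\M^{BG}_{\mathrm{proj}}$, being left adjoint to $U$, which preserves fibrations and trivial fibrations. Hence each cell map is a projective cofibration, and projective cofibrations are closed under coproducts, pushouts and transfinite composition.

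The bottom stage is $\mathrm{sk}_{-1}EG=\varnothing$, and $\C\otimes\varnothing$ is initial, so $\varnothing\to\C\otimes EG$ is a projective cofibration. Therefore $\C\otimes EG$ is cofibrant.

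The main obstacle is the cofibrancy step, namely justifying that $EG$ is built by free $G$-cells with trivially acting simplex factors. This rests on freeness of the action on nondegenerate simplices, and the diagonal $\C$-action is then untwisted cell by cell via Lemma~\ref{lem:untwist}, using its naturality to handle the attaching maps.
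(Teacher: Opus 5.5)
Your proof is correct and follows the paper's route: Ken Brown's lemma applied to the left Quillen functor $U\C\otimes(-)$ gives the weak equivalence, and a decomposition of $EG$ into free cells, untwisted cell by cell via Lemma~\ref{lem:untwist}, gives cofibrancy. You fill in details the paper leaves implicit, namely the skeletal filtration, the $G$-stability and freeness of the nondegenerate simplices, and why $G\otimes(-)$ is left Quillen, but the argument is the same.
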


\begin{proof}
Let us first show  that the map is a weak equivalence. By \ref{M2} the object $U\C$ is cofibrant, so
the pullback axiom of simplicial model categories applied to $\emptyset\to U\C$ shows that $U\C\otimes(-):\sset\to\M$ is a left Quillen functor. Every simplicial set is cofibrant, so by Ken Brown's lemma \cite{brown1973abstract}, this functor preserves all weak equivalences and applying it to $EG\to *$ yields an underlying weak equivalence, and thus a weak equivalence in the projective model structure. 

For cofibrancy, write $EG$ as a cell complex built from the free cells
$G\times\partial\Delta[q]\hookrightarrow G\times\Delta[q]$, which is possible because the
action on $EG$ is free. The functor $\C\otimes(-)$ preserves colimits, so $\C\otimes EG$ is obtained by attaching the maps
$\C\otimes(G\times\partial\Delta[q])\to \C\otimes(G\times\Delta[q])$. By
Lemma~\ref{lem:untwist} these are the maps  $G\otimes(U\C\otimes\partial\Delta[q])\to G\otimes(U\C\otimes\Delta[q])$, which are free
$G$-objects on cofibrations of $\M$ by \ref{M2} and the pullback axiom of simplicial model categories, therefore projective cofibrations.
\end{proof}

Now that we know how to make cofibrant replacements, we can finely appropriately define equivariant functors.  

\begin{definition}\label{def:correctZ2}
For $\C, \D$ a $G$-symmetric monoidal $n$-fold complete Segal space, set
\[
\Fun^{\otimes}_{G}\bigl(\C,\D)
:= \Map_{\M_n^{BG}}\bigl(\C\otimes EG,R\D),
\]
in $\M^{BG}_{\mathrm{proj}}$ where $\M_n= \CSS^{\otimes}_n$. A $G$-equivariant symmetric monoidal functor is a $0$-simplex of this simplicial set. In case the domain is a bordism category equipped with the $\zz/2$-action of Theorem \ref{thm: equivZ2bord}, then we call it a \emph{reflection TQFT}. 
\end{definition}

\begin{definition}\label{def:hfp}
For $Y$ a $G$-simplicial set whose underlying simplicial set is a Kan complex, define its \emph{homotopy fixed points} to be
\[Y^{hG}:=\Map_{\sset^G}(EG,Y),\]
so that $(Y^{hG})_{n}=\Hom_{\sset^G}(EG\times\Delta[n],Y)$ where $\Delta[n]$ is equipped with the
trivial action. 
\end{definition}

Now that all the pieces are in place we can prove Theorem \ref{thm:main}.

\begin{proof}[Proof of Theorem \ref{thm:main}]
Let $\M_n=\CSS^\otimes_n$, and let $R$ be the functorial fibrant replacement. Since $G$ acts by automorphisms, 
 $R\D$ is an object of $\M^{BG}_n$ and $\D\ra R\D$ is an equivariant weak equivalence. As
$U(R\D)$ is fibrant in $\M_n$, the object $R\D$ is fibrant in $(\M_n^{BG})_{\mathrm{proj}}$. By
Proposition~\ref{prop:cofrep} the object $\C\otimes EG$ is cofibrant, so $\FunsmG(\C,\D)=\Map_{\M_n^{BG}}(\C\otimes EG,\ R\D)$.

We check this identity levelwise. Since $\Delta[p]$ has the trivial action,
associativity of the tensoring gives $(\C\otimes EG)\otimes\Delta[p]=\C\otimes(EG\times\Delta[p])$
as $BG$-objects, and therefore
\[
  \Map_{\M_n^{BG}}\bigl(\C\otimes EG,\ R\D\bigr)_{p}
  =\Hom_{\M_n^{BG}}\bigl(\C\otimes(EG\times\Delta[p]),\ R\D\bigr)
  \cong\Hom_{\sset^G}\bigl(EG\times\Delta[p],\ \underline{\Map}(\C,R\D)\bigr)
\]
by the adjunction  in \eqref{eq:adj} applied with $K=EG\times\Delta[p]$. The right-hand side is
by definition the set of $p$-simplices of $\Map_{\sset^G}(EG,\underline{\Map}(\C,R\D))$. Hence
\[
  \FunsmG(\C,\D)\cong\Map_{\sset^G}\bigl(EG,\ \underline{\Map}(\C,R\D)\bigr)
   =\Funsm(C,D)^{hG},
\]
where the last equality  comes from  Definitions~\ref{def:funsm} and \ref{def:hfp}. The simplicial set
$\underline{\Map}(\C,R\D)=\Map_{\M}(U\C,U R\D)$ is a Kan complex by the pullback axiom of simplicial model categories. The $G$-action is the conjugation action of Construction~\ref{con:enrich}. 
\end{proof}

As an immediate corollary we get the following.

\begin{theorem}\label{z2fixedisreflection}
    Let $\C$ be a $\zz/2$-symmetric monoidal $n$-fold complete Segal space. Then 
    \[
\Fun^\otimes_{\zz/2}(\cob_n(H_n),\C) \simeq \Fun^\otimes(\cob_n({H_n}),\C)^{h\zz/2}.
    \]
    where the action on the right is the conjugation $g\cdot f= \rho_g^\D \circ f  \circ (\rho ^\C_g)^{-1}$.
\end{theorem}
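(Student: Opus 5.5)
The plan is to obtain Theorem \ref{z2fixedisreflection} as the special case $G=\zz/2$, $\C=\cob_n(H_n)$, $\D=\C$ of Theorem \ref{thm:main}. The only real work is to check that the inputs of Theorem \ref{thm:main} are available in this situation.

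First, by Theorem \ref{thm: equivZ2bord}, the completed bordism category $\cob_n(H_n)$ together with the strict involution $\alpha$ of Proposition \ref{prop:strictaction} is a $\zz/2$-symmetric monoidal $n$-fold complete Segal space. Via Lemma \ref{lem:curry} it therefore defines an object of $(\CSS^\otimes_n)^{B\zz/2}$, and the same holds for the target $\C$ by hypothesis. The group $\zz/2$ is finite, so the projective model structure on $(\CSS^\otimes_n)^{B\zz/2}$ exists. By Lemma \ref{lem:Msm}, $\CSS^\otimes_n$ satisfies Hypothesis \ref{hyp:main}, which is exactly what Proposition \ref{prop:cofrep} and the proof of Theorem \ref{thm:main} require.

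Second, I will match the two sides of the statement with those of Theorem \ref{thm:main}. The left side $\Fun^\otimes_{\zz/2}(\cob_n(H_n),\C)$ is, by Definition \ref{def:correctZ2}, the mapping space $\Map\bigl(\cob_n(H_n)\otimes E\zz/2, R\C\bigr)$. This is the object that the proof of Theorem \ref{thm:main} identifies with $\FunsmG$. The right side is $\Funsm(\cob_n(H_n),\C)$ from Definition \ref{def:funsm}, with the conjugation action of Construction \ref{con:enrich}:
\[
g\cdot f=\rho^{\C}_g\circ f\circ(\rho^{\cob}_g)^{-1}, \qquad \rho^{\cob}_g=\alpha \text{ for the generator } g.
\]
Since $\alpha^2=\id$, this action is $f\mapsto \rho^{\C}_g\circ f\circ \alpha$. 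Its homotopy fixed points, in the sense of Definition \ref{def:hfp}, are defined because the underlying space is Kan, as shown at the end of the proof of Theorem \ref{thm:main}.

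The main point to watch is notational. In the statement, $\rho^\D$ and $\rho^\C$ refer to target and source respectively, so $\rho^\D$ should be read as the action $\tau$ on $\C$, and $\rho^\C$ as $\alpha$. I will also note that Theorem \ref{thm:main} is stated for $n$-fold Segal spaces, while here both objects are complete, which is only a special case. Finally, no cofibrancy issue arises for $\cob_n(H_n)$ itself: tensoring with $E\zz/2$ is what makes the source projectively cofibrant, by Proposition \ref{prop:cofrep}. With these identifications the theorem follows immediately from Theorem \ref{thm:main}.
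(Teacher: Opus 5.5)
Your proposal is correct and follows the paper's route: the paper states this theorem as an immediate corollary of Theorem \ref{thm:main}, taking $G=\zz/2$, source $\cob_n(H_n)$ (which is $\zz/2$-symmetric monoidal by Theorem \ref{thm: equivZ2bord}) and target $\C$. You also read the notation as intended, with $\rho^{\D}$ the action on the target $\C$ and $\rho^{\C}$ the involution $\alpha$ on the bordism category.
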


We register a useful computational lemma.

\begin{lemma}\label{lem:unwind}
Let $Y$ be a Kan complex equipped with an action of $\zz/2 = \{1,g\}$. Then a $0$-simplex of
\[
Y^{h\zz/2} = \Map_{\mathcal{S}^{\zz/2}}\bigl(E\zz/2,\ Y\bigr)
\]
is equivalent to a family
\[
y_{(g_1, \dots, g_n)} \in Y_n, \ \ \ \ (g_1 ,\dots, g_n) \in (\zz/2)^n, \ \ \ n \geq 0,
\]
compatible with  face and degeneracy maps. In the first three degrees this data is:
\begin{enumerate}
\item[$n=0$:] a vertex $Z \in Y_0$;
\item[$n=1$:] an edge $\eta := y_{(g)} \in Y_1$ with $d_1\eta = Z$ and $d_0\eta = g\cdot
Z$, together with $y_{(1)} = s_0 Z$;
\item[$n=2$:] a $2$-simplex $y_{(g,g)} \in Y_2$ with
\[
d_2\,y_{(g,g)} = \eta, \qquad d_0\,y_{(g,g)} = g\cdot\eta, \qquad
d_1\,y_{(g,g)} = s_0 Z,
\]
the remaining $y_{(g_1,g_2)}$ with some $g_i = 1$ being degenerate.
\end{enumerate}
The family continues in every degree $p \geq 3$.
\end{lemma}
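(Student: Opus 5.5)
The plan is to unwind Definition \ref{def:hfp} using the explicit structure of $E\zz/2$. A $0$-simplex of $Y^{h\zz/2}$ is a $\zz/2$-equivariant simplicial map $F\colon E\zz/2 \to Y$. Since $(E\zz/2)_n = (\zz/2)^{n+1}$ carries the diagonal left action, which is free, every $n$-simplex can be written uniquely as $h\cdot(1, g_1, g_1g_2, \dots, g_1\cdots g_n)$ with $h \in \zz/2$; that is, the orbits are indexed by $(g_1,\dots,g_n)\in(\zz/2)^n$, exactly as in the bar construction. Equivariance, $F(h\cdot x) = h\cdot F(x)$, means $F$ is determined by its values
\[
y_{(g_1,\dots,g_n)} := F(1,g_1,g_1g_2,\dots,g_1\cdots g_n).
\]
Conversely, any such family can be extended equivariantly by the formula above. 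I will first record this bijection between equivariant maps $E\zz/2\to Y$ and families indexed by the normalized representatives.

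Next I will translate simpliciality of $F$ into conditions on the family. Face maps of $E\zz/2$ delete a coordinate. Deleting an interior coordinate merges $g_i, g_{i+1}$ into $g_ig_{i+1}$, and deleting the last coordinate drops $g_n$; both keep the representative normalized. Deleting the first coordinate gives $(g_1, g_1g_2,\dots)= g_1\cdot(1,g_2,\dots)$, so here equivariance gives $d_0 y_{(g_1,\dots,g_n)} = g_1\cdot y_{(g_2,\dots,g_n)}$. Degeneracies repeat a coordinate, which inserts a $1$ into the tuple. These are the bar-construction compatibilities, and they are what ``compatible with face and degeneracy maps'' means in the statement. The key point to get right is the twisted $d_0$; the rest is bookkeeping.

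Finally, I will specialize to low degrees. For $n=0$, $Z := y_{()} = F(1)$. For $n=1$, set $\eta = y_{(g)} = F(1,g)$. Then $d_1\eta = F(1) = Z$ and $d_0\eta = F(g) = g\cdot Z$, while $y_{(1)} = F(1,1) = F(s_0(1)) = s_0 Z$. For $n=2$, $y_{(g,g)} = F(1,g,1)$, and computing its faces gives
\begin{itemize}
\item $d_2 y_{(g,g)} = F(1,g) = \eta$,
\item $d_1 y_{(g,g)} = F(1,1) = s_0Z$,
\item $d_0 y_{(g,g)} = F(g,1) = g\cdot F(1,g) = g\cdot\eta$.
\end{itemize}
The other tuples, $(1,g)$, $(g,1)$ and $(1,1)$, are images of degeneracies $s_0$ or $s_1$ applied to lower normalized simplices, so their values are forced by the degeneracy compatibility. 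The higher degrees continue by the same description, which needs no separate argument. The Kan hypothesis on $Y$ plays no role in this identification.
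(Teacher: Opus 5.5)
Your proposal is correct and follows essentially the same route as the paper: freeness of the diagonal action picks out the normalized representatives $(1,g_1,g_1g_2,\dots,g_1\cdots g_n)$, the family is defined by evaluating the equivariant map there, and the low-degree identities are read off from the faces, with the twisted $d_0$ coming from equivariance. Your explicit general relations ($d_0\,y_{(g_1,\dots,g_n)} = g_1\cdot y_{(g_2,\dots,g_n)}$, interior faces multiplying adjacent entries, degeneracies inserting a $1$) usefully make precise the compatibility condition that the statement leaves implicit.
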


\begin{proof}
The action on $E\zz/2$ is free, so a orbit of $n$-simplices contains exactly one
element of the form $(1,h_1,\dots,h_p)$, and an equivariant map $\varphi \colon E\zz/2
\ra Y$ is determined by its values on them.  Setting
\[
y_{(g_1,\dots,g_n)} \;:=\; \varphi\bigl(1,\ g_1,\ g_1g_2,\ \dots,\ g_1g_2\cdots
g_n\bigr)
\]
gives the stated family. At low levels, $d_0(1,g) = (g)$ and $d_1(1,g) = (1)$,
so $d_0\eta = \varphi(g) = g\cdot\varphi(1) = g\cdot Z$ and $d_1\eta = Z$, and
$(1,1) = s_0(1)$ gives $y_{(1)} = s_0Z$. At level two, $g^2 = 1$ gives
$y_{(g,g)} = \varphi(1,g,1)$, whose faces are $(g,1)$, $(1,1)$ and $(1,g)$; the first
is $g \cdot \varphi(1,g^{-1}) = g\cdot\eta$ since $g^{-1} = g$, the second is $s_0Z$,
and the third is $\eta$.
\end{proof}

\subsection{The target category}\label{sec:target}

We now have a good homotopy theory of reflection theories theories.  As stated in Theorem \ref{thm: equivZ2bord}, we already have a desirable $\zz/2$-action on the domain. Now we need $\zz/2$-structure on a desirable target. In this section we construct the action in one class of targets, the categories of $E_n$-algebras of \cite{scheimbauer2014factorization}. 

\begin{definition}\label{def:prefac}
Let $M$ be a topological space and $(\C,\otimes,\1)$ a symmetric monoidal category. A
\emph{prefactorization algebra} $\mathcal{F}$ on $M$ valued in $\C$ consists of an object
$\mathcal{F}(U) \in \C$ for every open $U \subseteq M$, a structure map
\[
m^V_{U_1,\dots,U_k} \colon \mathcal{F}(U_1)\otimes\dots\otimes\mathcal{F}(U_k) \lra
\mathcal{F}(V)
\]
for every finite collection of pairwise disjoint opens $U_1,\dots,U_k \subseteq V$, and,
for $k = 0$, a unit map $\1 \ra \mathcal{F}(V)$, subject to unitality, equivariance for
the symmetry of $\otimes$, and associativity. To make $\mathcal{F}$ into a \emph{factorization algebra} we need to satisfy a cosheaf condition, see \cite{primerFactAyala} for details.
\end{definition}

There is an equivalence between $E_n$-algebras and locally constant factorization algebras on $\rr^n$ \cite{lurie2017higher,ginot2015notes}, which justify using them as models for our $E_n$-algebras.

In \cite{scheimbauer2014factorization}, the author defines  $\Alg_n(\C)$ as an $n$-fold complete Segal
space whose level $(k_1, \dots, k_n)$ consists of factorization algebras on $\rr^n$  with appropriate local constancy requirements. 

We now proceed to show that if $\C$ is equipped with symmetric monoidal $\zz/2$-action, then so  $\Alg_n(\C)$ is a $\zz/2$-symmetric monoidal $n$-fold complete Segal space.

\begin{proposition}\label{lem:targetaction}
Let $\tau: \C \ra \C$ denote the $\zz/2$-action symmetric monoidal action. Then post-composition with $\tau$  preserves locally constant factorization algebras on $\rr^n$
and induces a  $\zz/2$-action on $\Alg_n(\C)$, that is a functor
$B\zz/2 \ra \CSS^\otimes_n$. In particular $\Alg_n(\C)$ is a $\zz/2$-symmetric monoidal
$n$-fold complete Segal space.
\end{proposition}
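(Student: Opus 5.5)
The plan is to work level by level in Scheimbauer's model and check three things in turn: that $\tau\circ\mathcal{F}$ is again a (locally constant) factorization algebra, that this operation is compatible with all the structure maps of $\Alg_n(\C)$, and that it respects the $\Gamma$-structure. Recall that an element of $\Alg_n(\C)_{k_1,\dots,k_n}$ is a factorization algebra $\mathcal{F}$ on $\rr^n$, parametrized by a simplex of $\mathrm{Int}^n$, subject to local constancy conditions relative to the stratification determined by the intervals. Set $(\tau_*\mathcal{F})(U):=\tau(\mathcal{F}(U))$. The structure maps are
\[
m'^V_{U_1,\dots,U_k}:=\tau\bigl(m^V_{U_1,\dots,U_k}\bigr)\circ\mu,
\]
where $\mu\colon \tau\mathcal{F}(U_1)\otimes\cdots\otimes\tau\mathcal{F}(U_k)\to\tau(\mathcal{F}(U_1)\otimes\cdots\otimes\mathcal{F}(U_k))$ is the monoidal structure isomorphism of $\tau$. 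The unit map is $\1\cong\tau\1\to\tau\mathcal{F}(V)$.

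\textbf{Step 1: algebra axioms.} Unitality, $\Sigma_k$-equivariance and associativity for $\tau_*\mathcal{F}$ follow from the coherence axioms of a symmetric monoidal functor. Concretely, $\mu$ is compatible with the associator, unitors and braiding, so each axiom for $\tau_*\mathcal{F}$ is $\tau$ applied to the corresponding axiom for $\mathcal{F}$.

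\textbf{Step 2: cosheaf condition and local constancy.} These are where the real content lies, and I expect this to be the main obstacle. Local constancy asks that the structure maps $\mathcal{F}(U)\to\mathcal{F}(V)$ for suitable inclusions of discs (or stratified discs) be weak equivalences. Since $\tau$ is an automorphism with $\tau^2=\id$, it is an equivalence of categories, so it preserves and reflects weak equivalences; this handles local constancy. The Weiss cosheaf condition asks that $\mathcal{F}(U)$ be the homotopy colimit of a \v{C}ech-type diagram built from tensor products. Since $\tau$ is an equivalence it preserves homotopy colimits, and through $\mu$ it commutes with tensor products, so the descent diagram for $\tau_*\mathcal{F}$ is $\tau$ applied to the one for $\mathcal{F}$. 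The care needed is to use that $\tau$ is a symmetric monoidal equivalence of the relevant homotopical (or model/$\infty$-) category, not merely of the underlying 1-category, so that it is homotopical. I will assume this as part of what "symmetric monoidal $\zz/2$-action" means.

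\textbf{Step 3: strict functoriality and the $\Gamma$-structure.} The face and degeneracy maps of $\Alg_n(\C)$ act only on the interval data and by restriction or pushforward along maps of $\rr^n$. Postcomposition with $\tau$ commutes with these strictly, as in Lemma \ref{lem:alphasimplicial}. On spatial simplices the data are families over $|\Delta^l|_e$, and $\tau$ acts fiberwise. Strictness $\tau_*\tau_*=\id$ requires $\tau^2=\id$ as a symmetric monoidal functor, including $\tau(\mu)\circ\mu=\id$; if $\tau$ is only involutive up to isomorphism, one first strictifies. The $\Gamma$-structure on $\Alg_n(\C)$ comes from the tensor product of factorization algebras, $(\mathcal{F}\otimes\mathcal{G})(U)=\mathcal{F}(U)\otimes\mathcal{G}(U)$, and $\mu$ makes $\tau_*$ commute with it. Hence one obtains a functor $\Gamma\times B\zz/2\to\CSS_n$ whose underlying $\Gamma$-object is Scheimbauer's $\Alg_n(\C)$. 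Its Segal maps are unchanged, so they are equivalences, and each level is a complete $n$-fold Segal space. By Definition \ref{CSSnMon} and Lemma \ref{lem:curry}, this is a $\zz/2$-symmetric monoidal $n$-fold complete Segal space, that is, an object of $(\CSS^\otimes_n)^{B\zz/2}$.
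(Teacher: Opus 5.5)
Your proposal is correct and follows essentially the same route as the paper: you define $\tau_*\mathcal{F}$ by postcomposition, deduce the prefactorization axioms from $\tau$ being symmetric monoidal, get local constancy from $\tau$ being an equivalence, and get the strict $B\zz/2$-action from $\tau^2=\id$. Your version is more careful than the paper's short argument, which writes the structure maps as $\tau(m^V_{U_1,\dots,U_k})$ without the comparison $\mu$, never addresses the cosheaf condition or the simplicial structure maps, and does not note that strictness needs $\tau(\mu)\circ\mu=\id$. One small correction: the Weiss \v{C}ech diagram is built from coproducts, and tensor products enter only through multiplicativity, but your argument handles both.
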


\begin{proof}
Define $\overline{\mathcal{F}}(U) := \tau(\mathcal{F}(U))$ with structure maps
$\tau(m^V_{U_1,\dots,U_k})$. The axioms of Definition \ref{def:prefac} are equations
between morphisms that are preserved by any symmetric monoidal functor. Local constancy is preserved because $\tau$ is an equivalence. Because $\tau^2 = \id$, the assignment is a functor out
of the one-object category $B\zz/2$, and it is symmetric monoidal because $\tau$ is.
\end{proof}

Now we proceed to define a concrete example, namely by taking $\C =s\vect_\cc$, the category of super vector spaces. This is the category whose objects are $\zz/2$-graded complex vector spaces with grading preserving linear maps.

\begin{proposition}\label{prop:conjugation}
Let $\tau \colon s\vect_\cc \ra s\vect_\cc$ send a super vector space $V$ to its complex
conjugate $\overline{V}$, that is, the super vector space with same underlying real super vector space with scalar multiplication $\lambda \cdot_{\overline{V}} v := \overline{\lambda}v$. Then $\tau$ is a $\zz/2$-action.
\end{proposition}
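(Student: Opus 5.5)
The plan is to verify three things in turn. First, $\tau$ is a well-defined endofunctor of $s\vect_\cc$. Second, it is symmetric monoidal. Third, $\tau\circ\tau=\id$ strictly, so that $\tau$ gives a functor $B\zz/2 \ra$ (symmetric monoidal categories) in the strict sense used in Proposition \ref{lem:targetaction}.

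For the functor, send $V=V_0\oplus V_1$ to $\overline{V}=\overline{V_0}\oplus\overline{V_1}$. Here the underlying real super vector space and its grading are kept unchanged, and only the scalar multiplication is twisted. On morphisms, send $f\colon V\ra W$ to the same set map $\overline{f}=f$. One checks that $f$ is $\cc$-linear for the twisted structures: $f(\overline{\lambda}v)=\overline{\lambda}f(v)$. It is also grading preserving, since the grading is untouched. Functoriality is then immediate, because $\tau$ is the identity on underlying set maps.

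For the symmetric monoidal structure, I would construct the natural isomorphism
\[
\mu_{V,W}\colon \overline{V}\otimes_\cc\overline{W}\lra \overline{V\otimes_\cc W},\qquad v\otimes w\mapsto v\otimes w .
\]
Well-definedness is the step needing care. The map must be $\cc$-balanced with respect to the twisted actions: $(\overline{\lambda}v)\otimes w$ and $v\otimes(\overline{\lambda}w)$ both map to $\overline{\lambda}(v\otimes w)$ in $V\otimes W$. That element equals $\lambda\cdot_{\overline{V\otimes W}}(v\otimes w)$, as required. The map is bijective, since the same formula gives an inverse, and it preserves degree. The unit isomorphism is $\cc\ra\overline{\cc}$, $z\mapsto\overline{z}$. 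Next I would check the associativity and unit coherences; these are trivial because every map is the identity on elementary tensors. Finally comes compatibility with the Koszul braiding $v\otimes w\mapsto(-1)^{|v||w|}w\otimes v$. Since the sign is real and the grading is unchanged, conjugation commutes with it.

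For the involution property, note that $\overline{\overline{V}}$ has scalar multiplication $\lambda\mapsto\overline{\overline{\lambda}}=\lambda$ on the same underlying set. So $\overline{\overline{V}}=V$ on the nose, and $\overline{\overline{f}}=f$. The composite monoidal structure $\overline{\mu}\circ\mu$ is again the identity on elementary tensors, so $\tau^2=\id$ as symmetric monoidal functors. This is the strictness needed in Proposition \ref{lem:targetaction}. The main obstacle I expect is exactly this strictness: objects of $s\vect_\cc$ must be treated as sets-with-structure, not up to isomorphism, so that equality rather than a natural isomorphism holds. If one worked instead with a skeletal model (for example $\cc^{p|q}$), I would choose coordinatewise conjugation to identify $\overline{\cc^{p|q}}$ with $\cc^{p|q}$ and then verify that it squares to the identity.
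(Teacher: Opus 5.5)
Your proposal is correct and follows the same route as the paper: $\overline{\overline V}=V$ on the nose, the comparison $\overline V\otimes\overline W\to\overline{V\otimes W}$ is the identity on elementary tensors, and the Koszul signs are real so conjugation commutes with the braiding. You also supply detail the paper leaves implicit, including the unit comparison $\cc\to\overline{\cc}$, $z\mapsto\bar z$, which you correctly identify.

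One small inaccuracy: the unit coherence is not trivial ``because every map is the identity on elementary tensors.'' It holds because the conjugation in $z\mapsto\bar z$ cancels against the twisted scalar action inside $\ell_{\overline V}$. Likewise, it is $\bar{\bar z}=z$ that makes the composite unit constraint of $\tau^2$ the identity, a check you should state explicitly alongside $\overline{\mu}\circ\mu=\id$.
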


\begin{proof}
First note that $\overline{\overline{V}} = V$ on the nose, since conjugating twice restores the original
scalar multiplication, and the comparison $\overline{V}\otimes\overline{W} \ra
\overline{V\otimes W}$ is the identity on underlying real spaces. The only point at which
the grading enters is compatibility with the braiding $v\otimes w \mapsto
(-1)^{|v||w|}w\otimes v$, whose structure constants lie in $\{\pm 1\}\subset\rr$ and are
fixed by conjugation. The unit and associativity constraints are likewise real.
\end{proof}

\begin{corollary}
The $n$-fold space complete Segal space $s\Alg_n(\cc) := \Alg_n(s\vect_\cc)$ is is a $\zz/2$-symmetric monoidal complete $n$-fold Segal space. 
\end{corollary}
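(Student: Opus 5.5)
The corollary should follow by applying Proposition \ref{lem:targetaction} with $\C = s\vect_\cc$ and $\tau$ the complex conjugation functor of Proposition \ref{prop:conjugation}. Only the hypotheses of Proposition \ref{lem:targetaction} need checking. That proposition uses $\tau$ as a symmetric monoidal functor with $\tau^2 = \id$ strictly, since a functor $B\zz/2 \ra \CSS^\otimes_n$ needs $\tau\circ\tau$ to be the identity on the nose. It also uses that $\tau$ is an equivalence, so that local constancy is preserved.

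The plan has three steps.

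\begin{enumerate}
\item Record from Proposition \ref{prop:conjugation} that $\overline{\overline{V}} = V$ literally, and likewise on morphisms. A grading-preserving $\cc$-linear map $f\colon V\ra W$ is also $\cc$-linear as a map $\overline{V}\ra\overline{W}$, with the same underlying function. Hence $\tau^2 = \id_{s\vect_\cc}$ as a functor.
\item Check that $\tau$ is strict symmetric monoidal. The comparison $\overline{V}\otimes\overline{W}\ra\overline{V\otimes W}$ is the identity on underlying real super vector spaces, with $\overline{\lambda v}\otimes w \mapsto \overline{\lambda}(v\otimes w)$. The unit satisfies $\overline{\cc}\cong\cc$ via $z\mapsto\overline z$. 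The braiding is preserved because its signs $(-1)^{|v||w|}$ are real.
\item Observe that $\tau$ is an equivalence, indeed an isomorphism of categories, because it is its own inverse. Proposition \ref{lem:targetaction} then gives a functor $B\zz/2\ra\CSS^\otimes_n$ with value $\Alg_n(s\vect_\cc)$, that is, a $\zz/2$-symmetric monoidal $n$-fold complete Segal space in the sense of Definition \ref{CSSnMon}.
\end{enumerate}

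The main obstacle is strictness of the unit comparison. Taken literally, $\tau(\cc)$ is $\overline{\cc}$, which is only isomorphic to $\cc$. So $\tau$ is a priori strong rather than strict monoidal, and one must make sure the action on $\Alg_n$ still squares to the identity on the nose.

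I would handle this by noting that in the construction of Proposition \ref{lem:targetaction}, $\tau$ acts on factorization algebras objectwise, $\mathcal F\mapsto\tau\circ\mathcal F$, with structure maps $\tau(m)$ precomposed with the comparison isomorphisms. Applying this twice yields the comparison maps composed with their conjugates, $z\mapsto\overline{\overline z}=z$, which are identities. Hence the induced action on $\Alg_n(s\vect_\cc)$ still satisfies $\alpha^2=\id$ strictly at the level of $n$-fold simplicial spaces.

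Finally, the $\Gamma$-structure on $\Alg_n(\C)$ is given by tensor products of factorization algebras, and $\tau$ commutes with these up to the identity comparisons above. So the action is by symmetric monoidal automorphisms, which completes the proof.
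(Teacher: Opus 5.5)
Your proposal is correct and follows the paper's argument. The paper gives no separate proof: the corollary is obtained by feeding the conjugation involution of Proposition \ref{prop:conjugation} into Proposition \ref{lem:targetaction}. Your extra check that the unit comparison $\cc\ra\overline{\cc}$, $z\mapsto\overline{z}$, composes with its conjugate to the identity, so that $\tau^2=\id$ holds strictly as a symmetric monoidal functor, fills in a point the paper only glosses as ``the unit and associativity constraints are likewise real.''
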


\begin{theorem}\label{t:main}
Let $\C$ be a $\zz/2$-symmetric monoidal $n$-fold complete Segal space with action
$\tau$, and let $Z$ be a $\C$-valued reflection TQFT with symmetry type $H_n$. Then for every object $Y$ of
$\cob_n(H_n)$ there is an equivalence
\[
\tau\bigl(Z(Y)\bigr) \ \simeq\ Z(\alpha Y)
\]
in $\C$, where $\alpha$ is the involution of Proposition \ref{prop:strictaction}.
\end{theorem}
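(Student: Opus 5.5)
The plan is to extract the equivalence from the level-one data of a homotopy fixed point, via Theorem \ref{z2fixedisreflection} and Lemma \ref{lem:unwind}. A reflection TQFT $Z$ is a $0$-simplex of $\Fun^\otimes_{\zz/2}(\cob_n(H_n),\C)$. By Theorem \ref{z2fixedisreflection} (more precisely, the levelwise isomorphism from the proof of Theorem \ref{thm:main}), this is the same as a $0$-simplex of $Y^{h\zz/2}$, where
\[
Y = \Funsm(\cob_n(H_n),\C) = \underline{\Map}(\cob_n(H_n), R\C).
\]
Here $g$ acts by the conjugation $g\cdot f = \tau\circ f\circ \alpha^{-1} = \tau\circ f\circ\alpha$, using $\alpha^2=\id$ from Proposition \ref{prop:strictaction}. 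Concretely, $Z$ corresponds to an equivariant map $\varphi: E\zz/2 \ra Y$. Lemma \ref{lem:unwind} unwinds this into a vertex $Z_0 = \varphi(1)$, which is the underlying symmetric monoidal functor, together with an edge $\eta \in Y_1$ satisfying $d_1\eta = Z_0$ and $d_0\eta = \tau\circ Z_0\circ\alpha$.

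The edge $\eta$ is a $1$-simplex of the mapping space, i.e.\ a map $\cob_n(H_n)\otimes\Delta[1]\ra R\C$ in $\CSS^\otimes_n$. Restricting to $1_+$ and evaluating at an object $Y\in \cob_n(H_n)_{0,\dots,0}$ gives a path $\Delta[1]\ra (R\C)(1_+)_{0,\dots,0}$. This path runs from $Z_0(Y)$ to $\tau(Z_0(\alpha Y))$, so the two lie in the same path component of the object space. Since $R\C(1_+)$ is a complete $n$-fold Segal space, completeness identifies paths in the space of objects with equivalences: $s_0\colon X_0\ra X_{heq}$ is a weak equivalence, so a path gives an invertible $1$-morphism. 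Hence $Z(Y)\simeq \tau(Z(\alpha Y))$. Applying $\tau$ and using $\tau^2=\id$ gives $\tau(Z(Y))\simeq Z(\alpha Y)$; alternatively, one can apply the argument to $\alpha Y$. Finally, $\C\ra R\C$ is a weak equivalence between fibrant objects, hence a Dwyer--Kan equivalence, so the equivalence descends to $\C$ up to equivalence.

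The main obstacle is the bookkeeping of the enrichment. We must check that a $1$-simplex of $\underline{\Map}(\cob_n(H_n),R\C)$ really evaluates to a path in the object space of the value at $1_+$. This uses $\cob\otimes\Delta[1]$ as the tensor of the simplicial model structure on $\CSS_n$, which is levelwise product with the constant simplicial set $\Delta[1]$. We must also check that the face maps of $\eta$ match the conjugation action on the nose, so that $d_0\eta$ evaluated at $Y$ is exactly $\tau(Z(\alpha Y))$. A minor point is to interpret $\tau$ on $R\C$ compatibly, via functoriality of $R$ as in Theorem \ref{thm: equivZ2bord}, so that $\tau$ on $R\C$ restricts to $\tau$ on $\C$ up to the replacement map.
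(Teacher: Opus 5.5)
Your proposal is correct and follows essentially the same route as the paper. The edge $\eta$ supplied by Theorem \ref{thm:main} and Lemma \ref{lem:unwind}, restricted along $\{Y\}\times\Delta[1]$, gives a path in $(R\C)_{0,\dots,0}$ from $Z(Y)$ to $\tau(Z(\alpha Y))$; this path is promoted to an equivalence, and $\tau$ is then applied using $\tau^2=\id$.

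The one difference is how the path becomes an equivalence. You use completeness ($s_0\colon X_0\to X_{heq}$). The paper instead lifts $(\mathrm{const}_{Z(Y)},p)$ along the Reedy fibration $(d_1,d_0)\colon (R\C)_1\to (R\C)_0\times(R\C)_0$, starting at $\id_{Z(Y)}$, and uses that equivalences form a union of components. That argument needs only Reedy fibrancy, which your completeness argument also uses implicitly to identify the homotopy fiber of $X_{heq}\to X_0\times X_0$ with the actual space of equivalences $Z(Y)\to\tau(Z(\alpha Y))$. Your closing remark about passing from $R\C$ back to $\C$ fills in a point the paper leaves implicit.
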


\begin{proof}
By Theorem \ref{thm:main} and Lemma \ref{lem:unwind} the equivariant structure supplies
an edge $\eta$ from $Z$ to $g\cdot Z = \tau\circ Z\circ\alpha$ in the Kan complex
$\Map(\cob_n(H_n), R\C)$; that is, a map $\cob_n(H_n)\otimes\Delta[1] \ra R\C$
restricting to $Z$ and to $\tau\circ Z\circ\alpha$ at the two ends. Restricting along
$\{Y\}\times\Delta[1]$ gives a path $p$ in $(R\C)_{0,\dots,0}$ from $Z(Y)$ to
$\tau(Z(\alpha Y))$.

Since $R\C$ is Reedy fibrant, the map $(d_1,d_0)\colon (R\C)_1 \ra (R\C)_0\times(R\C)_0$
is a fibration; lifting the path $(\mathrm{const}_{Z(Y)}, p)$ starting at the vertex
$\id_{Z(Y)}$ produces a $1$-morphism $Z(Y) \ra \tau(Z(\alpha Y))$. It is an equivalence
because $\id_{Z(Y)}$ is one and the space of equivalences is a union of connected
components of $(R\C)_1$. Applying $\tau$ and using $\tau^2 = \id$ gives the statement.
\end{proof}

\begin{corollary}\label{cor:levels}
Let $Z$ be as in Theorem \ref{t:main}. Then for every $k_1,\dots,k_n$ and every simplex
$x$ of $\cob_n(H_n)$ at that level there is an equivalence
\[
\tau\bigl(Z(x)\bigr) \ \simeq\ Z(\alpha x)
\]
in $\C$. In particular this holds for closed $n$-manifolds, regarded as
$n$-endomorphisms of the monoidal unit.
\end{corollary}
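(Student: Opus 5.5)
We will mimic the proof of Theorem \ref{t:main}, replacing the object $Y$ by an arbitrary simplex $x$ at level $(k_1,\dots,k_n)$. By Theorem \ref{thm:main} and Lemma \ref{lem:unwind}, the reflection structure on $Z$ provides an edge $\eta$ in the Kan complex $\Map(\cob_n(H_n),R\C)$ from $Z$ to $\tau\circ Z\circ\alpha$. This edge is a map $\cob_n(H_n)\otimes\Delta[1]\ra R\C$ of $n$-fold simplicial spaces. Evaluating it at level $(k_1,\dots,k_n)$ and restricting to $\{x\}\times\Delta[1]$ gives a path $p$ in the space $(R\C)_{k_1,\dots,k_n}$ from $Z(x)$ to $\tau(Z(\alpha x))$. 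Thus the two simplices lie in the same path component of the level space. We then apply $\tau$ and use $\tau^2=\id$ and $\alpha^2=\id$ (Proposition \ref{prop:strictaction}) to put the statement in the stated form.

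The next step is to interpret ``equivalence in $\C$'' for a higher simplex. We take it to mean a path in the corresponding level space: this is the notion of equivalence of $k$-morphisms with fixed shape used in the complete Segal space model. If an explicit higher morphism is preferred, we argue as in Theorem \ref{t:main}. Consider the degeneracy direction: the space $(R\C)_{k_1,\dots,k_n}$ sits as the "objects" of a Segal space in an additional direction, for instance $(R\C)_{k_1,\dots,k_{i-1},\bullet,0,\dots}$ or the relevant mapping object. Reedy fibrancy makes the relevant source–target map a Kan fibration, so we lift the path $(\mathrm{const},p)$ starting at the identity $s_0 Z(x)$. The lift is an equivalence because equivalences form a union of path components.

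For the final claim, a closed $n$-manifold $W$ is an $n$-simplex at level $(1,\dots,1)$ whose boundary faces are all degenerate on the monoidal unit. The path $p$ gives an equivalence $\tau Z(W)\simeq Z(\alpha W)$ of $n$-endomorphisms of $\1$. Its boundary is constant up to the identification $\tau(\1)\simeq\1$ supplied by the symmetric monoidal structure of $\tau$ and the fact that $Z$, being a map of $\Gamma$-objects, preserves units. In a target like $s\Alg_n(\cc)$, where the top level is essentially discrete (numbers), this becomes an equality $\overline{Z(W)}=Z(\alpha W)$.

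The main obstacle is the bookkeeping of the boundary. The path $p$ need not fix the faces of $x$, so the equivalence it gives is only an equivalence of simplices in the level space, not a relative equivalence with fixed source and target. To upgrade it to an equivalence of $n$-endomorphisms of $\1$ rel boundary, we would first use the globularity and essential constancy conditions to contract the boundary paths. We would then use the fibration of the level space over its boundary data to replace $p$ by a path in the fiber.
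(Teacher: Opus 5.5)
Your core argument is exactly the paper's proof: you restrict the edge $\eta\colon \cob_n(H_n)\otimes\Delta[1]\ra R\C$ along $\{x\}\times\Delta[1]$ to get a path in the Kan complex $(R\C)_{k_1,\dots,k_n}$ from $Z(x)$ to $\tau(Z(\alpha x))$, and you take that path as the equivalence. The optional lift and the rel-boundary upgrade go beyond anything the paper attempts, and you should drop the claim that in $s\Alg_n(\cc)$ this becomes an equality of numbers: closed $n$-manifolds land in $n$-endomorphisms of the unit, which in $\Alg_n(s\vect_\cc)$ are given by objects of $s\vect_\cc$ rather than by scalars, and the paper explicitly declines any numerical statement in Remark \ref{rmk:reality}.
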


\begin{proof}
By the proof of Theorem \ref{t:main} used only that $\eta$ is a map
$\cob_n(H_n)\otimes\Delta[1] \ra R\C$. Restricting along $\{x\}\times\Delta[1]$ gives a
path in $(R\C)_{k_1,\dots,k_n}$ between $Z(x)$ and $\tau(Z(\alpha x))$, and a path in a
Kan complex is an equivalence.
\end{proof}

\begin{remark}\label{rmk:reality}
Corollary \ref{cor:levels} applied to a closed $n$-manifold $M$ is the extended
counterpart of the relation $Z(\overline{M}) = \overline{Z(M)}$ of
\cite[\S4.1]{FreedHopkinsReflection}, and it is the main evidence that Definition
\ref{def:correctZ2} is the right notion: the reality of partition functions is derived
from the equivariance datum rather than imposed. Extracting from it a statement about a
\emph{number} requires identifying $\Omega^n\C$, the space in which $Z(M)$ takes values;
this depends on the target and is not attempted here. 

\end{remark}

\subsection{An example: Clifford algebras in dimension one}\label{sec:example}

In this section we produce an illustrative example.

\begin{definition}
    Let $\mathrm{Mor}_1(\C)$ be the category whose objects are algebras and whose morphisms from an algebra $A$ to an algebra $B$ are equivalences classes of $(A, B)$-bimodules ${}_{A}M_{B}$, where ${}_{A}M_{B}$ is equivalent to ${}_{A}M'_{B}$ iff   $M \simeq M'$. For $\C = s\vect_\cc$ we just write $\mathrm{Mor}_1$. 
\end{definition}

\begin{definition}\label{def:sop}
Let $A$ be an algebra in $s\vect_\cc$ with multiplication $m \colon A\otimes A \ra A$,
and let
\[
\sigma_{A,A} \colon A \otimes A \lra A \otimes A, \qquad
\sigma_{A,A}(a\otimes b) = (-1)^{|a||b|}\, b \otimes a
\]
be the symmetry isomorphism of $s\vect_\cc$. The \emph{super-opposite algebra}
$A^{\sop}$ is the same object of $s\vect_\cc$ with multiplication
$m\circ\sigma_{A,A}$; on homogeneous elements,
\[
a\cdot_{\sop}b \ = \ (-1)^{|a||b|}\, b\,a .
\]
\end{definition}

\begin{proposition}\cite[Proposition 3.4.3]{scheimbauer2014factorization}
    There is an equivalence of symmetric monoidal categories
    \[
\H(\Alg_1(\C)) \simeq \mathrm{Mor}_1(\C).
    \]
\end{proposition}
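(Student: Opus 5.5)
The plan is to build the functor $\H(\Alg_1(\C)) \ra \mathrm{Mor}_1(\C)$ explicitly, level by level, using the description of locally constant factorization algebras on $\rr$ with prescribed stratifications. Throughout I assume, as in \cite{scheimbauer2014factorization}, that $\C$ has sifted colimits (in particular geometric realizations) and that $\otimes$ commutes with them in each variable, so that relative tensor products exist.

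\emph{Objects.} An object of $\H(\Alg_1(\C))$ is a $0$-simplex of $\Alg_1(\C)_0$, that is, a factorization algebra $\mathcal{F}$ on $\rr$ that is locally constant everywhere. I would send $\mathcal{F}$ to $A := \mathcal{F}(I)$ for a fixed open interval $I$. The product comes from the structure map $m^{I}_{I_1,I_2}$ for disjoint subintervals $I_1 < I_2$ of $I$, composed with the constancy isomorphisms $\mathcal{F}(I_j)\cong\mathcal{F}(I)$. Associativity and unitality follow from those of $m$, and different choices of $I$ give canonically isomorphic algebras. This is the $n=1$ case of the equivalence between $E_1$-algebras and locally constant factorization algebras on $\rr$ cited above.

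\emph{Morphisms.} A $0$-simplex of level $1$ is a factorization algebra on $\rr$ that is locally constant away from a marked point (or a marked collar) $p$. Evaluating on intervals to the left of $p$, to the right of $p$, and containing $p$ gives two algebras $A$ and $B$ and an object $M$. The structure maps for $I_1 < J < I_2$ with $p \in J$ make $M$ an $(A,B)$-bimodule. Next I will check that a $1$-simplex of the Kan complex $\Map(A,B)$ (a family over $|\Delta^1|_e$) induces an isomorphism of the corresponding bimodules. Conversely, every bimodule and every isomorphism of bimodules is realized, by the stratified version of the same equivalence. Together these give a bijection $\pi_0\Map(A,B) \cong \{\text{bimodules}\}/\!\simeq$, so the functor is fully faithful. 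Essential surjectivity on objects is the $E_1$ statement above.

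\emph{Composition.} This is the step I expect to be the main obstacle. A composite in $\H$ is computed by choosing a lift along the Segal map $\Alg_1(\C)_2 \ra \Alg_1(\C)_1\times_{\Alg_1(\C)_0}\Alg_1(\C)_1$, which is an equivalence, and then applying $d_1$. A $2$-simplex has two marked points $p<q$, and $d_1$ forgets the middle region. The value on a large interval $J \ni p,q$ must be identified with $M\otimes_B N$. I would prove this via the cosheaf (Weiss/excision) condition: cover $J$ by intervals around $p$, around $q$, and the middle interval. The resulting \v{C}ech-type colimit is the geometric realization of the two-sided bar construction $\mathrm{Bar}(M,B,N)$, which computes $M\otimes_B N$ by the assumption on $\C$. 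Identities correspond to ${}_AA_A$ via degeneracies.

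\emph{Symmetric monoidal structure.} The $\Gamma$-structure on $\Alg_1(\C)$ is given pointwise by $\mathcal{F}\boxtimes\mathcal{G}\colon U \mapsto \mathcal{F}(U)\otimes\mathcal{G}(U)$. Under the identifications above this is the tensor product of algebras and of bimodules, and the coherence isomorphisms are inherited from those of $\C$. This makes the functor symmetric monoidal.
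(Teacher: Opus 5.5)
The paper gives no proof of this statement; it only cites \cite[Proposition 3.4.3]{scheimbauer2014factorization}. Your outline follows the standard route of that reference: objects via locally constant factorization algebras on $\rr$ versus $E_1$-algebras, $1$-morphisms via the one-defect stratified version, composition via $d_1$ of a lift along the Segal map, and the monoidal structure objectwise. Your extra hypothesis on sifted colimits is genuinely needed, since without something like it composition in $\mathrm{Mor}_1(\C)$ is not defined.

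\textbf{The gap} is in the step you flagged, the identification $\mathcal{F}(J)\simeq M\otimes_B N$.
\begin{enumerate}
\item The cover of $J$ by an interval around $p$, a middle interval, and an interval around $q$ is not a Weiss cover. None of its members contains the finite set $\{p,q\}$, so the cosheaf axiom of a factorization algebra says nothing about this cover.
\item Even for covers where the axiom applies, the \v{C}ech colimit is formed from coproducts $\coprod \mathcal{F}(U_{i_0}\cap\dots\cap U_{i_k})$. For three connected opens this gives a colimit built out of $M\sqcup B\sqcup N$, not the tensor powers $M\otimes B^{\otimes k}\otimes N$ of $\mathrm{Bar}(M,B,N)$.
\end{enumerate}
Tensor products only enter through multiplicativity, $\mathcal{F}(U\sqcup V)\simeq\mathcal{F}(U)\otimes\mathcal{F}(V)$, on disconnected members of a Weiss cover. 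Extracting the bar construction from such a cover is the real content of $\otimes$-excision, and your sketch does not supply it.

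\textbf{One way to fill it.} Pick $p<x<y<q$ and let $f\colon J\to[0,1]$ be $0$ left of $x$, $1$ right of $y$, and linear on $[x,y]$. Then $f_*\mathcal{F}$ is a factorization algebra on $[0,1]$ with the following properties:
\begin{enumerate}
\item it is locally constant on $(0,1)$ with value $B$;
\item its values near $0$ and $1$ are $M$ and $N$, as right and left $B$-modules;
\item its global sections are $\mathcal{F}(J)$.
\end{enumerate}
What you need is the identification of the global sections of such an endpoint-stratified factorization algebra with $M\otimes_B N$ (see \cite{ginot2015notes}, or $\otimes$-excision for factorization homology \cite{primerFactAyala}). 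This is exactly where the compatibility of $\otimes$ with geometric realizations is used.

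\textbf{A related point.} By your own description, $d_1$ of a $2$-simplex has two defects in its gap, so level-$1$ simplices cannot all be single-defect. Your bijection $\pi_0\Map(A,B)\cong\{\text{bimodules}\}/\simeq$ therefore also needs every $1$-simplex to be connected to a single-defect one, for instance by a collapse of the kind above. Appealing to the one-defect stratified equivalence only handles single-defect simplices.
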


\begin{proposition}
    Let $\mathrm{Mor}_1$ be the $1$- super  Morita category. Then for any $A \in\mathrm{Mor}_1$, $A^\sop$ is a dual for $A$.
\end{proposition}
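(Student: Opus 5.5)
We exhibit explicit evaluation and coevaluation bimodules and check the zig-zag (snake) identities in $\mathrm{Mor}_1$. Since $\mathrm{Mor}_1$ is the homotopy category of $s\Alg_1(\cc)$, morphisms are isomorphism classes of bimodules, composition is relative tensor product, and the monoidal product is the graded tensor product $\otimes$ with unit $\cc$. Set
\[
\mathrm{ev} := {}_{A\otimes A^{\sop}}A_{\cc} \colon A\otimes A^{\sop} \lra \cc, \qquad
\mathrm{coev} := {}_{\cc}A_{A^{\sop}\otimes A} \colon \cc \lra A^{\sop}\otimes A .
\]
In both, $A$ is regarded as a super vector space. For $\mathrm{ev}$, the left action is $(a\otimes b)\cdot x := (-1)^{|b||x|}\, a\,x\,b$. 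For $\mathrm{coev}$, the right action is $x\cdot(b\otimes a) := (-1)^{|b||x|+|a||b|}\, b\,x\,a$, or whichever sign convention the Koszul rule forces.

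The first step is to verify that these formulas define genuine module structures. Concretely, a left $A\otimes A^{\sop}$-module is the same as an $(A,A)$-bimodule, through the standard identification of right $A$-modules with left $A^{\sop}$-modules. The only point to check is that the sign $(-1)^{|a||b|}$ built into $\cdot_{\sop}$ (Definition \ref{def:sop}) cancels against the Koszul sign from the symmetry $\sigma$ when we move $b$ past $x$. So ``being a module'' reduces to ``$A$ is an $(A,A)$-bimodule,'' which is associativity of $m$.

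The second step is to compute the snake composite
\[
A \cong A\otimes\cc \xrightarrow{\ \id\otimes \mathrm{coev}\ } A\otimes A^{\sop}\otimes A \xrightarrow{\ \mathrm{ev}\otimes\id\ } \cc\otimes A\cong A .
\]
As a bimodule this composite is
\[
\bigl({}_A A_A \otimes {}_{\cc}A_{A^{\sop}\otimes A}\bigr)\otimes_{A\otimes A^{\sop}\otimes A}\bigl({}_{A\otimes A^{\sop}}A_{\cc}\otimes {}_A A_A\bigr).
\]
We identify it with ${}_A A_A$ by a direct calculation. The balanced tensor product is of the form $(A\otimes A)\otimes_{A\otimes A^{\sop}\otimes A}(A\otimes A)$, and the multiplication map
\[
(x\otimes y)\otimes(z\otimes w) \longmapsto \pm\, x\,z\,y\,w
\]
(or the appropriate reordering) gives an isomorphism to $A$. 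Its inverse is $a\mapsto (1\otimes 1)\otimes(1\otimes a)$, and one checks that the composites with this inverse are identities using the balancing relations. The other snake identity is symmetric.

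The main obstacle is the sign bookkeeping: the associator and the braiding in $s\vect_\cc$ introduce Koszul signs, and one must make sure these are consistent throughout. If the signs become unwieldy, a fallback is to avoid them entirely with a cleaner structural argument. Namely, $\mathrm{ev}$ and $\mathrm{coev}$ are the images of the identity bimodule ${}_A A_A$ under the standard equivalences
\[
{}_A\mathrm{Bimod}_A \simeq {}_{A\otimes A^{\sop}}\mathrm{Mod} \simeq \mathrm{Mod}_{A^{\sop}\otimes A},
\]
which are valid in any symmetric monoidal category. Under these equivalences, the snake composite corresponds to ${}_A A_A\otimes_A {}_A A_A\cong {}_A A_A$, which is the identity of $A$ in $\mathrm{Mor}_1$. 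This shows $A^{\sop}$ is a dual for $A$.
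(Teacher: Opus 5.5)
The paper states this proposition without any proof; it is used as the standard fact that every super algebra is dualizable in the super Morita category. So there is no competing route to compare against. Your explicit evaluation/coevaluation argument is the standard proof, and it works once one sign is fixed. Your formula for $\mathrm{ev}$ is correct. So is the comparison map $(x\otimes y)\otimes(z\otimes w)\mapsto(-1)^{|y||z|}\,xzyw$ with inverse $a\mapsto(1\otimes 1)\otimes(1\otimes a)$; the second zig-zag is the mirror computation.

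\textbf{The coevaluation sign is wrong.} With your action $x\cdot(b\otimes a)=(-1)^{|b||x|+|a||b|}\,bxa$, take $x=b=a'=1$ and $a,b'$ odd.
\begin{itemize}
\item One side gives $\bigl(1\cdot(1\otimes a)\bigr)\cdot(b'\otimes 1)=-\,b'a$.
\item For the other side, in $A^{\sop}\otimes A$ one has $(1\otimes a)(b'\otimes 1)=-\,b'\otimes a$. Hence $1\cdot\bigl((1\otimes a)(b'\otimes 1)\bigr)=+\,b'a$.
\item For $A=\Cl_1$ and $a=b'=e$ this reads $-1\neq 1$, so your formula is not a module structure.
\end{itemize}
The correct action is $x\cdot(b\otimes a)=(-1)^{|b||x|}\,bxa$. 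This is the transport of ${}_AA_A$ through the same identification of right $A$-modules with left $A^{\sop}$-modules that you used for $\mathrm{ev}$. So your ``first step'' only goes through once the spurious factor $(-1)^{|a||b|}$ is dropped; your hedge ``whichever sign the Koszul rule forces'' is exactly this correction.

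\textbf{The ``one checks'' step is where the content is.} With the corrected action it does go through. Balancing over each of the three tensor factors of $A\otimes A^{\sop}\otimes A$ is a direct Koszul-sign check. The identity $(x\otimes y)\otimes(z\otimes w)=(-1)^{|y||z|}(1\otimes 1)\otimes(1\otimes xzyw)$ follows in four moves:
\begin{enumerate}
\item move $x$ across the first $A$;
\item move $y$ across the last $A$;
\item transfer $xz$ from the $\mathrm{ev}$-factor into the $\mathrm{coev}$-factor using the $A^{\sop}$-balancing relation;
\item move it across the last $A$ again.
\end{enumerate}
This shows the inverse is surjective, which is the nontrivial half of the isomorphism.

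\textbf{Your fallback is not an independent proof.} It only asserts that the zig-zag composite ``corresponds'' to ${}_AA_A\otimes_A{}_AA_A$. Justifying that correspondence is precisely the computation above.
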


\begin{lemma}\label{lem:ptdual}
Let $Z: \cob_n(H_n) \ra \C$ be a TQFT and let $Y$ be an object of $\Omega^{n-1} \cob_n(H_n)$, then 
\[
Z(\alpha Y)  \simeq Z(Y)^\lor.
\]
where $Z(Y)^\lor$, as an object, is a dual for $Z(Y)$ in $\H(\Omega^{n-1}\C)$.
\end{lemma}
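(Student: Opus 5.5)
Since $Z$ is symmetric monoidal, it preserves duals, so it suffices to show that $\alpha Y$ is a dual of $Y$ in $\H(\Omega^{n-1}\cob_n(H_n))$. Then $Z(\alpha Y)$ is a dual of $Z(Y)$ in $\H(\Omega^{n-1}\C)$. Duals are unique up to equivalence, which gives $Z(\alpha Y)\simeq Z(Y)^\lor$.

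\textbf{Step 1: $Z$ preserves duals.} The $n$-fold Segal space $\Omega^{n-1}\cob_n(H_n)$ of $(n-1)$-morphisms between identities of the unit is itself symmetric monoidal: the $\Gamma$-structure restricts levelwise because the Segal maps are equivalences and the unit is preserved. The functor $Z(1_+)$ then induces a symmetric monoidal functor $\H(\Omega^{n-1}\cob_n(H_n))\ra \H(\Omega^{n-1}\C)$ of ordinary symmetric monoidal categories. Any symmetric monoidal functor carries an evaluation/coevaluation pair satisfying the zig-zag identities to another such pair. This is the formal part.

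\textbf{Step 2: $\alpha Y$ is a dual of $Y$.} Write $Y$ as a closed $0$-dimensional manifold sitting in the last categorical direction, embedded in $V\times\rr$, with $H_n$-structure $(f,\triv)$ on its stabilized tangent bundle. The candidate evaluation and coevaluation are the elbow $Y\times[0,1]$ bent into a U-shape in the $n$-th coordinate direction. Going around the bend reverses the $n$-th normal direction of the collar, which is a hyperplane reflection. After conjugating by an element of $O(n)$ we may take this reflection to be $\varsigma$.

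The key computation is that restricting the $H_n$-structure of the elbow to the outgoing end gives exactly $(Bc_{\hat h}\circ f,\,(f^*\lambda^\sharp)\circ\triv)$, up to concordance. To carry it out:
\begin{itemize}
\item Transport the frame along the bend. This acts on the frame by $\varsigma$, that is, $v\mapsto\varsigma^{-1}v$ on $\rr^n$.
\item Lifting the tangential structure across the fold requires a lift of $\varsigma$ to $\hat H_n$. By Notation~\ref{not:hath} this lift is $\hat h$.
\item Consequently the classifying map is changed by $c_{\hat h}$, and the trivialization by $\lambda$ of Construction~\ref{con:lambda}.
\end{itemize}
This is precisely the content of the Freed--Hopkins identification of the orientation-reversed structure with the $\hat h$-twisted one. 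The zig-zag identities then hold because the S-shaped bordism is isotopic, rel boundary, to the cylinder. Such an isotopy gives a path in the relevant Kan complex and hence an equality in the homotopy category.

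\textbf{Main obstacle.} I expect the hardest step to be making the bending computation precise in the Calaque--Scheimbauer model. The structure there is a bundle isomorphism on $\ker D\pi_l$, and one must produce an explicit concordance between the structure induced on the reversed end and $\alpha Y$. My first attempt would be the following:
\begin{itemize}
\item Work in a collar where the elbow is $Y\times S^1_{\text{half}}$.
\item Write the rotation path from $\id$ to $\varsigma$ inside $\hat\rho_n^{-1}(\{\pm1\}\times O(n))$, using the fact that $\hat\rho_n(\hat h)=(-1,\varsigma)$ in the extended group.
\item Use Remark~\ref{rmk:choiceofh} to absorb the choice of reflection, and the dependence on which hyperplane is folded, into an inner automorphism. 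Inner automorphisms yield isomorphic, hence equivalent, objects.
\end{itemize}
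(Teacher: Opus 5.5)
Your reduction matches the paper's: symmetric monoidal functors preserve duals and duals are unique, so everything comes down to showing $\alpha Y\cong Y^\lor$ in $\H(\Omega^{n-1}\cob_n(H_n))$. The gap is Step~2, which carries all the content and is asserted rather than proved. The claim that the far end of the elbow carries $(Bc_{\hat h}\circ f,(f^*\lambda^\sharp)\circ\triv)$ up to concordance is exactly what has to be shown, and the mechanism you propose for it cannot work. There is no path from $\id$ to $\varsigma$ in $O(n)$, since $\det\varsigma=-1$. There is also no path from $1$ to $\hat h$ in $\hat H_n=\hat\rho_n^{-1}(\{\pm1\}\times O(n))$, since $\hat\rho_n(\hat h)=(-1,\varsigma)$ lies off the identity component of $\{\pm1\}\times O(n)$.

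Nor does the reflection come from transporting a frame around the bend, and the elbow needs no lift to $\hat H_n$ to exist. In the Calaque--Scheimbauer model the structure lives intrinsically on $TM\cong TY\oplus\rr\partial_t$, and the elbow simply carries the product structure. The reflection enters through the collar identifications: $\partial_t=+e_n$ at one end of the U and $\partial_t=-e_n$ at the other. So the far end is $Y$ with the same classifying map $f$ and trivialization $\triv\circ r$, where $r$ negates the normal line. Identifying $(f,\triv\circ r)$ with $\alpha Y$ is an honest isomorphism of $H_n$-structures, not a concordance along a rotation. That identification is where $\hat h$ and $\rho_n\circ c_{\hat h}=c_\varsigma\circ\rho_n$ enter, and you have not supplied it.

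The paper avoids this computation altogether. It uses \cite[8.20]{claudiaDamien} to identify $\H(\Omega^{n-1}\cob_n(H_n))$ with the classical category $\cob_{(n,n-1)}(H_n)$, and then cites \cite[4.8]{FreedHopkinsReflection} for $\alpha Y\cong Y^\lor$ there. Your remark that the bending step ``is precisely the content of the Freed--Hopkins identification'' is the right instinct. However, that result is stated in the classical $1$-category, so invoking it requires exactly the comparison \cite[8.20]{claudiaDamien}, and that link is what your write-up is missing.

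Separately, for $n>1$ the objects of $\Omega^{n-1}\cob_n(H_n)$ are closed $(n-1)$-manifolds thickened in the $n$-th direction, not $0$-manifolds. Otherwise $Y\times[0,1]$ would not be an $n$-dimensional bordism.
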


\begin{proof}
By \cite[8.20]{claudiaDamien}, $\H(\Omega^{n-1}\cob_n(H_n))\simeq \cob_{(n,n-1)}(H_n)$ and by \cite[4.8]{FreedHopkinsReflection}, $\alpha Y \cong Y^\lor$, so the result follows since symmetric monoidal functors preserve duals.
\end{proof}

\begin{lemma}\label{lem:alphapt}
Let $(H_n,\rho_n)$ be a symmetry type, so that $\rho_n(H_n) \supseteq SO(n)$
\cite[Def.~2.4]{FreedHopkinsReflection}, and let $\alpha$ be the involution of manifolds, induced by
a reflection $r \in O(n)\setminus SO(n)$ (see \cite{FreedHopkinsReflection}). Set $G = \rho_n(H_n) \supseteq SO(n)$ and $K = \ker(\rho_n)$. The following holds. 
\begin{enumerate}
\item[(i)] if $G = SO(n)$, there are exactly two classes $\pt_{+},\pt_{-}$, and
$\alpha\,\pt_{\pm} \cong \pt_{\mp}$;
\item[(ii)] if $G = O(n)$, there is a single class $\pt$, and
$\alpha\,\pt \cong \pt$.
\end{enumerate}
\end{lemma}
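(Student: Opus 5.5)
The plan is to reduce the classification of points to a computation with $H_n$-structures on the stabilized tangent bundle of a point. An object of $\cob_n(H_n)$ is a $0$-manifold $\pt$ embedded with the $n$ interval directions, so its tangent data is the trivial rank-$n$ bundle $\rr^n \to \pt$. By Lemma \ref{lem:Gstructure}, an $H_n$-structure is a pair $(f,\triv)$ with $f\colon \pt \to BH_n$ and $\triv\colon \rr^n \xrightarrow{\sim} f^*\gamma_{H_n}$. Up to isomorphism, such structures on a point form the set of orbits of $O(n)$-frames of $\rr^n$ under the right action of $H_n$ through $\rho_n$, namely $O(n)/\rho_n(H_n) = O(n)/G$. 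The internal symmetries $K$ only contribute automorphisms of a point, not new isomorphism classes, since $\pi_0 BH_n = \ast$ for connected $BH_n$. More precisely, the connected components of the groupoid of $H_n$-structures on a point are indexed by $\pi_0$ of the homotopy fiber of $BH_n \to BO(n)$, which is $O(n)/G$ as a set.

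The cases then split. If $G = SO(n)$, then $O(n)/SO(n)$ has two elements, which we name $\pt_+$ (the class of the standard frame) and $\pt_-$ (the class of the frame composed with $\varsigma$). If $G = O(n)$, then there is a single class $\pt$.

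Next we compute $\alpha$ on these classes using Construction \ref{con:alpha} and Construction \ref{con:lambda}. A point of $\gamma_{H_n}$ is written $[v,[e,g]]$, and $\lambda$ sends it to $[\varsigma^{-1}v,[Ec_{\hat h}(e),\varsigma^{-1}g\varsigma]]$. Using the second relation in \eqref{eq:relations} with $a=\varsigma$, this equals $[v,[Ec_{\hat h}(e),\varsigma^{-1}g]]$. So after the new classifying map $Bc_{\hat h}\circ f$ is identified with $f$ up to the base change, the frame class $g\,G$ is sent to $\varsigma^{-1}g\,G$; I need to double check the ordering of multiplication against the quotient convention. The base change $Ec_{\hat h}$ modifies the frame only by elements of $\rho_n(H_n)$, because $\rho_n\circ c_{\hat h} = c_\varsigma\circ\rho_n$ by \eqref{eq:intertwine}, and so it does not move the class in $O(n)/G$. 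Since $\varsigma\notin SO(n)$, left multiplication by $\varsigma$ swaps the two cosets when $G=SO(n)$, which gives $\alpha\,\pt_\pm\cong\pt_\mp$. When $G=O(n)$ there is only one class, so the claim in (ii) is automatic.

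As a consistency check in case (i), Lemma \ref{lem:ptdual} with $n=1$, together with the fact that the dual of $\pt_+$ is $\pt_-$ for oriented points, agrees with the result.

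The main obstacle is making rigorous that isomorphism classes of objects in the completed $\cob_n(H_n)$ really are computed by $\pi_0$ of this structure space modulo the embedding data. One has to check that the choice of embedding in $V\times\rr^n$ and the interval data are connected, which holds by contractibility of $\mathrm{Int}^n$ and the colimit over $V$. One also has to check that completion does not identify the two classes. For the second point I would argue that completion preserves $\pi_0$ of the homotopy category up to equivalences. The two points $\pt_+$ and $\pt_-$ are not equivalent in $\cob_n(SO(n))$, since an equivalence would have to be realized by an $H_n$-structured cylinder, and such a cylinder preserves the orientation class; this last step needs care.
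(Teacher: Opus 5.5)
Your proposal is correct and follows the same route as the paper. Both identify structures on the point with $\pi_0$ of the homotopy fiber of $B\rho_n$, namely $O(n)/G$, show that $\alpha$ acts by left multiplication by the reflection, and conclude via the determinant; your explicit computation with $\lambda$ (giving $GA \mapsto G\varsigma^{-1}A$) justifies a step the paper only asserts, the ordering you wanted to double-check is immaterial because $G \supseteq SO(n)$ is normal in $O(n)$, and the paper does not address the completion issue you flag either, since it reads ``classes'' as elements of $\pi_0$ of the homotopy fiber.
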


\begin{proof}
An $H_n$-structure on the point is a lift along $B\rho_n$ of the map
$* \ra BO(n)$ that classifies stable tangent bundle, therefore it is a point of the
homotopy fiber $F$ of $B\rho_n \colon BH_n \ra BO(n)$. 
Different structures are $\pi_0 F$. Using the fiber sequence
\[
  O(n) \lra F \lra BH_n \lra BO(n)
\]
we get that 
\[
  \pi_0 F  \cong \ O(n)/G, \ \ \ \  \pi_1 F  \cong  K ,
\]
the first because $G$ is the image of $\rho_n$ and the second because $K$ is its
kernel. In our case,  $O(n)/G$ has one or two  elements depending on $G = O(n)$ or $G = SO(n)$.  The involution $\alpha$ acts on structures by precomposing the classifying data
with $r$, hence on $O(n)/G$ by $gG \mapsto rgG$. In case (ii) the set is a single point and there is nothing to prove. In case (i) the map $O(n)/SO(n) \ra \zz/2$ given by the determinant is a bijection, and left multiplication by $r$ has $\det r = -1$, so it exchanges the two classes.
\end{proof}

\begin{notation}
    In light of Lemma \ref{lem:alphapt}, we will always choose an arbitrary point and called it $\pt_+$ and by consequence $\pt_- \simeq \alpha \pt_+$. Thus, even though they carry different symbols in the notation, it might very well be true that $\pt_+ \simeq \pt_-$.
\end{notation}

\begin{example}\label{ex:spin1}
For $H_1 = \Spin_1 = \{\pm1\}$ the map $\rho_1$ is trivial, so $\rho_1(H_1) = SO(1)$ and
case (i) applies: two objects $\pt_{\pm}$ interchanged by $\alpha$, each with
automorphism group the internal symmetry group $K = \ker\rho_1 = \zz/2$. By
\cite[(3.33)]{FreedHopkinsReflection}, $\hat H_1 = \Pin^{+}_1 \cong \zz/2\times\zz/2$.
For $H_1 = \Pin^{\pm}_1$ the map $\rho_1$ is onto $O(1)$ and case (ii) applies.
\end{example}

\begin{corollary}
    If $Z: \cob_n(H_1) \ra s\Alg_1(\cc)$  is symmetric monoidal, and $Z(\pt_+)=A$, then we must have $Z(\pt_-) \simeq A^\sop$
\end{corollary}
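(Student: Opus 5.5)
The plan is to combine Lemma \ref{lem:ptdual}, Lemma \ref{lem:alphapt} and the duality result for the super Morita category. The statement concerns dimension $n=1$, so $\Omega^{n-1}\cob_1(H_1)=\cob_1(H_1)$ and $\Omega^{n-1}s\Alg_1(\cc)=s\Alg_1(\cc)$. Here $Z$ is a symmetric monoidal functor, so I only use the non-equivariant structure, not any reflection data.

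First, by the Notation following Lemma \ref{lem:alphapt}, $\pt_-$ is by definition $\alpha\pt_+$; in the case $H_1=\Spin_1$ of Example \ref{ex:spin1} it is genuinely the other point. Applying Lemma \ref{lem:ptdual} with $Y=\pt_+$ gives
\[
Z(\pt_-) \simeq Z(\alpha\pt_+)\simeq Z(\pt_+)^\lor = A^\lor
\]
in $\H(s\Alg_1(\cc))$. Next I pass through the equivalence $\H(\Alg_1(\C))\simeq \mathrm{Mor}_1(\C)$ of \cite[Proposition 3.4.3]{scheimbauer2014factorization} with $\C=s\vect_\cc$. Because this is an equivalence of symmetric monoidal categories, it carries duals to duals. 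In $\mathrm{Mor}_1$ the proposition on duals says that $A^\sop$ is a dual of $A$.

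Duals in a symmetric monoidal category are unique up to unique isomorphism compatible with the evaluation and coevaluation. Hence $A^\lor\cong A^\sop$ in $\mathrm{Mor}_1$, and therefore in $\H(s\Alg_1(\cc))$. An isomorphism in the homotopy category of a complete Segal space lifts to an equivalence, that is, a point of the space of homotopy equivalences. This gives $Z(\pt_-)\simeq A^\sop$ in $s\Alg_1(\cc)$.

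The main obstacle is bookkeeping rather than mathematics. Lemma \ref{lem:ptdual} asserts $Z(\alpha Y)\simeq Z(Y)^\lor$ only up to the choice of dual, so uniqueness of duals must be invoked explicitly. One must also check that the equivalence in $\mathrm{Mor}_1$, which is a Morita-type invertible bimodule, is what "$\simeq$" means in $s\Alg_1(\cc)$. I would settle this by noting that equivalences in $s\Alg_1(\cc)$ are by definition detected in $\H(s\Alg_1(\cc))$, so no strict algebra isomorphism is being claimed.
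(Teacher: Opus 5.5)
Your argument is correct and matches the paper's implicit reasoning (the corollary is stated without a written proof): combine Lemma \ref{lem:ptdual} at $n=1$ with $\pt_-\simeq\alpha\pt_+$ and the fact that $A^{\sop}$ is a dual of $A$ in $\mathrm{Mor}_1\simeq\H(s\Alg_1(\cc))$. Your explicit appeal to uniqueness of duals, and to lifting an isomorphism in the homotopy category to an equivalence, spells out steps the paper leaves unstated.
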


\begin{corollary}\label{cor:cliff}
If $Z$ admits a reflection structure in the sense of Definition
\ref{def:correctZ2}, and $Z(\pt_+)=A$, then there is an equivalence
\[
\overline{A} \ \simeq\ A^{\sop}
\]
in $s\Alg_1(\cc)$.
\end{corollary}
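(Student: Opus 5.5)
The plan is to combine Theorem \ref{t:main} with the preceding corollary on duals, working with $n=1$, $H_1=\Spin_1$ and target $\C=s\Alg_1(\cc)$. The latter carries the $\zz/2$-action $\tau$ induced by complex conjugation (Proposition \ref{prop:conjugation} and Proposition \ref{lem:targetaction}). On objects, $\tau$ sends a super algebra $A$ to $\overline{A}$. This holds because $\tau$ acts by postcomposition on factorization algebras on $\rr$. The underlying algebra of a locally constant factorization algebra on $\rr$ is $\mathcal{F}(U)$ for an interval $U$, and conjugating it gives $\overline{\mathcal{F}(U)}$, with the conjugated multiplication.

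First, apply Theorem \ref{t:main} to the object $Y=\pt_+$ of $\cob_1(\Spin_1)$. This gives an equivalence $\tau(Z(\pt_+))\simeq Z(\alpha\,\pt_+)$ in $s\Alg_1(\cc)$, i.e.\ $\overline{A}\simeq Z(\alpha\,\pt_+)$. Second, by Example \ref{ex:spin1}, case (i) of Lemma \ref{lem:alphapt} applies, so $\alpha\,\pt_+\cong\pt_-$ in $\cob_1(\Spin_1)$. An isomorphism in the homotopy category of a complete Segal space lifts to an equivalence, and $Z$ preserves equivalences, so $Z(\alpha\,\pt_+)\simeq Z(\pt_-)$. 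Third, the preceding corollary gives $Z(\pt_-)\simeq A^{\sop}$; it follows from Lemma \ref{lem:ptdual} with $n=1$ together with the fact that $A^{\sop}$ is a dual of $A$ in $\mathrm{Mor}_1$. Chaining the three equivalences yields $\overline{A}\simeq A^{\sop}$.

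The main obstacle is the mismatch of levels of equivalence. Lemma \ref{lem:ptdual} and the duality statement only identify $Z(\pt_-)$ with $A^{\sop}$ up to uniqueness of duals in $\H(s\Alg_1(\cc))\simeq\mathrm{Mor}_1$. That means a Morita equivalence, i.e.\ an invertible bimodule, not an algebra isomorphism. By contrast, Theorem \ref{t:main} produces an equivalence in the complete Segal space $R\C$.

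The approach is to interpret every "$\simeq$" as an equivalence in $s\Alg_1(\cc)$, equivalently an invertible $1$-morphism, and to check that each step produces exactly that. Theorem \ref{t:main} gives a $1$-morphism that is an equivalence. Passing to $\H(s\Alg_1(\cc))$, where equivalences become isomorphisms, this is an isomorphism in $\mathrm{Mor}_1$. Duals are unique up to unique isomorphism there, so $Z(\pt_-)\cong A^{\sop}$ in $\mathrm{Mor}_1$. Completeness of $s\Alg_1(\cc)$, together with the fact that isomorphisms in the homotopy category come from equivalences, then lifts every link of the chain to an equivalence in $s\Alg_1(\cc)$, as the statement requires.

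A secondary check is that the equivalence $\tau(Z(\pt_+))\simeq Z(\alpha\pt_+)$ is taken in $R\C$ rather than in $\C$. Since $R$ is a fibrant replacement, $\C\to R\C$ is a Dwyer–Kan equivalence, so equivalences between objects in the image of $\C$ reflect back to $\C$.
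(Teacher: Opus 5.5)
Your proposal is correct and follows the route the paper intends; the corollary is stated there without a separate proof. The argument is: apply Theorem \ref{t:main} at $Y=\pt_+$ to get $\overline{A}\simeq Z(\alpha\,\pt_+)\simeq Z(\pt_-)$, then identify $Z(\pt_-)\simeq A^{\sop}$ via Lemma \ref{lem:ptdual} and the duality of $A^{\sop}$ with $A$, reading every $\simeq$ as an equivalence in $s\Alg_1(\cc)$, i.e.\ a Morita equivalence. Your bookkeeping about levels of equivalence is sound, but lifting an isomorphism of the homotopy category to an equivalence needs only the Segal-space definition of equivalences, not completeness.
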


 We  will now focus on the case $Z(\pt_+)=\Cl_1$. The algebra $\Cl_1$ is not chosen arbitrarily: it is the value at the point of the parity
theory appearing in Gunningham's work on spin Hurwitz numbers \cite{gunningham}, whose partition function on spin surfaces computes $(-1)^{\mathrm{Arf}}$. That theory is two-dimensional, and the example treated here is its one-dimensional shadow.

\begin{notation}\label{not:input}
Let $Z_{\Cl_1} : \cob_1(\Spin_1) \ra \Alg_1(s\vect_\cc)$ be a symmetric monoidal functor
with
\[
Z_{\Cl_1}(\pt_+) \cong \Cl_1  :=  \cc\langle e\rangle/(e^2-1), \ \ \  |e| = 1 .
\]
\end{notation}

\begin{lemma}
    If $Z_{\Cl_1}$ admits a reflection structure, then we must have 
    \[\overline{\Cl_1} \ \simeq\ \Cl_1^{\sop}.\]
\end{lemma}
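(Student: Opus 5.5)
This is a direct specialization of Corollary \ref{cor:cliff} to $A=\Cl_1$. I will nonetheless lay out the chain of equivalences explicitly, so the example is self-contained.

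First, apply Theorem \ref{t:main} with $\C=s\Alg_1(\cc)$. This target carries the conjugation action $\tau=\overline{(-)}$ of Proposition \ref{prop:conjugation}, which is promoted to a $\zz/2$-symmetric monoidal structure by Proposition \ref{lem:targetaction}. For the object $Y=\pt_+$ the theorem gives
\[
\overline{Z_{\Cl_1}(\pt_+)}\ \simeq\ Z_{\Cl_1}(\alpha\,\pt_+).
\]
Second, identify $\alpha\,\pt_+$. Example \ref{ex:spin1} places $H_1=\Spin_1$ in case (i) of Lemma \ref{lem:alphapt}, so $\alpha\,\pt_+\cong\pt_-$. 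Lemma \ref{lem:ptdual} (with $n=1$, so $\Omega^0$ is the category itself) identifies $Z(\pt_-)$ with a dual of $Z(\pt_+)$ in $\H(s\Alg_1(\cc))\simeq\mathrm{Mor}_1$.

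Third, in the super Morita category $\Cl_1^{\sop}$ is a dual of $\Cl_1$. Duals are unique up to equivalence, so $Z(\pt_-)\simeq\Cl_1^{\sop}$ in $\mathrm{Mor}_1$. This is the preceding corollary on $Z(\pt_-)$. Chaining these gives $\overline{\Cl_1}\simeq\Cl_1^{\sop}$. The equivalence from Theorem \ref{t:main} lives in $R\C$, while the duality statement lives in the homotopy category. Equivalences in a complete Segal space descend to isomorphisms in $\H$, so the comparison is consistent, and by completeness the result lifts back to an equivalence in $s\Alg_1(\cc)$.

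The main obstacle I expect is bookkeeping rather than mathematics. The equivalence produced by Theorem \ref{t:main} is an equivalence of $1$-morphisms in $R\C$. I have to check that it maps to an isomorphism in $\mathrm{Mor}_1$, i.e.\ an invertible bimodule, compatibly with the identification $\H(\Alg_1)\simeq\mathrm{Mor}_1$ of Scheimbauer's Proposition 3.4.3. I would verify this first by noting that the Dwyer--Kan equivalence $\C\to R\C$ induces an equivalence on homotopy categories.

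As a sanity check, the conclusion is consistent. $\overline{\Cl_1}\cong\Cl_1$, since the relation $e^2=1$ is real. Also $\Cl_1^{\sop}=\cc\langle e\rangle/(e^2+1)\cong\Cl_1$ via $e\mapsto ie$. So the equivalence holds, even as an isomorphism of algebras.
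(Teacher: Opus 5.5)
Your proposal is correct and follows the paper's own route: the lemma is Corollary \ref{cor:cliff} specialized to $A=\Cl_1$, obtained by chaining Theorem \ref{t:main}, Lemma \ref{lem:alphapt}(i), and the identification $Z(\pt_-)\simeq A^{\sop}$ via Lemma \ref{lem:ptdual} and duality in $\mathrm{Mor}_1$. Your closing check via Lemma \ref{lem:clsop} also shows that the conclusion holds even without the reflection hypothesis.
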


\begin{lemma}\label{lem:clsop}
We compute that
$\Cl_1^{\sop} \cong \Cl_{-1} := \cc\langle e\rangle/(e^2+1)$.
Over $\cc$ we have  an isomorphism $\Cl_{-1} \cong \Cl_1$ via $e \mapsto ie$, and $\overline{\Cl_1}\cong\Cl_1$.
\end{lemma}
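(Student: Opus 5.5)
The plan is to compute all three claims directly from presentations by generators and relations. These are finite-dimensional super algebras, so every step is a short check.

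\emph{Step 1: $\Cl_1^{\sop}\cong\Cl_{-1}$.} By Definition \ref{def:sop}, $\Cl_1^{\sop}$ has the same underlying super vector space $\cc\cdot 1\oplus\cc\cdot e$, with $1$ even and $e$ odd, and multiplication $a\cdot_{\sop}b=(-1)^{|a||b|}ba$. Since $|e|=1$, we get $e\cdot_{\sop}e=-e^2=-1$. The unit is unchanged, and products involving $1$ are unaffected because $|1|=0$. Hence $\Cl_1^{\sop}$ is generated by one odd element squaring to $-1$. The assignment $e\mapsto e$ from the free algebra $\cc\langle e\rangle$ factors through $(e^2+1)$ and is bijective on the basis $\{1,e\}$, so it is a grading-preserving algebra isomorphism $\Cl_{-1}\ra\Cl_1^{\sop}$.

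\emph{Step 2: $\Cl_{-1}\cong\Cl_1$ over $\cc$.} Define $\Cl_{-1}\ra\Cl_1$ by $e\mapsto ie$. This is well defined because $(ie)^2=-e^2=-1$ in $\Cl_1$. It preserves the grading because $ie$ is odd. It is bijective on bases because $i\neq 0$. This is the one place where the complex scalars are used; over $\rr$ the two algebras are not isomorphic.

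\emph{Step 3: $\overline{\Cl_1}\cong\Cl_1$.} By Proposition \ref{prop:conjugation}, $\overline{\Cl_1}$ has the same underlying real super vector space and the same multiplication, only with conjugated scalar action. The relation $e^2=1$ has real coefficients, so it still holds. Define the map $\overline{\Cl_1}\ra\Cl_1$ on underlying real spaces by $x+ye\mapsto\bar x+\bar y e$. It is $\cc$-linear from the conjugated structure, it preserves the grading, and it is multiplicative: multiplicativity reduces to $e^2=1$ together with the fact that complex conjugation is a ring automorphism of $\cc$. So it is an isomorphism of super algebras.

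The main subtlety I expect is not computational. It is being careful that the ``conjugated'' algebra in Step 3 is identified with $\Cl_1$ via the antilinear identity map, rather than being confused with a merely real isomorphism. Once this is in place, all three isomorphisms are isomorphisms of algebras and hence equivalences in $s\Alg_1(\cc)$. Combining them gives $\overline{\Cl_1}\cong\Cl_1\cong\Cl_{-1}\cong\Cl_1^{\sop}$, which is consistent with Corollary \ref{cor:cliff}.
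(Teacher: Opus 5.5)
Your proof is correct and follows essentially the same route as the paper: the sign $e\cdot_{\sop}e=-1$ determines $\Cl_1^{\sop}$, the rescaling $e\mapsto ie$ gives $\Cl_{-1}\cong\Cl_1$, and the antilinear map $x+ye\mapsto\bar x+\bar y e$, which fixes the real form $\rr\oplus\rr e$, gives $\overline{\Cl_1}\cong\Cl_1$. Your version simply spells out the well-definedness, linearity and multiplicativity checks that the paper leaves implicit.
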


\begin{proof}
Since $|e| = 1$, Definition \ref{def:sop} gives $e\cdot_{\sop}e = (-1)^{1\cdot1}ee = -1$ and 
whereas $e\cdot_{\op}e = ee = +1$. Since $\Cl_1$ is generated by $e$ this determines both algebras. The substitution $e\mapsto ie$ carries $e^2 = -1$ to $e^2 = 1$. For the last claim, $\Cl_1$ has the real form $\rr\oplus\rr e$, and the antilinear map fixing $1$ and
$e$ is an isomorphism of $\cc$-algebras $\overline{\Cl_1}\cong \Cl_1$.
\end{proof}

\begin{definition}\label{def:stellar}
An \emph{antilinear stellar structure} on an  complex super-algebra  $A$  is an antilinear bijection $\theta$ of $A$ with $\theta(1) = 1$ and
\begin{equation}\label{eq:theta}
    \theta(ab) \ = \ (-1)^{|a||b|}\,\theta(b)\,\theta(a).
\end{equation}
This is equivalent to an isomorphism $\overline{A} \cong A^{\sop}$ of algebras in $s\vect_\cc$. Stellar algebras were introduced by Schommer-Pries in \cite{SchommerPries} and we use the $\cc$-antilinear version of of M\"uller and Stehouwer defined in  \cite[\S5.2]{MullerStehouwer}.
\end{definition}

\begin{proposition}\label{prop:clifford}
For $A = \Cl_1$ there are exactly two stellar structures,  $\theta_{\pm}(e) = \pm ie$.  They both satisfy $\theta_\pm^2 = \id$. Furthermore,  the pair is a torsor under $\Aut(\Cl_1) =\{\id,(-1)^F\}$.
\end{proposition}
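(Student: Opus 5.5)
The plan is a direct computation, using that $\Cl_1$ is generated by the single odd element $e$. By Definition \ref{def:stellar}, a stellar structure is the same as an isomorphism $\overline{A}\cong A^{\sop}$ in $s\vect_\cc$. So I read $\theta$ as a grading-preserving, antilinear, unital bijection satisfying \eqref{eq:theta}. The steps are: pin down $\theta$ on the generator, verify the candidates, then compute $\theta^2$ and the torsor action.

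\emph{Step 1 (constraints).} Every element of $\Cl_1$ is uniquely $a+be$ with $a,b\in\cc$, and antilinearity together with $\theta(1)=1$ gives
\[
\theta(a+be)=\overline{a}+\overline{b}\,\theta(e).
\]
So $\theta$ is determined by $\theta(e)$. Since $\theta$ preserves the grading and the odd part of $\Cl_1$ is $\cc e$, we have $\theta(e)=ce$ for some $c\in\cc^\times$. Applying \eqref{eq:theta} to $a=b=e$ gives
\[
1=\theta(1)=\theta(e\cdot e)=(-1)^{1\cdot 1}\theta(e)\theta(e)=-c^2e^2=-c^2 .
\]
Hence $c=\pm i$, so there are at most the two candidates $\theta_\pm(e)=\pm ie$.

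\emph{Step 2 (existence).} I will check that $\theta_\pm$, defined by the formula above, satisfy \eqref{eq:theta}. The map is an antilinear bijection, since $c\neq0$. By biadditivity and antilinearity of both sides, it suffices to check homogeneous basis elements $a,b\in\{1,e\}$. Pairs involving $1$ are trivial because $\theta(1)=1$ and the sign is $+1$. The pair $(e,e)$ is exactly the computation in Step 1, now read in reverse. This is the only point needing care. Because both sides are antilinear in each variable, the reduction to basis elements is legitimate: the scalars come out conjugated on both sides in the same way. This step is routine, and there is no genuine obstacle anywhere in the proof.

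\emph{Step 3 ($\theta^2$ and the torsor).} First, $\theta_\pm^2(e)=\theta_\pm(\pm ie)=\overline{(\pm i)}(\pm i)e=e$, and $\theta_\pm^2(1)=1$. Since $\theta_\pm^2$ is linear, $\theta_\pm^2=\id$.

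Next, $\Aut(\Cl_1)$ in $s\vect_\cc$ consists of unital, even, linear maps with $e\mapsto de$ and $(de)^2=1$, so $d=\pm1$. This gives $\Aut(\Cl_1)=\{\id,(-1)^F\}$. For $\varphi\in\Aut(\Cl_1)$ and a stellar structure $\theta$, the composite $\varphi\circ\theta$ is again antilinear, unital and satisfies \eqref{eq:theta}, because $\varphi$ is multiplicative and even. So $\Aut(\Cl_1)$ acts on the set of stellar structures by postcomposition. Finally, $(-1)^F\circ\theta_+=\theta_-$, so the action is transitive. A two-element group acting transitively on a two-element set acts freely, so the pair $\{\theta_+,\theta_-\}$ is a torsor under $\Aut(\Cl_1)$.
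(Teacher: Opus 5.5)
Your proof is correct and follows the paper's argument essentially step for step. Like the paper, you use the grading to write $\theta(e)=\lambda e$, apply \eqref{eq:theta} at $a=b=e$ to force $\lambda=\pm i$, compute $\theta^2=\id$ from $\overline{\lambda}\lambda=1$, and exchange $\theta_\pm$ using $(-1)^F$. The differences are minor: you verify the converse explicitly where the paper simply asserts it, and you act by postcomposition where the paper precomposes, which agree because $(-1)^F$ commutes with $\theta_\pm$. You also omit the paper's extra remark that $\lambda$ is invariant under conjugation by automorphisms, which the statement as written does not need.
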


\begin{proof}
 $\overline{\Cl_1}$ has the same underlying
set and product with conjugated scalars, so a $\cc$-algebra map out of it is an antilinear map. Note that $\theta$ is  grading preserving since it is a morphism in $s\vect_\cc$.
The odd part of $\Cl_1$ is one dimensional over $\cc$, so $\theta(e) = \lambda e$
determines $\theta$. We can apply \eqref{eq:theta} with $a = b = e$,
\[
1 = \theta(1) = \theta(e\cdot e) = (-1)^{1\cdot1}\theta(e)\theta(e) = -\lambda^2 e^2
= -\lambda^2,
\]
so $\lambda^2 = -1$ and $\lambda = \pm i$; conversely both satisfy \eqref{eq:theta}. In
particular $|\lambda| = 1$, so by antilinearity
$\theta^2(e) = \overline{\lambda}\theta(e) = |\lambda|^2 e = e$, and $\theta^2 = \id$ on
the even part since $\theta(1) = 1$.

A grading-preserving automorphism $\phi$ of $\Cl_1$ has $\phi(e) = \mu e$ with
$\mu^2 = 1$, so $\mu$ is real and $(\phi^{-1}\theta\phi)(e) =
\overline{\mu}\lambda\mu^{-1}e = \lambda e$; hence $\lambda$ is an isomorphism invariant.
Finally $(-1)^F(e) = -e$, so $\theta\circ(-1)^F$ sends $e\mapsto -\lambda e$, exchanging
$\theta_+$ and $\theta_-$.
\end{proof}

Notice that so far we have only talked about morphisms in $s\vect_\cc$, but the our theory lives in the Morita setting, so we need to talk about how these morphisms are actually coming into play.

\begin{notation}\label{not:twist}
Consider  $\varphi \colon A \ra B$ an isomorphism in $s\vect_\cc$, let
$M_{\varphi}$ denote the $(A,B)$-bimodule with underlying object $B$, left
action $a\cdot m := \varphi(a)m$ and right action the regular one. Each
$M_{\varphi}$ is invertible, with inverse $M_{\varphi^{-1}}$, and
\[
  M_{\psi\circ\varphi} \cong  M_{\varphi}\otimes_{B} M_{\psi}
\]
using $m\otimes m' \mapsto \psi(m)m'$. If $A = B$ this provides a morphism
$\Aut(A) \ra \Pic^{s}(A) : = \Aut_{\mathrm{Mor}_1}(A)$. An element in the image of this morphism is called an \emph{inner automorphism}.
\end{notation}

\begin{lemma}\label{lem:twistiso}
Let $\varphi,\psi : A \ra B$ be isomorphisms. Then $M_{\varphi} \cong M_{\psi}$ if and only if $\psi\varphi^{-1}$ is conjugation by an even unit of $B$. Thus the kernel of
$\Aut(B) \ra \Pic^{s}(B)$ consists of the automorphisms that are inner via an even unit.
\end{lemma}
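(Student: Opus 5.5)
We must show that $M_\varphi \cong M_\psi$ as $(A,B)$-bimodules if and only if $\psi\varphi^{-1} = c_u$ for some even unit $u\in B$, where $c_u(b)=ubu^{-1}$. The plan is to reduce to the case $A=B$, $\varphi=\id$, and then compute bimodule maps directly.

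\emph{Step 1 (reduction).} By Notation \ref{not:twist}, $M_\psi \cong M_{\varphi}\otimes_B M_{\psi\varphi^{-1}}$. Tensoring with the invertible bimodule $M_{\varphi^{-1}}$ on the left, we get
\[
M_\varphi\cong M_\psi \iff M_{\id_B}\cong M_{\psi\varphi^{-1}}
\]
as $(B,B)$-bimodules, using $M_{\varphi^{-1}}\otimes_A M_\varphi\cong M_{\id_B}$. So it suffices to show that for $\chi\in\Aut(B)$ we have $M_{\id}\cong M_\chi$ if and only if $\chi=c_u$ with $u$ an even unit. Morphisms in $\mathrm{Mor}_1$ are taken in $s\vect_\cc$, so all bimodule isomorphisms are grading preserving.

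\emph{Step 2 (the computation).} Let $F\colon B\to B$ be a $(B,B)$-bimodule isomorphism from $M_{\id}$ to $M_\chi$.
\begin{itemize}
\item Right $B$-linearity gives $F(b)=F(1)b$. Setting $u:=F(1)$, the element $u$ is even because $F$ preserves grading.
\item Bijectivity of $F$ makes $u$ a unit. Surjectivity gives some $b$ with $ub=1$, so $u$ has a right inverse. The same argument applied to $F^{-1}$, which is also right linear, gives $F^{-1}(b)=vb$ and $uv=vu=1$.
\item Left linearity reads $F(ab)=\chi(a)F(b)$. Taking $b=1$ gives $ua=\chi(a)u$, that is, $\chi(a)=uau^{-1}=c_u(a)$.
\end{itemize}
Conversely, given an even unit $u$ with $\chi=c_u$, the map $b\mapsto ub$ is a grading-preserving bimodule isomorphism $M_{\id}\to M_\chi$, since $u(ab)=(uau^{-1})(ub)$.

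Because $u$ is even, conjugation by $u$ agrees with the super-conjugation $a\mapsto (-1)^{|u||a|}uau^{-1}$, so there is no sign ambiguity.

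\emph{Step 3 (the kernel).} The statement about the kernel is the special case $A=B$, $\varphi=\id$, since the identity of $A$ in $\mathrm{Mor}_1$ is $M_{\id}$.

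The only delicate point is the bookkeeping in Step 1. One has to check that tensoring with $M_{\varphi^{-1}}$ sends $M_\varphi$ to $M_{\id}$ and $M_\psi$ to $M_{\psi\varphi^{-1}}$ with the composition order given in Notation \ref{not:twist}, and not with the order reversed. If the order came out the other way, the criterion would instead involve $\varphi^{-1}\psi$ being conjugation by an even unit of $A$. This condition is equivalent to the stated one: if $\varphi^{-1}\psi=c_w$ with $w\in A$, then $\psi\varphi^{-1}=c_{\varphi(w)}$, and $\varphi(w)$ is again an even unit. So the conclusion holds regardless of the order.
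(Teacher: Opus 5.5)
Your proof is correct, and Step 2 is exactly the paper's argument: right $B$-linearity forces the isomorphism to be left multiplication by the even element $u=F(1)$, and left linearity becomes the conjugation identity. The reduction in Step 1 is valid but unnecessary, because the same computation applied directly to $f\colon M_\varphi\to M_\psi$ gives $u\varphi(a)=\psi(a)u$, that is, $\psi\varphi^{-1}(b)=ubu^{-1}$ with $b=\varphi(a)$; your argument via $F^{-1}$ that $u$ is a unit fills in a step the paper only asserts.
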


\begin{proof}
An isomorphism $f :  M_{\varphi} \ra M_{\psi}$ is in particular an even bijection which is
right $B$-linear, thus $f(m) = cm$ for $c := f(1)$, necessarily an even unit.
Left linearity gives $c\varphi(a) = \psi(a)c$ for all $a \in A$. Writing
$b = \varphi(a)$ we get
\[
  \psi\varphi^{-1}(b)  =  cbc^{-1}  \ \ \ \ (b \in B).
\]
Conversely any such $c$ defines an isomorphism. 
$f$ is even.
\end{proof}

\begin{proposition}\label{prop:thetadistinct}
The structures $\theta_{\pm}$ of Proposition \ref{prop:clifford} are not
isomorphic as $1$-morphisms in $s\Alg_1(\cc)$. The same holds for $c_{\pm}$ of
Proposition \ref{prop:pin}.
\end{proposition}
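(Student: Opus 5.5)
The strategy is to turn each stellar structure $\theta_\pm$ into a $1$-morphism of $s\Alg_1(\cc)$ through the twisted-bimodule construction of Notation \ref{not:twist}. Read as an algebra isomorphism $\theta_\pm\colon \overline{\Cl_1}\to \Cl_1^{\sop}$, each one gives an invertible bimodule $M_{\theta_\pm}$. The question of whether $\theta_+$ and $\theta_-$ are isomorphic as $1$-morphisms then becomes the question of whether $M_{\theta_+}\cong M_{\theta_-}$ as bimodules. By the identification $\H(\Alg_1(\C))\simeq \mathrm{Mor}_1(\C)$, isomorphism of $1$-morphisms in the homotopy category is exactly isomorphism of bimodules.

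Apply Lemma \ref{lem:twistiso} with $\varphi=\theta_+$ and $\psi=\theta_-$. It says $M_{\theta_+}\cong M_{\theta_-}$ if and only if $\theta_-\theta_+^{-1}$ is conjugation by an even unit of $B=\Cl_1^{\sop}$. First compute the composite. Since $\theta_\pm(e)=\pm ie$, both maps are antilinear and $\theta_\pm$ fixes the even part $\cc\cdot 1$. Hence $\theta_-\theta_+^{-1}$ is $\cc$-linear, fixes $1$, and sends $ie\mapsto -ie$, so $e\mapsto -e$. This is exactly the grading automorphism $(-1)^F$, in agreement with the torsor statement of Proposition \ref{prop:clifford}.

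Next show that $(-1)^F$ is not inner via an even unit. The even part of $\Cl_1^{\sop}\cong\Cl_{-1}$ (Lemma \ref{lem:clsop}) is $\cc\cdot 1$, which is central. Conjugation by any even unit $c=\mu\cdot 1$ is therefore the identity, and it cannot send $e$ to $-e$. So the kernel of $\Aut(B)\to\Pic^s(B)$ is trivial, $(-1)^F$ maps to a nontrivial element of $\Pic^s$, and $M_{\theta_+}\not\cong M_{\theta_-}$.

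The main obstacle is bookkeeping: making sure the $1$-morphism attached to a stellar structure really is $M_\theta$ with the correct source and target. In particular, one has to apply Lemma \ref{lem:twistiso} to a pair of isomorphisms out of the \emph{same} algebra $\overline{\Cl_1}$, whose underlying super vector space is that of $\Cl_1$. The claim for $c_\pm$ of Proposition \ref{prop:pin} should follow by the same argument: compute $c_-c_+^{-1}$, identify it with $(-1)^F$ or a similar non-identity automorphism, and check that the even part of the target is central, so that no even unit implements that automorphism.
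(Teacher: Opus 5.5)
Your proposal is correct and is essentially the paper's own proof. You realize $\theta_\pm$ as the twisted bimodules $M_{\theta_\pm}$, compute $\theta_-\theta_+^{-1}=(-1)^F$, and apply Lemma \ref{lem:twistiso}: the even part $\cc$ of $\Cl_1^{\sop}$ is central, so no even unit implements $(-1)^F$. The $c_\pm$ case works the same way with $c_-c_+^{-1}=(-1)^F$ on $\Cl_1$, whose even part is again $\cc$. One small wording fix: the antilinear maps $\theta_\pm$ fix $1$ but conjugate the scalars in $\cc\cdot 1$ rather than fixing that line pointwise, and this is all you need for the composite to be linear and unital.
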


\begin{proof}
Both $\theta_{\pm}$ are isomorphisms $\overline{\Cl_1} \cong \Cl_1^{\sop}$, so they
determine $1$-morphisms $M_{\theta_{\pm}}$, and $\theta_-\theta_+^{-1} = (-1)^F$ by Proposition \ref{prop:clifford}. The even part of $\Cl_1^{\sop}$ is $\cc$, so every even unit is central and conjugation  by it is the identity and  $(-1)^F \neq \id$. By Lemma \ref{lem:twistiso}, $M_{\theta_+}$ cannot  be isomorphic to $ M_{\theta_-}$. The argument for $c_{\pm}$ is the same.
\end{proof}

\begin{remark}
    It is well known that $\Pic^s(\Cl_1) = \zz/2$, thus $\theta_{\pm}$ actually gives us all the automorphisms of $\Cl_1$.
\end{remark}

\begin{remark}\label{rmk:etacomponents}
Now we can see how $\theta$ sits inside the equivariant structure. By abuse of notation we use $\theta_{\pm}$ and the corresponding bimodules interchangeably. The edge $\eta$ coming from  Lemma \ref{lem:unwind} has a component at each object, and $\alpha\,\pt_\pm = \pt_\mp$ by Lemma \ref{lem:alphapt}(i), so
\[
\eta_{\pt_+} \colon Z(\pt_+) \lra \overline{Z(\pt_-)}, \qquad
\eta_{\pt_-} \colon Z(\pt_-) \lra \overline{Z(\pt_+)} .
\]
 The map $\eta_{\pt_+}$ is one of the $\theta$ from
Proposition \ref{prop:clifford} and $\eta_{\pt_-}$  is at first independent data. The degree-two simplex $y_{(g,g)}$ relates them  as it must trivialize the composite
$Z \Rightarrow \tau Z\alpha \Rightarrow \tau^2 Z\alpha^2 = Z$, whose component at $\pt_+$
is $\overline{\eta_{\pt_-}}\circ\theta$, so it forces
$\eta_{\pt_-} = \overline{\theta^{-1}}$, that is $\theta^2 = \id$. For $\Cl_1$ this is no
condition at all since by the proof of Proposition \ref{prop:clifford} this always holds.
\end{remark}

\begin{remark}\label{rmk:noinvariant}
Even though the theories are therefore non isomorphic they may still be indistinguishable.  $\theta$ is an
isomorphism  and  the theory $Z \circ \alpha \simeq \overline{Z}$ obtained by precomposition with $\alpha$ is
naturally isomorphic to conjugation of the theory, and
$\overline{\theta_{\pm}} = \theta_{\mp}$ so that the two structures have
complex-conjugate partition functions. On $\mathrm{Spin}_1$-manifolds these may
agree, in which case no invariant computed by $Z_{\Cl_1}$ separates them
even though they are not equivalent.
\end{remark}

\subsubsection*{The $\Pin^\pm$ case}

\begin{proposition}\label{prop:pin}
Let $H_1 = \Pin^{\pm}_1$ and let $Z \colon \cob_1(H_1) \ra s\Alg_1(\cc)$ be a
symmetric monoidal functor admitting a reflection structure, with
$A := Z(\pt)$. Then there is an equivalence
\[
\overline{A}  \simeq A,
\]
which is realized by an isomorphism of algebras, this isomorphism is an antilinear
automorphism $c$ of $A$ satisfying  moreover $c^2 = \id$ so that $A$ is the
complexification of the real form $A^{c}$. For $A = \Cl_1$ there are exactly two antilinear automorphisms, $c_{\pm}(e) = \pm e$,
both of which satisfy $c_\pm^2 = \id$,  The pair is again a 
torsor under $\{\id,(-1)^F\}$.
\end{proposition}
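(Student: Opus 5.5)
The argument runs parallel to the $\Spin_1$ case (Corollary \ref{cor:cliff} and Proposition \ref{prop:clifford}), with Lemma \ref{lem:alphapt}(ii) in place of (i). For $H_1=\Pin^\pm_1$, Example \ref{ex:spin1} says $\rho_1$ is onto $O(1)$, so there is a single point $\pt$ and $\alpha\,\pt\cong\pt$.

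Theorem \ref{t:main} applied to $Y=\pt$ gives an equivalence $\overline{Z(\pt)}=\tau(Z(\pt))\simeq Z(\alpha\,\pt)\simeq Z(\pt)$. This is a $1$-morphism $\eta_{\pt}\colon A\to\overline{A}$ in $s\Alg_1(\cc)$, an invertible bimodule, and it proves $\overline A\simeq A$.

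To realize $\eta_{\pt}$ by an algebra isomorphism, I would use that $\H(\Alg_1(\C))\simeq\mathrm{Mor}_1(\C)$. The goal is to show that the invertible bimodule is of the form $M_c$ of Notation \ref{not:twist} for some even isomorphism $c\colon A\to\overline A$, equivalently an antilinear automorphism of $A$. In general a Morita equivalence need not come from an isomorphism. For $A=\Cl_1$, however, one knows $\Pic^s(\Cl_1)=\zz/2$ is realized by $\{\id,(-1)^F\}$. So once a single isomorphism $\overline{\Cl_1}\cong\Cl_1$ exists (Lemma \ref{lem:clsop}), every equivalence $\Cl_1\to\overline{\Cl_1}$ is $M_c$ for some such $c$. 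For general $A$ I would state the realization as part of the hypothesis, or restrict to algebras with $\Aut(A)\to\Pic^s(A)$ surjective. This is the step I expect to be the main obstacle.

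The condition $c^2=\id$ comes from the degree-two simplex $y_{(g,g)}$ of Lemma \ref{lem:unwind}, following Remark \ref{rmk:etacomponents}. Since there is now only one object, the composite $Z\Rightarrow\tau Z\alpha\Rightarrow Z$ has component $\overline{c}\circ c$ at $\pt$, and its trivialization forces $M_{\overline c c}\cong M_{\id}$. By Lemma \ref{lem:twistiso}, $\overline{c}c$ is then conjugation by an even unit. For $\Cl_1$, whose even part is $\cc$ and hence central, this gives $\overline c c=\id$ on the nose. Here $\overline c c$ is the composite $c^2$ on the underlying set. In general one would replace $c$ by a correction by that unit, as in a standard descent argument, or again assume it. An antilinear involutive algebra automorphism has fixed real subalgebra $A^c$ with $A=A^c\otimes_\rr\cc$, by the standard eigenspace decomposition for the antilinear involution.

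For $A=\Cl_1$, an even antilinear map is determined by $c(e)=\lambda e$ with $c(1)=1$. The condition $c(e)^2=c(e^2)=1$ gives $\lambda^2=1$, so $\lambda=\pm1$. Both choices satisfy $c^2(e)=\overline\lambda\lambda e=e$. Composing with $(-1)^F$ exchanges them, and they are distinct because $(-1)^F$ is not conjugation by an even unit. This gives the torsor statement, exactly as in Proposition \ref{prop:clifford}.
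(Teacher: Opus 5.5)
Your route is the paper's. Lemma~\ref{lem:alphapt}(ii) and Theorem~\ref{t:main} give $\overline{A}\simeq Z(\alpha\,\pt)\simeq A$. Your $\Cl_1$ computation ($c(e)=\mu e$, $\mu^2=1$, $c^2(e)=|\mu|^2e$, $c_-=c_+\circ(-1)^F$) is exactly the paper's. For $A=\Cl_1$ your argument is complete, and it is more careful than the paper's, since you justify that the Morita equivalence is some $M_c$ using that $\Pic^s(\Cl_1)=\zz/2$ is the image of $\Aut(\Cl_1)$.

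The step you flag for general $A$ is a genuine gap, and the paper does not close it either. Its proof passes from an equivalence in $s\Alg_1(\cc)$ to an algebra isomorphism with the single word ``Unwinding'', and it verifies $c^2=\id$ only for $\Cl_1$. The step cannot be formal. Take a purely even $A=M_k(\cc)$:
\begin{itemize}
\item every automorphism is conjugation by an even unit, so by Lemma~\ref{lem:twistiso} the map $\Aut(A)\to\Pic^s(A)\cong\zz/2$ is trivial;
\item with $c$ entrywise conjugation, the bimodule $M_c\otimes_A\Pi A$ is an equivalence $\overline{A}\simeq A$ whose underlying object is purely odd;
\item hence it is not $M_\psi$ for any isomorphism $\psi$.
\end{itemize}
Theorem~\ref{t:main} alone does not tell you which kind of equivalence you get. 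So the general-$A$ clause needs a hypothesis like the one you propose, or should be read as a statement about $\Cl_1$ only.

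There is also a smaller slip in your $c^2=\id$ paragraph: ``only one object'' does not mean $\alpha\,\pt=\pt$. Let $u\colon\alpha\,\pt\to\pt$ be an isomorphism and set $c:=\tau Z(u)\circ\eta_{\pt}$. Naturality of $\eta$ along $u$ together with the $2$-simplex $y_{(g,g)}$ then gives $\overline{c}\circ c\simeq Z\bigl(u\circ\alpha(u)\bigr)$, not the identity. The class $u\circ\alpha(u)\in\pi_0\Aut(\pt)\cong K=\zz/2$ does not depend on $u$: replacing $u$ by $ku$ changes it by $k\cdot u\alpha(k)u^{-1}=k^2=1$, since $\zz/2$ has no nontrivial automorphisms. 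Neither your argument nor the paper's shows that $Z$ sends this class to the trivial element of $\Pic^s(A)$, and this is exactly where a quaternionic-type sign could enter.

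For $\Cl_1$ this is harmless, because $c^2=\id$ holds for every antilinear automorphism by the direct computation. Your remaining steps are fine: the real form $A^c$ via the $\pm1$ eigenspaces of $c$, and the torsor and distinctness statements via Lemma~\ref{lem:twistiso}.
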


\begin{proof}
By Lemma \ref{lem:alphapt}(ii), we have that
$\alpha\pt \cong \pt$, so Theorem \ref{t:main} reads
$\overline{A} \simeq Z(\alpha\,\pt) \simeq A$. Unwinding, an isomorphism
$\overline{A} \cong  A$ of algebras in $s\vect_\cc$ is an antilinear grading-preserving
bijection $c$ with $c(1) = 1$ and $c(ab) = c(a)c(b)$.

For $A = \Cl_1 = \cc\langle e\rangle/(e^2-1)$, the odd part is one dimensional over $\cc$, thus $c(e) = \mu e$ and dertermines
$c$. Since $e^2 = 1$ we have $\mu^2 = 1$, hence $\mu = \pm1$, and
antilinearity gives $c^2(e) = \overline{\mu}\mu e = |\mu|^2 e = e$, so  that $c^2 = \id$. One checks that $c_- = c_+\circ(-1)^F$.
\end{proof}

\begin{remark}\label{rmk:dichotomy}
The previous computations show that the equivariance data imposed by a reflection theory correspond to algebraic structure on the value of the algebra on the point. Succinctly we have the following
\begin{center}
\begin{tabular}{lll}
\hline
$H_1$ & $\alpha$ on objects & structures on $A$ \\
\hline
$\Spin_1$ & $\pt_{+}\leftrightarrow\pt_{-}$ & $\theta_{\pm}(e) = \pm ie$, stellar structure\\
$\Pin^{\pm}_1$ & $\pt \mapsto \pt$ & $c_{\pm}(e) = \pm e$, real forms \\
\hline
\end{tabular}
\end{center}

The table just records what the \emph{reflection structure} supplies, not which relation holds. For instance, we still need $A \simeq A^\sop$ since the point is self-dual in this case. 

By Lemma \ref{lem:alphapt}, what changes the behavior as seen above is wether $\rho_1(H_1)$ is $SO(1)$ or $O(1)$. In the first case $\alpha$ interchanges the two objects and Lemma \ref{lem:ptdual} converts the equivalence into one with $\Cl_1^{\sop}$, so the sign appears, and in the second case $\alpha$ fixes the object and no opposite algebra is involved. 
\end{remark}

\bibliographystyle{alpha}
\bibliography{References}

\end{document}